\theoremstyle{plain}
\theoremstyle{definition}
\newtheorem{theorem}{Theorem}[section]
\newtheorem{lemma}[theorem]{Lemma}
\newtheorem{proposition}[theorem]{Proposition}
\newtheorem{corollary}[theorem]{Corollary}
\newtheorem{claim}[theorem]{Claim}
\newtheorem{remark}[theorem]{Remark}
\newtheorem{example}[theorem]{Example}
\newtheorem{conjecture}[theorem]{Conjecture}
\def\Bbf{\mathbf{B}}
\def\Acal{\mathcal{A}}
\def\Fcal{\mathcal{F}}
\def\Pcal{\mathcal{P}}
\def\Cbb{\mathbb{C}}
\def\Rbb{\mathbb{R}}
\def\Zbb{\mathbb{Z}}
\def\ov{\overline}
\def\pr{\prime}
\def\ra{\rightarrow}
\def\setm{\setminus}
\def\wtil{\widetilde}
\DeclareMathOperator{\Asc}{Asc}  
\DeclareMathOperator{\Bic}{Bic}  
\DeclareMathOperator{\Cat}{Cat}  
\DeclareMathOperator{\codim}{codim} 
\DeclareMathOperator{\Con}{Con}  
\DeclareMathOperator{\con}{con}  
\DeclareMathOperator{\des}{des}  
\DeclareMathOperator{\Des}{Des}  
\DeclareMathOperator{\GT}{GT}  
\DeclareMathOperator{\JI}{JI}  
\DeclareMathOperator{\lk}{lk}  
\DeclareMathOperator{\MI}{MI}  
\DeclareMathOperator{\NC}{NC}  
\DeclareMathOperator{\NF}{NF}  
\DeclareMathOperator{\NN}{NN}  
\DeclareMathOperator{\rk}{rk}  
\DeclareMathOperator{\sdes}{sdes}  
\DeclareMathOperator{\SDes}{SDes}  
\DeclareMathOperator{\Seg}{Seg}  
\DeclareMathOperator{\stars}{star}  
\DeclareMathOperator{\SYT}{SYT}  
\DeclareMathOperator{\tr}{tr}
\DeclareMathOperator{\vol}{vol} 
\DeclareMathOperator{\Tw}{Tw} 
\begin{document}

\title{Enumerative properties of Grid-associahedra}
\author{Alexander Garver}
\address{Laboratoire de Combinatoire et d'Informatique Math\'ematique,
Universit\'e du Qu\'ebec \`a Montr\'eal}
\email{alexander.garver@lacim.ca}

\author{Thomas McConville}
\address{Department of Mathematics,
Massachusetts Institute of Technology}
\email{thomasmc@mit.edu}

\maketitle

\begin{abstract}

We continue the study of the nonkissing complex that was introduced by Petersen, Pylyavskyy, and Speyer and was studied lattice-theoretically by the second author. We introduce a theory of Grid-Catalan combinatorics, given the initial data of a nonkissing complex, and show how this theory parallels the well-known Coxeter-Catalan combinatorics. In particular, we present analogues of Chapoton's $F$-triangle, $H$-triangle, and $M$-triangle and give combinatorial, lattice-theoretic, and geometric interpretations of the objects defining these. In our Grid-Catalan setting, we prove that Chapoton's $F$-triangle and $H$-triangle identity holds, and we conjecture that Chapoton's $F$-triangle and $M$-triangle identity also holds. As an application, we obtain a bijection between the facets of the nonkissing complex and of the noncrossing complex, which provides a partial solution to an open problem of Santos, Stump, and Welker.

\end{abstract}

\section{Introduction}

Let $\lambda$ be a vertex-induced subgraph of the integer lattice $\Zbb^2$, and orient the vertices edges downward and the horizontal edges to the right. We refer to the graph $\lambda$ as a \emph{shape}. The cone of flows defined by $\lambda$ is the set of nonnegative edge-weightings $x:E(\lambda)\ra\Rbb_{\geq 0}$ such that for each interior vertex, the sum of the weights of incoming edges equals the sum of the weights of outgoing edges. The \emph{flow polytope} $\Pcal_{\lambda}$ is the subset of this cone such that the sum of the weights of edges incident to a source is $1$. The vertices of the flow polytope correspond to directed paths from a source to a sink in the graph $\lambda$. Consequently, triangulations of the polytope correspond to some pure simplicial complexes on paths.

Some unimodular triangulations of flow polytopes for any graph were constructed by Danilov, Karzanov, and Koshevoy \cite{koshevoy_karzanov_danilov:2012coherent} and Postnikov and Stanley \cite{stanley:2000flow}; see also \cite{meszaros_morales_striker:2015flow}. Two special cases of these families of triangulations were investigated by Petersen, Pylyavskyy, and Speyer \cite{petersen.pylyavskyy.speyer:noncrossing}, which they call the \emph{noncrossing complex} and the \emph{nonkissing complex}. They studied a subset of shapes whose flow polytopes are equivalent to polytopes of Gelfand-Tsetlin patterns, fillings of the square regions bounded by $\lambda$ satisfying some inequalities. The two triangulations of these polytopes induce monomial bases of some subalgebras of the Pl\"ucker algebra \cite[Corollary 4.3]{petersen.pylyavskyy.speyer:noncrossing}. Further algebraic and geometric properties of these complexes were studied by Santos, Stump, and Welker \cite{santos.stump.welker:noncrossing}.

The nonkissing complex is realized by a Gorenstein triangulation of the flow polytope, so it is isomorphic to the join of a simplex with a simplicial sphere. The \emph{reduced nonkissing complex} is obtained from the nonkissing complex by deleting its cone points. We construct a complete fan realization of this complex in Section~\ref{sec_fan}, which we call the \emph{Grid-associahedron fan}. We conjecture that it is the normal fan of a polytope, which we call a \emph{Grid-associahedron}.

A key feature of the Grid-associahedron fan is the presence of shards, codimension 1 cones supported by the ridges of the fan. The \emph{shard intersection order} $\Psi^f(\lambda)$ is the lattice of intersections of shards, which was originally introduced by Reading (\cite{reading:2011noncrossing}) for fans defined by simplicial hyperplane arrangements. We prove that the shard intersection order is a graded lattice whose rank generating polynomial is equal to the $h$-polynomial of the nonkissing complex. 

In fact, these structures satisfy a more refined enumerative relationship. Using the fan realization of the nonkissing complex, we define three polynomials in two variables each, called the \emph{$F$-triangle}, the \emph{$H$-triangle}, and the \emph{$M$-triangle}. The names of these polynomials were originally used by Chapoton who defined them for finite Coxeter groups (see \cite{chapoton:2004enumerative}, \cite{chapoton:2006nombre}). The names stand for \emph{face}, \emph{height}, and \emph{M\"obius}, respectively. The word triangle is used to indicate that the coefficients of these polynomials may be put into an $(n+1)\times(n+1)$-matrix in such way that all of the nonzero coefficients appear weakly below the main diagonal. Here $n$ denotes the number of interior vertices of $\lambda$.  The three triangles extend the $f$-polynomial and the $h$-polynomial of the Grid-associahedron fan along with the characteristic polynomial of the shard intersection order. 

One of our main results is an identity between the $F$-triangle and $H$-triangle. In addition, we conjecture an identity between the $F$-triangle and $M$-triangle; see Section~\ref{sec_enumeration}.
These identities are analogous to ones conjectured by Chapoton (\cite{chapoton:2004enumerative}, \cite{chapoton:2006nombre}) and proved by several authors. Indeed, the identities we consider recover Chapoton's in type $A$. For completeness, we recall Chapoton's conjectures in Section~\ref{sec_coxcat}. Our motivation for this work is to present an alternate setting for these enumerative relationships among Coxeter-Catalan objects. We hope to gain a better understanding of these identities by presenting them for a new class of objects.  For more evidence of a wider geometric context for these identities, see \cite{chapoton:2015stokes}.

Part of the beauty of Coxeter-Catalan combinatorics is the abundance of combinatorial structures and the bijections among them. In this spirit, we present three interpretations for each class of objects defining the $F$-triangle, $H$-triangle, and $M$-triangle: a combinatorial interpretation (Section~\ref{sec_nonkissing}), a lattice-theoretic interpretation (Section~\ref{sec_tamari}), and a geometric interpretation (Section~\ref{sec_fan}).

The rest of the paper is structured as follows. To clarify the analogy between Coxeter-Catalan and Grid-Catalan combinatorics, we provide some background on cluster complexes, nonnesting partitions, and noncrossing partitions associated to a Coxeter system in Section~\ref{sec_coxcat}. In Section~\ref{sec_nonkissing}, we introduce the nonkissing complex, the nonfriendly complex, and wide sets of segments, from which we define the $F$-triangle, $H$-triangle, and $M$-triangle, respectively. For some shapes $\lambda$, we provide an equivalent formulation for the $H$-triangle in terms of standard Young tableaux. In Section~\ref{sec_tamari}, we recall the Grid-Tamari order on the facets of the nonkissing complex. Using results from \cite{mcconville:2015lattice}, we give a lattice-theoretic proof that the facets of the nonkissing complex are in bijection with nonfriendly sets and with wide sets. These bijections also have a geometric interpretation, which is covered in Section~\ref{sec_fan}. In Section~\ref{sec_enumeration}, we prove an identity between the $F$-triangle and $H$-triangle and give the conjectured identity between the $F$-triangle and $M$-triangle, mirroring Chapoton's identities in the Coxeter setting.

\section{Coxeter-Catalan combinatorics}\label{sec_coxcat}

In this section, we briefly recall some combinatorial structures that arise in Coxeter-Catalan combinatorics. A thorough account on the development of this subject may be found in \cite[Chapter 1]{armstrong:2009generalized}.

Given a rank $r$ Coxeter system $(W,S)$, the facets of the cluster complex, nonnesting partitions, and noncrossing partitions are each enumerated by $W$-Catalan numbers,
$$\Cat(W)=\prod_{i=1}^r\frac{h+d_i}{d_i}$$
where $h$ is the Coxeter number and $d_1,\ldots,d_r$ are the degrees of the fundamental invariants in $\Cbb[x_1,\ldots,x_r]^W$. Each of these objects were originally defined and studied in type $A$ before being extended to other types. We define each of these objects in turn, and describe some additional enumerative relationships among them.

Let $W$ be a finite real reflection group with root system $\Phi$ and simple roots $\Pi$. A root is \emph{almost positive} if it is either positive or the negation of a simple root. The set $\Phi_{\geq -1}$ of almost positive roots is the ground set of a flag simplicial complex $\Delta(W)$ known as the \emph{(root) cluster complex}. The faces of $\Delta(W)$ are collections of pairwise compatible almost positive roots, as defined in \cite{fomin.zelevinsky:2003systems}. If $W$ is of type $A_{n-1}$, then the cluster complex is isomorphic to the boundary complex of the (dual) associahedron.

The cluster complex arises as a simplicial complex on cluster variables of a finite type cluster algebra. Cluster algebras were introduced by Fomin and Zelevinsky in the study of canonical bases and total positivity in Lie groups, but have since appeared in a wide variety of areas including quiver representations, Teichm\"uller theory, and discrete dynamical systems \cite{fomin.williams.zelevinsky:2016introduction}.

The $F$-triangle \cite{chapoton:2004enumerative} is the polynomial
$$F(x,y)=\sum_{F\in\Delta(W)}x^{|F\cap\Phi^+|}y^{|F\cap(-\Pi)|}.$$
The usual $f$-polynomial of the cluster complex is equal to $F(t,t)$.

For a crystallographic root system $\Phi$, the \emph{root poset} is defined as the poset $(\Phi^+,\leq)$ of positive roots where $\alpha\leq\beta$ if $\beta-\alpha$ is a nonnegative linear combination of simple roots. Postnikov defined the set $\NN(W)$ of \emph{nonnesting partitions} of $W$ to be the antichains of the root poset. Nonnesting partitions may be used to define the $H$-triangle \cite{chapoton:2006nombre},
$$H(x,y)=\sum_{A\in\NN(W)}x^{|A|}y^{|A\cap\Pi|}.$$

We remark that $H(t,1)$ is the usual $h$-polynomial of the cluster complex, which implies
$$H(t+1,1)=t^rF(1/t,1/t),$$
where $r$ is the rank of $W$. A finer relation is given in Equation~\ref{eqn_H_tri_id}.

Noncrossing partitions were introduced by Kreweras \cite{kreweras:partitions} as partitions of a finite subset of $\{1,\ldots,n\}$ arranged in clockwise order on a circle such that the convex hulls of any two blocks do not intersect. This was generalized to all types separately by Bessis \cite{bessis:2003dual} and Brady and Watt \cite{brady.watt:2002k} as follows.

A \emph{Coxeter element} $c$ is the product of each simple generator, taken in any order. To each root $\alpha$ in $\Phi$, we may associate a reflection that fixes a hyperplane and swaps $\alpha$ and $-\alpha$. For $w\in W$, we let $l_T(w)$ be the length of the shortest expression for $w$ as a product of reflections. Coxeter elements are maximal in the \emph{absolute order}, the poset on $W$ where $u\leq v$ if $l_T(u)+l_T(u^{-1}v)=l_T(v)$. The \emph{noncrossing partitions} $\NC(W,c)$ are all elements of $W$ in the interval $[1,c]$ in absolute order. To recover the original definition by Kreweras, we let $c$ be the long cycle $(12\ldots n)$, and replace an element $u\in[1,c]$ with the set of cycles that appear in the cycle decomposition of $u$.

The poset of noncrossing partitions is graded by the length function $l_T$. This allows one to define the $M$-triangle \cite{chapoton:2004enumerative} as the polynomial
$$M(x,y)=\sum_{u\leq v}\mu(u,v)x^{\rk(v)}y^{\rk(u)},$$
where $\mu(u,v)$ is the M\"obius function. 

The following identities were conjectured by Chapoton (\cite{chapoton:2004enumerative}, \cite{chapoton:2006nombre}).
\begin{align}\label{eqn_H_tri_id}
H(x+1,y+1) = x^rF\left(\frac{1}{x},\frac{1+(x+1)y}{x}\right)\\
M(-x,-y/x) = (1-y)^rF\left(\frac{x+y}{1-y},\frac{y}{1-y}\right)
\end{align}

Athanasiadis proved the $F=M$ identity in \cite{athanasiadis:2007some} by calculating the M\"obius function in terms of faces of the cluster complex and by identifying the $h$-polynomial of the cluster complex with the rank generating function of the noncrossing partition lattice. Thiel proved the $F=H$ identity in \cite{thiel:2014h} in a generalized form due to Armstrong \cite{armstrong:2009generalized} by comparing derivatives of each side and using the previously mentioned formula for the $h$-polynomial of the cluster complex.

\section{Grid-Catalan combinatorics}\label{sec_nonkissing}

Let $\lambda$ be a finite induced subgraph of the {$\mathbb{Z}^2$-}lattice. The vertices of $\lambda$ are pairs of integers where $(a,b)$ and $(c,d)$ are adjacent if either $a=c$ and $|b-d|=1$ or $b=d$ and $|a-c|=1$. We orient all of the vertical edges down and the horizontal edges to the right. In Figure~\ref{lambda_ex1}, we show an example a shape $\lambda.$

\begin{figure}
$$\includegraphics[scale=1.5]{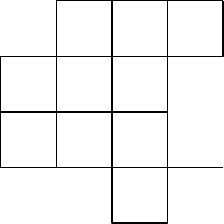}$$
\caption{{A shape $\lambda$.}}
\label{lambda_ex1}
\end{figure}

From the introduction, we recall that the cone of flows is the set of edge-weightings $x:E(\lambda)\ra\Rbb_{\geq 0}$ such that for each vertex $v$ that is neither a sink nor a source, the sum of the incoming weights is equal to the sum of the outgoing weights. The \emph{flow polytope} $\Pcal_{\lambda}$ is the set of edge-weightings in this cone such that the sum of the weights of all edges leaving a source is equal to $1$. For convenience, we define a new graph $\wtil{\lambda}$ from $\lambda$ by identifying all of the vertices with in-degree $\leq 1$ and identifying all of the vertices with out-degree $\leq 1$, and removing all loops that are formed. The flow polytope of $\wtil{\lambda}$ is easily seen to be isomorphic to the flow polytope of $\lambda$.

Label the vertices of $\wtil{\lambda}$ with the integers $1,2,\ldots,N+1$ so that if $\wtil{\lambda}$ has a directed edge $i\ra j$, then $i<j$. Then the unique source has label $1$ and the unique sink has label $N+1$. Let $\Phi=\{e_i-e_j:\ 1\leq i<j\leq N+1\}$ be a root system of type $A_N$. The \emph{Kostant partition function} $K(x)$ evaluated at an element $x$ of the root lattice is equal to the number of ways $x$ may be decomposed into a nonnegative integral sum of positive roots in $\Phi$. The edges of $\wtil{\lambda}$ correspond to a subset $\Phi(\wtil{\lambda})$ of the positive roots. The function $K_{\wtil{\lambda}}(x)$ is the number of ways to write $x$ as a nonnegative integral sum of roots in $\Phi(\wtil{\lambda})$. In unpublished work of Postnikov and Stanley (see e.g., \cite[Section 2.1]{meszaros:2015product}), it was shown that the (normalized) volume of the flow polytope is equal to an evaluation of the Kostant partition function
$$\vol(\Pcal_{\lambda})=K_{\wtil{\lambda}}(0,1,1,\ldots,1,-|V(\wtil{\lambda})|+2).$$

After establishing some basic notation in Section~\ref{subsec_comb_paths_segments}, we recall the \emph{nonkissing complex} from \cite{petersen.pylyavskyy.speyer:noncrossing} in Section~\ref{subsec_comb_nonkissing}. In \cite{petersen.pylyavskyy.speyer:noncrossing}, it is shown for certain shapes that this complex is a regular, unimodular, Gorenstein triangulation of the flow polytope $\Pcal_{\lambda}$, and it is not hard to extend their methods to any shape. 

A second regular, unimodular triangulation called the \emph{noncrossing complex} is more commonly studied in the literature. The facets of the noncrossing complex are in bijection with standard Young tableaux of shape $\lambda$. In Section~\ref{Sec_STY_lambda}, we characterize the facets of this complex for certain \emph{reflected skew shapes} $\lambda$ in terms descent sets of standard Young tableaux of shape $\lambda$. Using this description, we obtain a bijection between facets of the nonkissing complex and facets of the noncrossing complex when $\lambda$ is a rectangle, which provides a partial solution to a problem posed by Santos, Stump, and Welker \cite[Problem 2.22]{santos.stump.welker:noncrossing}. It is still an open question of Santos, Stump, and Welker to define the Grid-Tamari order directly on standard Young tableaux of rectangular shape $\lambda$ to obtain this bijection.

Lastly, we introduce \emph{nonfriendly collections} and \emph{wide sets of segments} in Section~\ref{subsec_comb_nonfriendly}, which we use to describe the (lattice) shard intersection order (see Theorem~\ref{thm_lattice_shard_intersection}).

\subsection{Paths and segments}\label{subsec_comb_paths_segments}

We will use the following terminology to describe points in $\lambda$. We say that $(a,b)$ is \emph{immediately South} (resp., \emph{East}) of $(c,d)$ if $a=c$ and $b=d-1$ (resp., $a=c+1$ and $b=d$). It is an \emph{interior vertex} if all of its four neighbors are in $\lambda$. Vertices in $\lambda$ not in the interior are called \emph{boundary vertices}. We let $V^o$ be the set of interior vertices and $V$ be the set of all vertices of $\lambda$.

A \emph{boundary path} is a sequence of vertices $(v_0,\ldots,v_l),\ l>0$ such that
\begin{itemize}
\item $v_0$ and $v_l$ are boundary vertices,
\item $v_i$ is an interior vertex for $0<i<l$, and
\item $v_i$ is immediately South or East of $v_{i-1}$ for $0<i\leq l$.
\end{itemize}
For the most part, we simply use the word \emph{path} to refer to a boundary path if it causes no confusion. For a path $(v_0,\ldots,v_l)$, we say $v_0$ is the \emph{initial} vertex and $v_l$ is the \emph{terminal} vertex. Using this orientation, we say the path \emph{enters} $v_i$ \emph{from the West} (resp., \emph{North}) if $v_{i-1}$ is immediately West (resp., North) of $v_i$. Similarly, the path \emph{leaves} $v_i$ \emph{to the East} (resp., \emph{South}) if $v_{i+1}$ is immediately East (resp., South) of $v_i$.

It will be useful to define the \emph{transposition} of a shape $\lambda$, denoted $\lambda^{\tr}$. The shape $\lambda^{\tr}$ has vertices of the form $(j, -i)$ where $(i,j)$ is a vertex of $\lambda$ and two vertices of $\lambda^{\tr}$ are connected by an edge if and only if the corresponding vertices of $\lambda$ are connected by an edge.

We frequently consider the special case where $\lambda$ is a rectangular shape. The $k\times(n-k)$ \emph{rectangle} is the shape $\lambda$ with vertex set $\{(i,j):\ 0\leq i\leq n-k,\ 0\leq j\leq k\}$. Boundary paths in $\lambda$ may be extended to paths from $(0,k)$ to $(n-k,0)$, which we may identify with the $k$-element subsets of $[n]:=\{1,\ldots,n\}$.

A \emph{segment} is a sequence of interior vertices $(v_0,\ldots,v_l),\ l\geq 0$ such that $v_i$ is immediately South or East of $v_{i-1}$ for all $i$. We refer to this kind of path as a segment since it may be extended to a boundary path. We say a segment is \emph{lazy} if it only contains one vertex. Two segments $s = (v_0, \ldots, v_l), t = (w_0, \ldots, w_m)$ may be \emph{concatenated} to obtain a new segment $s\circ t := (v_0, \ldots, v_l, w_0, \ldots, w_m)$ if the first vertex of $t$ is immediately South or East of the last vertex of $s$. We let $\Seg(\lambda)$ be the set of all segments supported by $\lambda$. Transposition clearly defines a bijection $\Seg(\lambda) \to \Seg(\lambda^{\tr})$.

If $\lambda$ is a $2\times n$ rectangle, then the segments of $\lambda$ are in natural bijection with the positive roots of a type $A_{n-1}$ root system. Two segments may be concatenated if and only if the sum of their corresponding positive roots is a positive root.

Given segments $t\subseteq s$, we say that $t$ is a \emph{SW-subsegment} of $s$ if
\begin{itemize}
\item $s$ either starts with $t$ or enters $t$ from the North, and
\item $s$ either ends with $t$ or leaves $t$ to the East.
\end{itemize}
A \emph{NE-subsegment} $t\subseteq s$ is defined in the same way, except that $s$ enters $t$ from the West and leaves to the South. Let $A_s$ and $K_s$ be the set of SW-subsegments and NE-subsegments of $s$, respectively. We note that $s$ is the unique common element of $A_s$ and $K_s$. In a similar manner, we define $A_p$ (resp., $K_p$) for a boundary path $p$ to be the set of segments contained in $p$ through which $p$ enters from the North (resp., West) and leaves to the East (resp., South). We observe that $p$ is not in $A_p$ or $K_p$ since boundary paths are not segments.

In the proof of Theorem~\ref{thm_FH}, we treat some of the interior vertices of $\lambda$ as boundary vertices. To simplify the presentation, we typically assume that the boundary vertices are those that have degree at most $3$, but none of our results rely on the assumption that all of the vertices of degree $4$ are considered as interior vertices.

\subsection{The nonkissing complex}\label{subsec_comb_nonkissing}

Boundary paths $p$ and $q$ are \emph{kissing along a common segment} $s$ if
\begin{itemize}
\item $p$ enters $s$ from the West while $q$ enters from the North, and
\item $p$ leaves $s$ to the South while $q$ leaves to the East.
\end{itemize}
We remark that two paths may kiss along several disjoint segments. If they do not kiss along any segment, we say that $p$ and $q$ are \emph{nonkissing}. At times, we will also need to consider when two segments $s, t \in \text{Seg}(\lambda)$ are kissing along a segment. We define this in the same way that we do for boundary paths.

A \emph{simplicial complex} $\Delta$ is a collection of subsets of a given set such that $F\in\Delta$ and $G\subseteq F$ implies $G\in\Delta$. A \emph{facet} is a maximal face of $\Delta$. A complex is \emph{pure} if all of its facets have the same size. If $\Delta$ is pure, then the codimension 1 faces are called \emph{ridges}.

The \emph{nonkissing complex} $\Delta^{NK}(\lambda)$ is the simplicial complex on boundary paths supported by $\lambda$ whose faces consist of pairwise nonkissing paths \cite{mcconville:2015lattice}. Many interesting properties of this triangulation may be derived from the following result.

\begin{theorem}[\cite{petersen.pylyavskyy.speyer:noncrossing}]\label{thm_nonkissing_poly}
The nonkissing complex is a regular, unimodular, Gorenstein triangulation of the flow polytope $\Pcal_{\lambda}$.
\end{theorem}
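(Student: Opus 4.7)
The plan is to recognize the nonkissing complex as a special case of the Danilov--Karzanov--Koshevoy (DKK) framework for constructing regular unimodular triangulations of flow polytopes, and then to verify the Gorenstein property separately. The DKK construction takes as input an acyclic directed graph together with a linear order on the in-edges and out-edges at each interior vertex, and produces a simplicial complex of pairwise ``coherent'' source-to-sink paths whose geometric realization is a regular unimodular triangulation of the associated flow polytope.

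Applied to $\wtil{\lambda}$, I would use the natural framing in which at every interior vertex the horizontal (West) in-edge precedes the vertical (North) in-edge, and the horizontal (East) out-edge precedes the vertical (South) out-edge. A short case analysis at each shared interior vertex $v$ shows that two distinct paths $p,q$ fail the DKK coherence condition at $v$ exactly in the configuration where they enter and leave a common segment through $v$ in the kissing pattern: $p$ entering from the West and leaving to the South while $q$ enters from the North and leaves to the East, or the symmetric configuration at the terminal vertex of a kissed segment. Consequently, the DKK complex coincides with $\Delta^{NK}(\lambda)$; regularity then follows from the explicit height function on paths built into the DKK construction, and unimodularity from the fact that the characteristic flow-vectors of a coherent family of paths form a basis of the flow lattice with determinant $\pm 1$.

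For the Gorenstein property, I would first identify the cone points of $\Delta^{NK}(\lambda)$ as precisely those boundary paths which do not kiss any other path and therefore lie in every facet. The remaining task is to show that the link of the subcomplex spanned by these cone points is a simplicial sphere, which amounts to the Gorenstein criterion for the flow polytope of the ``reduced'' graph obtained by contracting the cone-point paths. For this I would exhibit a lattice point in the relative interior of a suitable dilate of $\Pcal_\lambda$---the canonical ``symmetric'' flow coming from a uniform edge-weighting corrected on the boundary---and argue that each facet of $\Delta^{NK}(\lambda)$ expresses this point as a convex combination of its vertices with equal coefficients, forcing the symmetric $h$-vector characteristic of a Gorenstein triangulation.

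The main obstacle is the Gorenstein verification. PPS handle this for shapes coming from skew Young diagrams by exploiting an explicit connection to Gelfand--Tsetlin polytopes and the Pl\"ucker algebra, where the symmetric interior flow has a natural interpretation. For a general induced subgraph $\lambda \subset \Zbb^2$ one must reprove the existence of the required interior flow combinatorially on $\wtil{\lambda}$; however, since the DKK machinery handles regularity and unimodularity uniformly, and the Gorenstein check is ultimately local at each interior vertex, the extension of PPS's argument amounts to a bookkeeping verification rather than a new idea.
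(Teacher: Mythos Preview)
The paper does not supply a proof of this theorem at all: it attributes the result to Petersen--Pylyavskyy--Speyer and adds only the one-sentence remark that ``their arguments may be extended to any shape $\lambda$ in a straight-forward manner.'' So there is no in-paper argument to compare against, and your outline is already more detailed than anything the authors provide.

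That said, your approach is the right one and is essentially what underlies the cited result. The introduction explicitly identifies the nonkissing complex as a special case of the Danilov--Karzanov--Koshevoy triangulations, so framing $\wtil{\lambda}$ as you describe and checking that DKK coherence coincides with the nonkissing condition is exactly how regularity and unimodularity are obtained. Your identification of the cone points with the horizontal and vertical paths matches the paper's own description of the reduced complex. The Gorenstein step is, as you correctly flag, the part that is not automatic from DKK; your plan of exhibiting an interior lattice flow in a dilate of $\Pcal_\lambda$ is the standard route, but the actual construction of that flow for a general induced subgraph $\lambda$ (rather than a skew shape) is precisely the ``straight-forward extension'' the paper declines to write out. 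Nothing in your sketch is wrong, but the Gorenstein paragraph remains a promissory note rather than an argument.
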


Petersen, Pylyavskyy, and Speyer proved Theorem~\ref{thm_nonkissing_poly} for a restricted class of shapes, but their arguments may be extended to any shape $\lambda$ in a straight-forward manner.

If $p$ only takes East steps or only takes South steps, we say it is a \emph{horizontal} or \emph{vertical} path, respectively. Horizontal and vertical paths are nonkissing with every other path, so they are cone points in the nonkissing complex. The \emph{reduced nonkissing complex} $\wtil{\Delta}^{NK}(\lambda)$ is the subcomplex of $\Delta^{NK}(\lambda)$ with all horizontal and vertical paths removed. The reduced nonkissing complex is pure of dimension $|V^o|-1$. Furthermore, it is \emph{thin}, which means that every ridge is contained in exactly two facets.

A path $p$ is an \emph{initial path} if it turns at a unique vertex $v$ and $p$ enters $v$ from the West and leaves to the South. The set of all initial paths supported by $\lambda$ is a facet of $\wtil{\Delta}^{NK}(\lambda)$, which we denote by $F_0$. We define the \emph{$F$-triangle} for the reduced nonkissing complex to be the polynomial
$$F(x,y)=\sum_{F\in\wtil{\Delta}^{NK}(\lambda)}x^{|F\setm F_0|}y^{|F\cap F_0|}.$$

\begin{example}
{In Figure~\ref{2x3_ex1}, we show the reduced nonkissing complex where $\lambda$ is a $2\times 3$ rectangle. Two paths are connected by an edge if and only if they lie in a common face. For this complex, we have the following $F$-triangle
$$F(x, y)= 1 + 3x + 2y + 2x^2 + 2xy + y^2.$$}
\end{example}

\begin{figure}
$$\includegraphics[scale=.75]{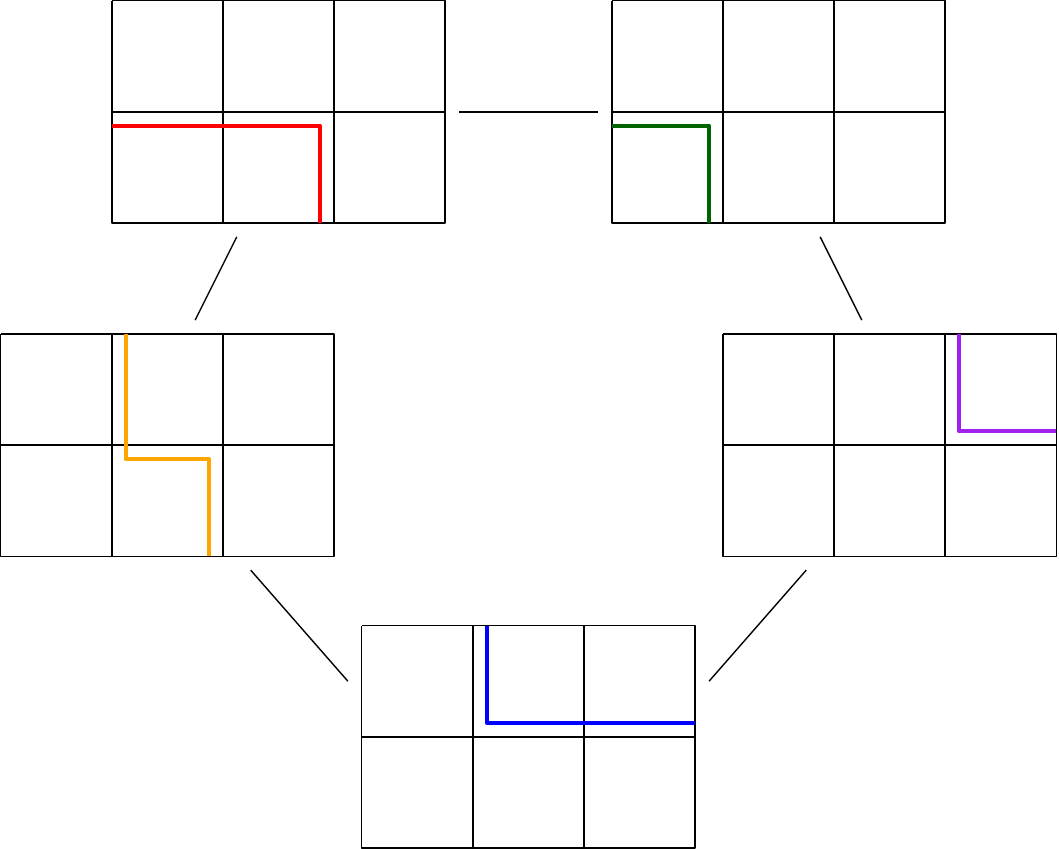}$$
\caption{{The complex $\wtil{\Delta}^{NK}(\lambda)$ where $\lambda$ is a $2\times 3$ rectangle.}}
\label{2x3_ex1}
\end{figure}

\subsection{Standard Young tableaux}\label{Sec_STY_lambda}

A \emph{cell} at $(i,j)$, denoted $\lambda_{(i,j)}$, is the shape whose vertices are $(i,j),\ (i+1,j),\ (i,j+1),$ and $(i+1,j+1)$. A cell at $(i,j)$ is \emph{contained} in $\lambda$ if the vertex-induced subgraph of $\lambda$ on vertices $(i,j), \ (i+1, j), \ (i,j+1)$, and $(i+1,j+1)$ is $\lambda_{(i,j)}$. We say that $\lambda$ is a \emph{reflected skew shape} if the following hold: 
\begin{itemize}
\item if $(i,j)$ and $(i+1,j-1)$ are vertices of $\lambda$, then $(i,j-1)$ and $(i+1,j)$ are vertices of $\lambda$,
\item for any two cells $\lambda_{(a,b)},\ \lambda_{(c,d)}$ contained $\lambda$ there exists two sequences
$$\lambda_{(a_0,b_0)},\ \lambda_{(a_1,b_1)},\ \ldots,\ \lambda_{(a_l,b_l)} \ \ \text{and} \ \  \lambda_{(c_0,d_0)},\ \lambda_{(c_1,d_1)},\ \ldots,\ \lambda_{(c_l,d_l)}$$
consisting of cells contained in $\lambda$ with $(a_0,b_0)=(c_0,d_0),\ (a_l,b_l)=(c_l,d_l),\ (a,b)=(a_i,b_i),\ (c,d)=(c_j,d_j)$ for some $i,j$, and 
\item for any $k  \in \{0, \ldots, l\}$, the cells $\lambda_{(a_k,b_k)}$ and $\lambda_{(c_k,d_k)}$ in these sequences are immediately North or immediately East of $\lambda_{(a_{k-1},b_{k-1})}$ and $\lambda_{(c_{k-1},d_{k-1})}$, respectively.
\end{itemize}

Given a reflected skew shape $\lambda$, let $Q_{\lambda}$ be the poset of cells contained in $\lambda$ where $\lambda_{(a,b)}\leq \lambda_{(c,d)}$ if $a\leq c$ and $b\leq d$.

\begin{proposition}
For a reflected skew shape $\lambda$, the volume of the flow polytope $\Pcal_{\lambda}$ is equal to the number of linear extensions of $Q_{\lambda}$.
\end{proposition}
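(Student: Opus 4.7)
The plan is to chain together three facts: a unimodular triangulation of $\Pcal_\lambda$ converts a volume calculation into a count of facets; the facets of a certain such triangulation are indexed by standard Young tableaux of shape $\lambda$; and for a reflected skew shape, such tableaux are literally the same data as linear extensions of $Q_\lambda$. Concretely, I would invoke the noncrossing complex $\Delta^{NC}(\lambda)$, which (like the nonkissing complex of Theorem~\ref{thm_nonkissing_poly}) is a regular, unimodular triangulation of the flow polytope. Unimodularity immediately gives $\vol(\Pcal_\lambda)$ equal to the number of facets of $\Delta^{NC}(\lambda)$.

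Next I would appeal to the Petersen--Pylyavskyy--Speyer bijection between facets of $\Delta^{NC}(\lambda)$ and $\SYT(\lambda)$: a maximal set of pairwise noncrossing boundary paths may be encoded by recording, at each interior vertex, the relative order in which the paths pass through it, and this encoding yields a filling of the cells contained in $\lambda$ that increases along both coordinate directions. For the restricted family of shapes treated in \cite{petersen.pylyavskyy.speyer:noncrossing} this is done explicitly, and the extension to a reflected skew shape is routine because the defining axioms (no antidiagonal ``L'' configuration, and connectivity of the cell set by N/E-adjacent cell sequences) guarantee that $\lambda$ behaves locally and globally like a skew Young diagram. Finally, a $\SYT$ of shape $\lambda$ is by definition a bijection from the cells contained in $\lambda$ to $\{1,\ldots,|Q_\lambda|\}$ that strictly increases as $(a,b)$ increases in either coordinate, which is exactly the data of a linear extension of $Q_\lambda$. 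Stringing these together yields $\vol(\Pcal_\lambda) = \#\SYT(\lambda) = \#\{\text{linear extensions of } Q_\lambda\}$.

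The main obstacle, such as it is, sits in the middle step: one must check that the PPS facet-to-tableau bijection goes through verbatim for every reflected skew shape, not just the subclass they considered. The two axioms in the definition of a reflected skew shape are precisely tailored to rule out the pathological cell configurations that would otherwise obstruct the inductive/recursive construction of the bijection, so once those axioms are in hand the argument is essentially bookkeeping; the paper already signals (in the paragraph following Theorem~\ref{thm_nonkissing_poly}) that the PPS constructions extend to arbitrary shapes in a straightforward manner, and the same remark applies here. An alternative route, avoiding any explicit bijection, would be to compute directly with the Postnikov--Stanley formula $\vol(\Pcal_\lambda) = K_{\wtil{\lambda}}(0,1,\ldots,1,-|V(\wtil\lambda)|+2)$ and recognize the right-hand side as the number of linear extensions of $Q_\lambda$ via the Lindstr\"om--Gessel--Viennot lemma applied to lattice paths in $\wtil\lambda$; this is a viable backup if one prefers not to invoke the noncrossing triangulation at all.
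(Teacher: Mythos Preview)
The paper states this proposition without proof; it is treated as a direct consequence of the facts already recalled in the introduction to Section~\ref{sec_nonkissing}, namely that the noncrossing complex is a regular unimodular triangulation of $\Pcal_\lambda$ and that its facets are in bijection with $\SYT(\lambda)$. Your argument simply makes this implicit chain explicit (unimodular triangulation $\Rightarrow$ volume counts facets $\Rightarrow$ facets are $\SYT(\lambda)$ $\Rightarrow$ $\SYT(\lambda)$ are linear extensions of $Q_\lambda$), so it is correct and is precisely the intended justification.
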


A linear extension of $Q_{\lambda}$ may be viewed as a filling of the cells of $\lambda$ by the integers $1,\ldots,N$ where each integer appears in a unique cell, and the labels increase to the East and North. Such a filling is called a \emph{standard Young tableau} of shape $\lambda$\footnote{We are using the French conventions for fillings of $\lambda$, which is why he have included the word ``reflected'' in the term reflected skew shape.}. For a standard Young tableau $T$ and $k\in[N]$, we let $T_{\leq k}$ be the restriction of $T$ to the cells with label at most $k$. We view $T_{\leq k}$ as a standard Young tableau whose shape is the union of these cells.

Let $\lambda$ be a reflected skew shape with $N$ cells. We say that $i\in[N-1]$ is a \emph{descent} of a tableau $T$ if $i+1$ is in a cell strictly Northwest of the cell containing $i$. Note that this differs slightly from the usual definition of descent. Typically, one defines $i$ to be a descent if $i+1$ is weakly Northwest of $i$.

Suppose $T$ has a descent at $i$, and let $\lambda_i, \lambda_{i+1}$ be the cells containing $i$ and $i+1$, respectively. To this descent we associate the unique segment $s$ on the boundary of $T_{\leq i+1}$ whose endpoints are the NW-corner of $\lambda_i$ and the SE-corner of $\lambda_{i+1}$. Let $\Des(T)$ be the set of segments corresponding to descents of $T$. A descent at $i$ is \emph{simple} if its associated segment $s$ is lazy and if $s$ is not properly contained in any other descent. Let $\SDes(T)$ be the set of segments associated to simple descents of $T$, and let $\des(T)=|\Des(T)|$ and $\sdes(T)=|\SDes(T)|$. In the next section, we characterize descent sets of tableaux of reflected skew shape.

Let $H^\pr$ be the polynomial
$$H^\pr(x,y)=\sum_{T\in\SYT(\lambda)}x^{\des(T)}y^{\sdes(T)}.$$

\begin{example}
In Figure~\ref{fig_2x3_tableaux}, we show the standard Young tableaux of shape $\lambda$, the $2\times 3$ rectangle. We indicate the descents of these tableaux by drawing the corresponding segments in red. Thus, we have
$$H^\pr(x,y) = 1 + x + 2xy + x^2y^2.$$
More generally, if $\lambda$ is a $2\times n$ rectangle, the segments supported by $\lambda$ correspond to positive roots in a type $A_{n-1}$ root system. The set of segments that may appear as a descent set of some tableau of shape $\lambda$ are precisely the sets of positive roots that may appear in a type $A_{n-1}$ nonnesting partition.
\end{example}

\begin{figure}
$$\includegraphics[scale=1.5]{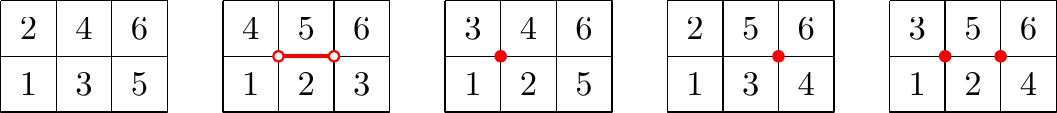}$$
\caption{{The $2\times 3$ standard Young tableau and their descents.}}
\label{fig_2x3_tableaux}
\end{figure}

\begin{example}
{Suppose $\lambda$ is a $3\times 3$ rectangle. In Figure~\ref{fig_3x3_tableau}, we show a standard Young tableau of shape $\lambda$ that gives rise to the term $x^3y$ in $H^\prime(x,y)$.}
\end{example}

\begin{figure}
$$\includegraphics[scale=1.5]{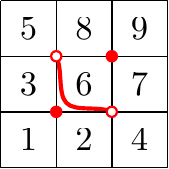}$$
\caption{{A $3\times 3$ standard Young tableau and its descents.}}
\label{fig_3x3_tableau}
\end{figure}

\subsection{The nonfriendly complex}\label{subsec_comb_nonfriendly}

We say two segments $s,t$ are \emph{friendly} along a common subsegment $u$ if both of the following conditions hold:
\begin{itemize}
\item The segment $s$ either starts at $u$ or enters $u$ from the West, and $s$ either ends at $u$ or leaves $u$ to the South.
\item The segment $t$ either starts at $u$ or enters $u$ from the North, and $t$ either ends at $u$ or leaves $u$ to the East.
\end{itemize}

Using the notation from Section~\ref{subsec_comb_paths_segments}, this means $u\in A_s$ and $u\in K_t$ both hold. Two segments are called \emph{nonfriendly} if they are not friendly along any segment. The \emph{nonfriendly complex} $\Gamma^{NF}(\lambda)$ is the simplicial complex on segments supported by $\lambda$ whose faces consist of pairwise nonfriendly segments.

A lazy segment $s$ in a set $S$ of segments is \emph{isolated} if it is not contained in any other member of $S$. Let $\epsilon(S)$ be the set of isolated lazy segments in $S$. We define the \emph{H-triangle} to be the polynomial
$$H(x,y)=\sum_{F\in\Gamma^{NF}(\lambda)}x^{|F|}y^{|\epsilon(F)|}.$$

For the remainder of this section, we fix a reflected skew shape $\lambda$ and give a bijective proof of the following identity.

\begin{theorem}\label{thm_nonnesting_nonfriendly}
When $\lambda$ is a reflected skew shape, the identity $H(x,y) = H^{\pr}(x,y)$ holds.
\end{theorem}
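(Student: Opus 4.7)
The plan is to establish that $\Phi\colon \SYT(\lambda)\to\Gamma^{NF}(\lambda)$, defined by $T\mapsto\Des(T)$, is a bijection. Under $\Phi$, the statistic $\des(T)=|\Des(T)|$ corresponds tautologically to $|F|$, and $\sdes(T)$ corresponds to $|\epsilon(F)|$ by the definitions: a descent at $i$ is simple precisely when its associated segment is lazy and not properly contained in any other segment of $\Des(T)$, which is exactly the condition for that segment to be an isolated lazy segment of $\Des(T)$. The content of the theorem therefore reduces to showing that $\Phi$ is well-defined and a bijection.

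First, I would show that any two descent segments of the same tableau are nonfriendly. For descents $i<j$, the segments $s_i$ and $s_j$ lie on the boundaries of the nested subshapes $T_{\leq i+1}\subseteq T_{\leq j+1}$, with endpoints pinned to the NW- and SE-corners of specific cells of $T$. Assuming $s_i$ and $s_j$ are friendly along a common subsegment $u$, I would unpack the four entry/exit conditions of the friendly relation along $u$ and combine them with the description of $s_i,s_j$ as boundary arcs of nested reflected skew subshapes to force some cell to lie simultaneously inside $T_{\leq i+1}$ and in $T_{\leq j+1}\setminus T_{\leq i+1}$, a contradiction.

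For bijectivity, I would produce an explicit inverse $\Psi\colon\Gamma^{NF}(\lambda)\to\SYT(\lambda)$ that constructs a tableau from a nonfriendly set $F$ by growing $\emptyset = \mu_0\subsetneq\mu_1\subsetneq\cdots\subsetneq\mu_N = \lambda$ one cell at a time in an order prescribed by $F$. Each segment of $F$ identifies a pair of cells meant to receive consecutive labels at a descent, and between descents, the remaining cells are placed in the order forced by linearly extending $Q_\lambda$ subject to the constraints that no unprescribed descent is created and that the next prescribed descent can eventually be realized. The main obstacle, and the heart of the proof, is verifying that this growth procedure always succeeds and produces a unique tableau: at every step a legitimate addable cell must exist, and no two segments of $F$ may impose conflicting demands. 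The nonfriendly hypothesis on $F$ is precisely what forbids the boundary-interaction patterns that would cause two prescribed moves to conflict, while the reflected skew shape hypothesis on $\lambda$ guarantees that the cells needed at each prescribed descent are addable at that moment. Once $\Psi$ is shown to be well-defined, $\Phi\circ\Psi = \id$ holds by construction and $\Psi\circ\Phi = \id$ follows by comparing the growth procedure applied to $\Des(T)$ with the actual growth sequence of $T$, completing the bijection.
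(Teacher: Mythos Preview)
Your proposal rests on a claim that is false: two descent segments of the same tableau are \emph{not} nonfriendly in general. In fact the paper proves the opposite in Lemma~\ref{lem_nonnesting_descents}: if $s,t\in\Des(T)$ share a maximal common subsegment $u$, then $s$ and $t$ are \emph{friendly} along $u$. So the map $T\mapsto\Des(T)$ does not land in $\Gamma^{NF}(\lambda)$, and your proposed bijection $\Phi$ is not even well-defined. A small example already shows this: for a $3\times 3$ tableau one can have two descents whose segments overlap in a way that makes them friendly (see Figure~\ref{fig_3x3_tableau} and the twist example in Figure~\ref{twist_fig}).

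The paper's actual route is a two-step bijection. First, Lemmas~\ref{lem_nonnesting_descent_unique} and~\ref{lem_nonnesting_descents} show that $T\mapsto\Des(T)$ is a bijection from $\SYT(\lambda)$ onto the collection of segment sets satisfying the ``friendly along every maximal common subsegment, with distinct initial and terminal vertices'' condition. Second, and this is the step you are missing entirely, the paper introduces a \emph{twist map} $\Tw$ (Proposition~\ref{prop_twist_map}): both descent sets and nonfriendly sets encode the same data of a \emph{balanced edge-weighting} on $\lambda$ together with starter/closer markings at interior vertices, and $\Tw$ is the bijection obtained by passing through this common encoding. The twist map preserves the number of segments and the number of isolated lazy segments, which is what matches $(\des,\sdes)$ with $(|F|,|\epsilon(F)|)$. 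Your inverse construction $\Psi$ is essentially the content of the second half of Lemma~\ref{lem_nonnesting_descents}, but it recovers tableaux from descent sets, not from nonfriendly sets; without the twist, the argument does not reach $\Gamma^{NF}(\lambda)$.
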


Before proving this theorem, we characterize the descent sets of tableaux.

\begin{lemma}\label{lem_nonnesting_descent_unique}
If $T$ and $T^{\pr}$ are tableaux of shape $\lambda$ with $\text{Des}(T) = \text{Des}(T^\pr)$, then $T=T^{\pr}$.
\end{lemma}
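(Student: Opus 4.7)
The plan is to prove the contrapositive: if $T \neq T^{\pr}$ are SYT of shape $\lambda$, then $\Des(T) \neq \Des(T^{\pr})$. Let $i_0$ be the smallest index at which $\lambda_{i_0}^T \neq \lambda_{i_0}^{T^{\pr}}$, and set $c := \lambda_{i_0}^T$ and $c^{\pr} := \lambda_{i_0}^{T^{\pr}}$. Both $c$ and $c^{\pr}$ are addable cells of the common sub-shape $\Lambda := T_{\leq i_0 - 1} = T^{\pr}_{\leq i_0 - 1}$; two distinct addable cells of an order ideal in $Q_\lambda$ must be incomparable (otherwise the smaller one would be required in $\Lambda$ before the larger could be added), so one of $c, c^{\pr}$ is strictly NW of the other. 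Assume without loss of generality that $c^{\pr}$ is strictly NW of $c$.

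The cleanest case is $i_0 \geq 2$ with $c$ strictly NW of $c^- := \lambda_{i_0 - 1}^T = \lambda_{i_0 - 1}^{T^{\pr}}$, so that $i_0 - 1$ is a descent of $T$ with segment $s_T$ from the SE-corner of $c$ to the NW-corner of $c^-$. Because strictly-NW is transitive, $c^{\pr}$ is also strictly NW of $c^-$, so $i_0 - 1$ is a descent of $T^{\pr}$ as well, with segment $s_{T^{\pr}}$ sharing the SE-endpoint of $s_T$ but having a different NW-endpoint (the distinct SE-corners of $c$ and $c^{\pr}$); thus $s_T \neq s_{T^{\pr}}$. The SE-endpoint of any descent segment of $T^{\pr}$ is the NW-corner of its SE-cell, which uniquely determines that cell, and $c^-$ has a single label in $T^{\pr}$, so $s_{T^{\pr}}$ is the only element of $\Des(T^{\pr})$ whose SE-endpoint is the NW-corner of $c^-$. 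Consequently $s_T \in \Des(T) \setminus \Des(T^{\pr})$, and we are done. The symmetric case in which $c^{\pr}$ but not $c$ is strictly NW of $c^-$ is handled identically after swapping the roles of $T$ and $T^{\pr}$, yielding $s_{T^{\pr}} \in \Des(T^{\pr}) \setminus \Des(T)$.

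The main obstacle is the remaining case, in which either $i_0 = 1$ or neither $c$ nor $c^{\pr}$ is strictly NW of $c^-$. Here the descent data at index $i_0 - 1$ does not distinguish $T$ from $T^{\pr}$, and I would need to look further ahead. The guiding intuition is that placing the strictly-NW cell $c^{\pr}$ at label $i_0$ in $T^{\pr}$ forces $c$ to receive a strictly later label in $T^{\pr}$ than in $T$, so the ``NW shift'' between the two labelings must eventually surface as a descent segment incident to $c$ or $c^{\pr}$ that appears in exactly one of $\Des(T), \Des(T^{\pr})$. To make this rigorous I would use a secondary induction, for instance on the pair $(N, N - i_0)$ ordered lexicographically, finding the first index $j \geq i_0$ at which a cell neighboring $c$ or $c^{\pr}$ becomes the SE or NW cell of a descent in one tableau only. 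This is the step where the reflected-skew-shape hypothesis enters essentially: it guarantees that the poset $Q_\lambda$ is locally a product of chains with no holes around $c$ and $c^{\pr}$, so the divergence between $T$ and $T^{\pr}$ cannot be indefinitely hidden and must produce a detectable mismatch in the descent segments.
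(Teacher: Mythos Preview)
Your argument is incomplete: you correctly dispose of the case where $i_0-1$ is already a descent of $T$ or of $T^{\pr}$, but you explicitly leave the remaining case (including $i_0=1$) as a sketch.  That remaining case is precisely where the content of the lemma lies, and the ``secondary induction'' you describe is not an argument---it does not identify which later index produces the distinguishing segment, nor why such an index must exist.

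The paper's proof avoids this difficulty by looking \emph{forward} from the first divergence rather than backward.  With $k=i_0$ and (in your notation) $c=\lambda_{(i,j)}$, $c^{\pr}=\lambda_{(i^{\pr},j^{\pr})}$ where $c^{\pr}$ is strictly NW of $c$, one takes $k^{\pr}\geq k$ minimal such that the cell of $k^{\pr}+1$ in $T$ is weakly NW of $\lambda_{(i^{\pr},j^{\pr})}$.  Because $\lambda_{(i^{\pr},j^{\pr})}$ is addable to $T_{\leq k-1}$ but receives a label $>k^{\pr}$ in $T$, every cell labelled $k,\ldots,k^{\pr}$ in $T$ is strictly SE of $\lambda_{(i^{\pr},j^{\pr})}$; hence $k^{\pr}$ is a descent of $T$, and its segment $s$ necessarily passes through the SE-corner $(i^{\pr}+1,j^{\pr})$ of $\lambda_{(i^{\pr},j^{\pr})}$.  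One then argues directly that $s$ cannot lie in $\Des(T^{\pr})$ by examining the label in $T^{\pr}$ of the cell whose NW-corner is the terminal vertex of $s$.  This single forward step replaces your unfinished induction and works uniformly, including when $i_0=1$.
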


\begin{proof}
Suppose $T$ and $T^{\pr}$ are distinct tableaux with the same descent set, and let $k$ be maximal such that $T_{\leq k-1}=T^{\pr}_{\leq k-1}$. Let $\lambda_{(i,j)}$ and $\lambda_{(i^{\pr},j^{\pr})}$ be the cells containing $k$ in $T$ and $T^{\pr}$, respectively. Without loss of generality, we may assume that $i>i^{\pr}$ and $j<j^{\pr}$ since ${\lambda}_{(i,j)}$ and ${\lambda}_{(i^{\pr},j^{\pr})}$ must be incomparable elements of $Q_{\lambda}$. Let $k^{\pr}$ be minimal such that $k\leq k^{\pr}\leq N$ and $k^{\pr}+1$ occupies a cell weakly Northwest of ${\lambda}_{(i^{\pr},j^{\pr})}$ in $T$. Then there is a descent $s$ in $T$ at $k^{\pr}$ that contains the SE-corner of $\lambda_{(i^{\pr},j^{\pr})}$, which is $(i^\pr+1,j^\pr)$. Since $\text{Des}(T) = \text{Des}(T^\pr)$, $s$ must also be a descent of $T^{\pr}$. Let $(i_1,j_1)$ be the terminal vertex of $s$, and let $k^{\pr\pr}$ be the entry in $T^{\pr}$ at $\lambda_{(i_1,j_1-1)}$. Clearly $k^{\pr\pr}\leq k-1$ is impossible since $T_{\leq k-1}=T^{\pr}_{\leq k-1}$. On the other hand, if $k^{\pr\pr}\geq k+1$, then the segment $s$ cannot start from $(i^\pr+1,j^\pr)$, so it must contain $(i^\pr+1,j^\pr+1)$. This contradicts the minimality assumption on $k^{\pr}$.
\end{proof}

\begin{lemma}\label{lem_nonnesting_descents}
A set of segments $D$ is the descent set of some tableau $T$ if and only if for distinct segments $s,t$ in $D$:
\begin{itemize}
\item $s$ and $t$ do not have the same initial vertex or the same terminal vertex, and 
\item if $u$ is a maximal common subsegment of $s$ and $t$, then $s$ and $t$ are friendly along $u$.
\end{itemize}
Furthermore, no two tableaux of shape $\lambda$ have the same descent set.
\end{lemma}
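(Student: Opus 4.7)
The plan is as follows. The ``furthermore'' clause is immediate from Lemma~\ref{lem_nonnesting_descent_unique}, so I focus on the characterization, establishing necessity and sufficiency separately.

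For \emph{necessity}, suppose $D = \Des(T)$. By definition, each descent $s \in D$ at position $p$ has initial vertex equal to the SE-corner of $\lambda_{p+1}$ and terminal vertex equal to the NW-corner of $\lambda_p$. Since every cell of $\lambda$ has a unique SE-corner and a unique NW-corner, distinct descents automatically have distinct initial and distinct terminal vertices. For the friendliness condition, I consider two descents $s, t \in D$ at positions $p < q$ with maximal common subsegment $u = (v_0, \ldots, v_k)$. The key geometric fact is a \emph{separation property}: along any edge of $s$, the cell on the NE side (the left-hand side while traversing $s$ in the S/E direction) carries a label at least $p+1$, while the cell on the SW side carries a label at most $p$; the analogous statement holds for $t$ with threshold $q$. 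Maximality of $u$ forces $s$ and $t$ to approach $v_0$ from distinct directions (one from the West, the other from the North) and to leave $v_k$ in distinct directions (one to the East, the other to the South), with appropriate modifications when a segment begins or ends at an endpoint of $u$. A direct case analysis at each endpoint---comparing the label constraints imposed by $s$ and $t$ on the four surrounding cells and using $p < q$---rules out all configurations except $u \in K_s$ and $u \in A_t$, which is exactly the friendliness condition.

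For \emph{sufficiency}, given $D$ satisfying the two conditions, I construct a tableau $T$ with $\Des(T) = D$ by induction on the number $N$ of cells of $\lambda$. The base case $N \leq 1$ is trivial. For the inductive step, I identify a corner cell $c$ of $\lambda$ (one with no cell of $\lambda$ immediately N or E of it) to receive the label $N$. By the distinct-initial-vertex condition, at most one segment of $D$ has its initial vertex at the SE-corner of any given corner cell. If some segment $s \in D$ has initial vertex at the SE-corner of a corner cell $c$, I select such a $c$ (making $s$ the descent at $N-1$), remove $c$ from $\lambda$ and $s$ from $D$, and recurse on $\lambda' = \lambda \setminus \{c\}$ with $D' = D \setminus \{s\}$; otherwise, I pick any corner cell $c$ with no associated segment and recurse with $D' = D$. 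Setting $T(c) = N$ extends the inductively-constructed $T'$ on $\lambda'$ to the desired $T$.

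The main obstacle is justifying the sufficiency step. When several corner cells each have associated segments in $D$, we must select the one corresponding to the ``last'' descent of the intended tableau, and I expect the friendliness condition, combined with an appropriate partial order on segments in $D$, to single out this cell unambiguously. A further subtlety is that removing $c$ transforms some of its corner vertices from interior to boundary vertices, potentially invalidating other segments in $D'$ that pass through these vertices; the conditions on $D$ should force the only affected segment to be $s$ itself, so that $D'$ remains a valid candidate descent set on $\lambda'$. Making these claims precise and verifying that $\lambda'$ is still a reflected skew shape is the main technical challenge.
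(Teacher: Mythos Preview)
Your necessity argument and the appeal to Lemma~\ref{lem_nonnesting_descent_unique} for uniqueness are correct and parallel the paper's reasoning.

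For sufficiency, your top-down induction (peel off a corner cell carrying label $N$, recurse on $\lambda'$) is genuinely different from the paper's approach, which builds $T$ bottom-up: at each step $k$ it selects, among the minimal cells of $Q_\lambda\setminus T_{\le k-1}$, the one with smallest first coordinate satisfying a compatibility condition with $D$, and then verifies directly that the resulting filling has descent set $D$. The paper never changes the shape.

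Your approach has a structural gap beyond the difficulties you flag. For a reflected skew shape, the second defining condition forces $Q_\lambda$ to have a \emph{unique} maximal cell $c$, so the ``which corner cell'' question does not arise at the outset. Moreover, the SE-corner of this $c$ is necessarily a boundary vertex (were it interior, the first defining condition would force the cell East of $c$ to lie in $\lambda$), so no segment of $D$ can start there and your ``remove $s$'' branch never fires in the first step. The real difficulty is that $\lambda' = \lambda\setminus\{c\}$ is typically \emph{not} a reflected skew shape: already for the $2\times 2$ rectangle, deleting the top-right cell yields the three-cell staircase, where the vertices $(1,2)$ and $(2,1)$ are present but $(2,2)$ is not, violating the first defining condition. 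Thus your induction hypothesis, stated only for reflected skew shapes, does not apply to $\lambda'$. The multiple-corner-cell scenario you anticipate, and the need to remove some $s\in D$, arise only \emph{after} this first step---precisely when the shape has left the reflected-skew class.

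To make the approach work you would have to prove the characterization for a class of shapes closed under removing maximal cells, then supply the rule for choosing among several maximal cells, and finally check that $N-1$ in the recursively obtained $T'$ lands in the cell whose NW-corner is the terminal vertex of the removed segment. The paper's bottom-up construction avoids all of this by working inside the fixed shape $\lambda$ throughout.
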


\begin{proof}
Assume that $D$ is the descent set of a tableau $T$. Suppose $i,j$ are distinct descents of $T$, and let $s$ and $t$ be the segments associated to $i$ and $j$, respectively. Since $i\neq j$, the segments $s$ and $t$ do not have the same initial vertex or the same terminal vertex.

Suppose $s$ and $t$ share at least one vertex, and let $u$ be a maximal common subsegment of each. Then $u$ is on the boundary of $T_{\leq i}$ and $T_{\leq j}$. Without loss of generality, suppose $i<j$. Let $k,l\in[N]$ such that the initial vertex of $u$ is the SE-corner of the cell containing $l$, and the terminal vertex of $u$ is the NW-corner of the cell containing $k$. Since $u$ is a maximal common subsegment of $s$ and $t$, we have $i+1\leq l\leq j+1$ and $i\leq k\leq j$. If $i+1<l$ then $s$ enters $u$ from the West, and if $i+1=l$, then $s$ starts at $u$. Similarly, if $l<j+1$ then $t$ enters $u$ from the North, and if $l=j+1$, then $t$ starts at $u$. On the other end, $s$ either ends at $u$ or leaves $u$ to the South, whereas $t$ either ends at $u$ or leaves $u$ to the East. Hence, $s$ and $t$ are friendly along $u$.

Now assume $D$ is a set of segments such that for distinct $s,t\in D$: $s$ and $t$ do not have the same initial vertex or the same terminal vertex, and if $u$ is a maximal common subsegment of $s$ and $t$, then $s$ and $t$ are friendly along $u$. We construct a tableau $T$ with descent set $D$ as follows.

Fix $k\in[N]$ and suppose $T_{\leq k-1}$ has already been determined. Let $I$ be the set of cells in $T_{\leq k-1}$. Then $I$ is an order ideal of $Q_{\lambda}$. Among the minimal elements of $Q_{\lambda}\setm I$, choose $\lambda_{(i,j)}$ where $i$ is minimal such that:
\begin{itemize}
\item if $s\in D$ such that $s$ contains the SE-corner of $\lambda_{(i,j)}$ but does not contain the NE-corner of $\lambda_{(i,j)}$, then $I$ contains the cell whose NW-corner is the terminal vertex of $s$.
\end{itemize}
We then construct $T_{\leq k}$ by putting $k$ in the cell $\lambda_{(i,j)}$.

We first show that this construction produces a tableau. For a given $k\in[N]$, assume that $T_{\leq k-1}$ is a standard Young tableau. Among the minimal elements of $Q_{\lambda}\setm I$, let $\lambda_{(i,j)}$ be the cell with $i$ maximal. If $s$ is a segment containing the SE-corner of $\lambda_{(i,j)}$, then the cell $\lambda_{(i^{\pr},j^{\pr})}$ whose NW-corner is the terminal vertex of $s$ satisfies $i<i^{\pr}$ and $j>j^{\pr}$. If $\lambda_{(i^{\pr},j^{\pr})}$ is not in $I$, there exists some minimal $\lambda_{(i^{\pr\pr},j^{\pr\pr})}\leq \lambda_{(i^{\pr},j^{\pr})}$ in $Q_{\lambda}\setm I$. Since $j^{\pr\pr}<j$ and $\lambda_{(i,j)}$ is incomparable with $\lambda_{(i^{\pr\pr},j^{\pr\pr})}$, we have $i<i^{\pr\pr}$. This is a contradiction. In particular, the construction of $T_{\leq k}$ is well-defined.

We next show that $D$ is the descent set of $T$. For a given $k\in[N]$, assume that the descents of $T_{\leq k-1}$ are precisely the segments in $D$ connecting cells in $T_{\leq k-1}$. We assume further that any segment in $D$ containing an interior vertex of $T_{\leq k-1}$ is a descent of $T_{\leq k-1}$. Let $\lambda_{(i,j)}$ be the cell in $T$ containing $k$, and let $\lambda_{(i^{\pr},j^{\pr})}$ be the cell containing $k+1$.

If $T$ has a descent at $k$, then $i^{\pr}<i$. In this case, $\lambda_{(i^{\pr},j^{\pr})}$ must have been a minimal element of $Q_{\lambda}\setm I$. Since it was not chosen as the location of $k$, the SE-corner of $\lambda_{(i^{\pr},j^{\pr})}$ must support a segment $s$ in $D$ such that $s$ does not contain the NE-corner of $\lambda_{(i^{\pr},j^{\pr})}$. Since $\lambda_{(i^{\pr},j^{\pr})}$ could be filled with $k+1$, the NW-corner of $\lambda_{(i,j)}$ must be the terminal vertex of $s$, and $s$ must lie along the boundary of $T_{\leq k}$. Hence, $s$ is the segment corresponding to the descent at $k$ in $T$.

Now assume that $D$ contains a segment $s$ whose terminal vertex is the NW-corner of $\lambda_{(i,j)}$. If $s$ does not lie along the boundary of $T_{\leq k}$, let $\lambda_{(i_1,j_1)}$ be the cell with $j_1$ minimal such that $(i_1,j_1)$ is not in $T_{\leq k}$ and $s$ contains both the SE-corner and NE-corner of $\lambda_{(i_1,j_1)}$. Let $\lambda_{(i^{\pr\pr},j^{\pr\pr})}$ be the minimal cell of $Q_{\lambda}\setm I$ such that $j^{\pr\pr}=j_1$, which exists since $\lambda_{(i_1,j_1-1)}$ is in $I$. Since $\lambda_{(i^{\pr\pr},j^{\pr\pr})}$ is not the cell containing $k$, there exists a segment $t$ that contains the SE-corner of $\lambda_{(i^{\pr\pr},j^{\pr\pr})}$ but not the NE-corner, and the cell $\lambda_{(i_2,j_2)}$ whose NW-corner is the terminal vertex of $t$ is not in $T_{\leq k-1}$. By the induction hypothesis, $t$ does not contain any interior vertex of $T_{\leq k-1}$. Hence, $s$ and $t$ meet at the SE-corner of $\lambda_{(i_1,j_1)}$. Call this vertex $v$. Let $u$ be the maximal common subsegment of $s$ and $t$ containing $v$. Then $s$ and $t$ must kiss along $u$. If $t$ ends at $u$, then $s$ must continue to the East after $u$. Since $s$ lies along the boundary of $T_{\leq k-1}$, this means that $(i_2,j_2)$ is already in $T_{\leq k-1}$, a contradiction. Hence, $t$ leaves $u$ to the South. Since $t$ does not contain an interior vertex of $T_{\leq k-1}$, we must have that $s$ ends at $u$. Since, in addition, $\lambda_{(i_2,j_2)}$ is not less than $\lambda_{(i,j)}$ in $Q_{\lambda}$, $t$ must contain both the SW-corner of $\lambda_{(i,j)}$ and the SE-corner of $\lambda_{(i,j)}$. However, by the construction, this would require $\lambda_{(i_2,j_2)}$ to be filled before $\lambda_{(i,j)}$, a contradiction. Therefore, $s$ does lie along the boundary of $T_{\leq k}$. By a similar kissing argument, one can show that $\lambda_{(i^{\pr},j^{\pr})}$ is the cell whose SE-corner is the initial vertex of $s$. In particular, $T$ contains every descent in $D$.\end{proof}

We define a bijection between descent sets of tableaux and nonfriendly sets of segments as follows. Each interior vertex of $\lambda$ may be marked in one of four ways: as a starter, as a closer, as both a starter and a closer, or as neither. In addition, at each edge $e$ between two interior vertices, we place a nonnegative integer weight $\omega_e$. All other edges are given weight $0$. The edge-weighting together with the vertex-marking is \emph{balanced} if at every interior vertex $v$ with incoming edges $e_1,e_2$ and outgoing edges $e_1^{\pr},e_2^{\pr}$, we have

$$\omega_{e_1}+\omega_{e_2}-\omega_{e_1^{\pr}}-\omega_{e_2^{\pr}}=\begin{cases}1 &\mbox{ if }v\mbox{ is a closer but not a starter}\\-1 &\mbox{ if }v\mbox{ is a starter but not a closer}\\0 &\mbox{ otherwise}\end{cases}.$$

Let $X=\Des(T)$ for some tableau $T$. For each edge $e$, let $\omega_e$ be the number of segments in $X$ containing $e$. The initial and terminal vertex of each segment in $X$ are marked as a starter and a closer, respectively. In particular, if $s\in X$ is a lazy segment, its vertex is marked as both a starter and a closer. It is easy to see that this edge-weighting is balanced. Conversely, if $\omega$ is any balanced weighting, then by the characterization in Lemma~\ref{lem_nonnesting_descents}, there is a unique descent set $X$ associated to $\omega$.

On the other hand, if $X$ is a nonfriendly collection, we may again consider the edge-weighting $\omega$ where $\omega_e$ is the number of segments in $X$ containing $e$. The vertices are again marked as starters or closers if they are initial or terminal vertices of a segment in $X$. Once again, this edge-weighting is balanced, and every balanced edge-weighting arises uniquely in this manner. Hence, there is a bijection from descent sets of tableaux to nonfriendly sets that factors through balanced edge-weightings, which we call the \emph{twist map} $\Tw$.

\begin{example}
In Figure~\ref{twist_fig}, we show the twist map $\Tw$ applied to the descent set of a tableau. We show the weights on each edge connecting two interior vertices in blue. All other edge-weights are zero.
\end{example}

\begin{figure}
$$\includegraphics[scale=1.5]{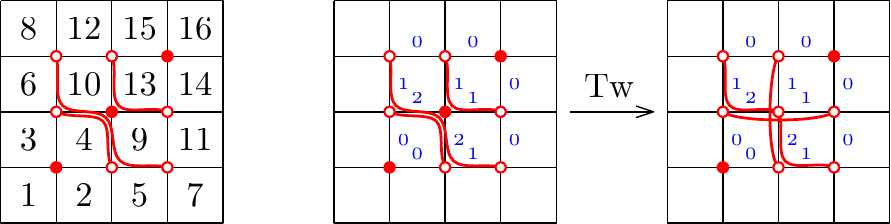}$$
\caption{{The twist map.}}
\label{twist_fig}
\end{figure}

Let $X$ be a descent set. As we do not change the number of starters or closers under $\Tw$, we have $|X|=|\Tw(X)|$. Furthermore, if $s\in X$ is an isolated, lazy segment, then $s$ will be an isolated lazy segment in $\Tw(X)$ as well. We have established the following proposition.

\begin{proposition}\label{prop_twist_map}
The twist map is a bijection between descent sets of tableaux of shape $\lambda$ and nonfriendly sets of segments. Moreover, if $X$ is the descent set of a tableau, then $X$ and $\Tw(X)$ have the same number of segments and the same number of isolated lazy segments.
\end{proposition}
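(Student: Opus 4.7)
The plan is to leverage the fact, already established in the discussion preceding the statement, that the twist map factors as
\[
\Des(\SYT(\lambda))\ \longleftrightarrow\ \{\text{balanced weighted markings}\}\ \longleftrightarrow\ \{\text{nonfriendly sets}\},
\]
so that bijectivity of $\Tw$ reduces to verifying that each of these two correspondences is a bijection and then checking that both statistics in question descend to the balanced-marking side.

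First, I would confirm the left correspondence. Given $X=\Des(T)$, the marking--weighting $(\omega,\text{starters},\text{closers})$ defined in the paper is readily seen to be balanced, by summing contributions of each $s\in X$ through a fixed vertex $v$. Conversely, given a balanced marking--weighting, one reconstructs a unique candidate descent set by iteratively peeling off segments: start at any starter, follow any edge of positive weight out of it, decrement that weight, and continue until arriving at a closer; the balance condition guarantees this procedure always terminates and exhausts all weight and all starter/closer markings uniquely, yielding a set $X$ of segments whose initial vertices are precisely the starters, whose terminal vertices are precisely the closers, and such that common subsegments of distinct $s,t\in X$ must be friendly along any maximal overlap (else the balance would fail). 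By Lemma~\ref{lem_nonnesting_descents}, the resulting $X$ is the descent set of a unique tableau, and this gives a two-sided inverse.

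Next, I would argue the right correspondence. Starting from a nonfriendly collection $Y$, the weighting $\omega_e=\#\{s\in Y:e\in s\}$ with initial/terminal markings is again balanced by the same local argument at each vertex. For the inverse, the routing algorithm above applied to a balanced marking--weighting must in fact produce a \emph{nonfriendly} set, for if two segments in the output were friendly along a common subsegment $u$, then at the East endpoint of $u$ one segment would leave East and the other South, forcing the routing to pass through a vertex with mismatched contributions — impossible by balance and by the uniqueness of the peeling. The main obstacle in the whole proposal is precisely this step: one must show that the local balance condition together with the nonfriendly routing rule give a unique decomposition, since the edge weights alone do not dictate which incoming weight gets paired with which outgoing weight without the nonfriendly constraint. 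I would handle this by an induction on the total weight, peeling off one maximal segment at a time and verifying that, at each vertex, the unique consistent pairing of incoming with outgoing edges is the one that avoids creating a friendly pair.

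Finally, I would verify the statistic preservation. The number of segments in $X$ equals the number of vertices marked as starters (equivalently, as closers) in the associated balanced marking, and likewise for $\Tw(X)$; hence $|X|=|\Tw(X)|$. A lazy segment $\{v\}$ in $X$ corresponds to a vertex marked as both starter and closer with no weight contribution to any incident edge; it is \emph{isolated} exactly when no other segment of $X$ passes through $v$, i.e.\ when all four edges at $v$ have weight $0$ in $\omega$. Since this condition is a property of the balanced marking--weighting alone, it is preserved under the bijection, so $|\epsilon(X)|=|\epsilon(\Tw(X))|$, completing the proof.
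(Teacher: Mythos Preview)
Your proposal takes essentially the same approach as the paper: both factor $\Tw$ through balanced marked weightings, argue that descent sets and nonfriendly sets each biject with these, and read off the two statistics from the marking data. The paper is in fact terser than your write-up---it simply asserts that every balanced weighting arises uniquely from a descent set (invoking Lemma~\ref{lem_nonnesting_descents}) and uniquely from a nonfriendly set, and handles $|X|$ and $|\epsilon(X)|$ in one sentence each.

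One point worth tightening: your peeling procedure ``follow any edge of positive weight'' is not well-defined, and balance alone does not force a unique decomposition---different edge choices can yield different segment sets. What actually pins down uniqueness on each side is a deterministic local pairing rule at every vertex (on the descent side, strands that diverge must do so in the ``friendly'' pattern of Lemma~\ref{lem_nonnesting_descents}; on the nonfriendly side they must diverge the opposite way), and it is precisely the difference between these two local rules that constitutes the ``twist.'' You effectively acknowledge this when you flag the nonfriendly reconstruction as the main obstacle. Your treatment of isolated lazy segments---as the condition ``$v$ is marked both starter and closer and all four incident edge weights vanish,'' which is visibly a property of the balanced marking alone---is correct and is exactly what the paper intends.
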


Theorem~\ref{thm_nonnesting_nonfriendly} is an immediate consequence of Proposition~\ref{prop_twist_map} and Lemma~\ref{lem_nonnesting_descents}.

\subsection{Wide sets of segments}\label{subsec_transitive}

We say a subset $T$ of $\Seg(\lambda)$ is \emph{wide} if all of the following conditions hold:
\begin{itemize}
\item For all triples $s,t,u$ with $s\circ t=u$, we have $|T\cap\{s,t,u\}|\neq 2$.
\item If $s,t\in T$ such that $s$ and $t$ are friendly along $u$, then $u\in T$.
\item If $s=s_1\circ s_2\circ s_3$ where $s,s_2\in T$ and either $s_2\in A_s$ or $s_2\in K_s$, then $s_1,s_3\in T$.
\end{itemize}

A collection $T$ of segments is \emph{closed} if whenever $s,t\in T$ and $s\circ t$ is well-defined then $s\circ t\in T$. If $T$ is any set of segments, its \emph{closure} $\ov{T}$ is the smallest closed set containing $T$. Equivalently, the closure $\ov{T}$ is the set of all possible concatenations of segments in $T$. 

Now let $T$ be any wide set. We let $\NF(T)$ be the set of $s\in T$ such that $A_s\cap T=\{s\}=K_s\cap T$. Observe that the first property of wide sets shows that $T$ is closed. This implies that $\overline{\text{NF}(T)} \subset T.$

\begin{proposition}\label{prop_nonfriendly_transitive}
If $X$ is a nonfriendly set of segments, then $\ov{X}$ is a wide set such that $X=\NF(\ov{X})$. Conversely, if $T$ is a wide set, then $\NF(T)$ is nonfriendly and $T=\ov{\NF(T)}$. Consequently, nonfriendly sets of segments are in bijection with wide sets of segments.
\end{proposition}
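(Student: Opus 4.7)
The plan is to prove the bijection between nonfriendly sets and wide sets in both directions, leveraging the equivalence that two segments $s$ and $t$ are friendly along $u$ precisely when $u \in K_s$ and $u \in A_t$ (up to swapping the two roles).

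First, suppose $T$ is wide. For nonfriendliness of $\NF(T)$: if distinct $s, s' \in \NF(T)$ were friendly along some $u$, wide condition (2) would place $u \in T$, but then $K_s \cap T = \{s\}$ and $A_{s'} \cap T = \{s'\}$ would collapse to $s = u = s'$, a contradiction. For the equality $\overline{\NF(T)} = T$, one inclusion follows from the excerpt's observation that wide condition (1) forces $T$ to be closed. The reverse inclusion is by induction on segment length: given $s \in T \setminus \NF(T)$, pick a proper $s_2 \in (A_s \cup K_s) \cap T$, decompose $s = s_1 \circ s_2 \circ s_3$, and apply wide condition (3) to get $s_1, s_3 \in T$; each of $s_1, s_2, s_3$ is strictly shorter than $s$, so the induction hypothesis and closure under $\circ$ give $s \in \overline{\NF(T)}$.

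For the converse, let $X$ be nonfriendly. The central technical tool is a uniqueness-of-decomposition lemma: every $s \in \overline{X}$ admits a unique expression $s = x_1 \circ \cdots \circ x_k$ with all $x_i \in X$. To prove this, compare two such decompositions and focus on the first pieces $x_1$ and $y_1$: whichever is shorter is a proper initial subsegment of the longer, and the direction of the internal bridge in the longer's decomposition at the shorter's endpoint (S or E) places the shorter piece in $A$ or $K$ of the longer, witnessing a friendly pair in $X$ and contradicting nonfriendliness. Using this, I obtain $\NF(\overline{X}) = X$: for $\subseteq$, the initial piece of a decomposition of any $s \in \overline{X} \setminus X$ (necessarily of length at least $2$) lies in $A_s$ or $K_s$ according to the direction of the next bridge; for $\supseteq$, given $s \in X$, a hypothetical $t \in (A_s \cup K_s) \cap \overline{X}$ with $t \ne s$ and (say) $t \in A_s$ admits a decomposition $t = y_1 \circ \cdots \circ y_k$, and choosing the smallest $i$ for which either $i = k$ or the bridge $y_i \to y_{i+1}$ points E produces a piece $y_i$ that itself lies in $A_s$ (its entry is N since either $i = 1$ or the preceding bridge is S, and its exit is E), yielding a friendly pair $\{s, y_i\} \subset X$.

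The three wide conditions for $\overline{X}$ then follow from the uniqueness lemma by alignment arguments. For condition (1), comparing $s = y_1 \circ \cdots \circ y_l$ and $u = x_1 \circ \cdots \circ x_k$ when $u = s \circ t$ and $s, u \in \overline{X}$, the uniqueness argument applied inductively to initial pieces forces $y_i = x_i$ for $i \leq l$, so $t = x_{l+1} \circ \cdots \circ x_k \in \overline{X}$. Condition (3) is handled analogously: a proper $s_2 \in (A_s \cup K_s) \cap \overline{X}$ must appear as a consecutive block $x_{a+1} \circ \cdots \circ x_b$ in the canonical decomposition of $s$, because misalignment at either endpoint of $s_2$ produces a friendly pair through the interaction of $s_2$'s imposed boundary bridges (S at the start, E at the end, when $s_2 \in A_s$) with the decomposition of $s$. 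Condition (2) follows by a similar alignment argument: the common subsegment $u$ of friendly $s, t \in \overline{X}$ assembles into a consecutive concatenation of pieces drawn compatibly from both decompositions. The main obstacle throughout is the chain analysis that runs through these alignment claims: when the naive choice of piece does not immediately form a friendly pair, one must iterate along consecutive pieces of the two decompositions, tracking how internal bridges of one interact with boundary directions imposed elsewhere, and handling the subcases where a single piece of one decomposition spans multiple pieces of another; at each step one either extracts an immediate friendly pair or advances to the next piece, so that the process terminates either in a contradiction or at a decomposition boundary where the imposed directional constraint forces the friendly pair.
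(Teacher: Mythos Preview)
Your approach is essentially the same as the paper's, and it is correct. Both arguments pivot on the uniqueness of $X$-decompositions in $\overline{X}$ for a nonfriendly set $X$; you state this as a standalone lemma, while the paper uses it implicitly when verifying wide condition~(1). Your explicit proof that $X = \NF(\overline{X})$ is also welcome, since the paper asserts this in the statement but only establishes the other composite $T = \overline{\NF(T)}$ directly.

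The one substantive difference is in wide condition~(2). The paper proves it by a clean induction on the total length of $s$ and $t$: split $t = t_1 \circ t_2$ with $t_1, t_2 \in \overline{X}$, locate $u$ relative to this split, apply the inductive hypothesis to a shorter friendly pair, and use the already-proved conditions~(1) and~(3) to reassemble. Your direct alignment argument (``$u$ assembles into a consecutive concatenation of pieces drawn compatibly from both decompositions'') is workable but genuinely more intricate, because the endpoints of $u$ need not land on decomposition boundaries of either $s$ or $t$; one must then chase through successive $x_j$'s and $y_m$'s, as you acknowledge in your final paragraph. That chain analysis is exactly what the paper's induction packages away. If you were to write this out in full, I would recommend adopting the inductive version for condition~(2): it is shorter and avoids the sub-case explosion you allude to.
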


\begin{proof}

Let $X$ be a nonfriendly set of segments, and put $T=\ov{X}$. We claim that $T$ is wide. Let $s,t,u\in T$ such that $u=s\circ t$. If $s,t\in T$ then $u\in T$ since $T$ is closed. Suppose $s,u\in T$, and let $u=s_{i_1}\circ\cdots\circ s_{i_m}$ for some $i_1,\ldots,i_m\in[l]$. Let $s=s_{j_1}\circ\cdots\circ s_{j_k}$ for some $j_1,\ldots,j_k\in[l]$. Then $s_{i_1}\subseteq s_{j_1}$ or $s_{j_1}\subseteq s_{i_1}$ holds. If $i_1\neq j_1$, then these segments are friendly, a contradiction. Similarly, we have $i_2=j_2$, $i_3=j_3$, etc. Hence, $t=s_{i_{k+1}}\circ\cdots\circ s_{i_m}\in T$, as desired.

Let $s=t_1\circ t_2\circ t_3$ such that $s,t_2\in T$ and either $t_2\in A_s$ or $t_2\in K_s$. We show that $t_1,t_3\in T$. Without loss of generality, we may assume that $t_2$ is in $A_s$. Since $t_2$ is in $T$, there exist $u_1,\ldots,u_l\in X$ such that $t_2=u_1\circ\cdots\circ u_l$. Some factor, say $u_i$, is in $A_{t_2}$, which implies $u_i\in A_s$. Let $s_1,\ldots,s_m\in X$ such that $s=s_1\circ\cdots\circ s_m$. Let $s_j$ be the first segment in this list such that $s_j$ meets $u_i$ and $u_i$ does not extend to the East after the terminal vertex of $s_j$. Then the intersection of $u_i$ and $s_j$ is in $K_{u_i}$ and $A_{s_j}$. Since $u_i$ and $s_j$ are nonfriendly if they are distinct, this forces $u_i=s_j$. We now have $s_1\circ\cdots\circ s_{j-1}=t_1\circ(u_1\circ\cdots\circ u_{i-1})$ and $s_{j+1}\circ\cdots\circ s_m=(u_{i+1}\circ\cdots\circ u_l)\circ t_3$. By the previous paragraph, we deduce that $t_1\in T$ and $t_3\in T$.

Now let $s,t\in T$ such that $s$ and $t$ are friendly along a common subsegment $u$. We prove that $u\in T$ by induction on the sum of the lengths of $s$ and $t$. Since $s$ and $t$ are friendly, at least one of them is not minimal in $T$. Suppose $t=t_1\circ t_2$ where $t_1,t_2\in T$. If $u\in t_i$ for some $i\in\{1,2\}$ then $t_i$ and $s$ are friendly along $u$. In this case, $u\in T$ follows by induction. Otherwise, $u=u_1\circ u_2$ where $t_1$ ends with $u_1$ and $t_2$ starts with $u_2$. Depending on orientation, either $t_1$ is friendly with $s$ along $u_1$ or $t_2$ is friendly with $s$ along $u_2$, or both. Without loss of generality, we assume that $t_2$ is friendly with $s$ along $u_2$, so $u_2\in T$. Let $s=s^{\pr}\circ u_2\circ s^{\pr\pr}$, where $s^{\pr\pr}$ may be empty. Then $s^{\pr}\in T$. Since $s^{\pr}$ and $t_1$ are friendly along $u_1$, we have $u_1\in T$ by induction. Since $T$ is closed, this means $u=u_1\circ u_2\in T$.

Conversely, suppose that $T$ is a wide set. To see that $\text{NF}(T)$ is a nonfriendly set, let $s_1, s_2 \in \text{NF}(T)$ and suppose they are friendly along $u$. Up to reversing the roles of $s_1$ and $s_2$, this means $u \in A_{s_1}$ and $u \in K_{s_2}$. Now since $T$ is wide, $u \in T$. We obtain $u, s_1 \in A_{s_1} \cap T$ and $u, s_2 \in K_{s_2}\cap T$. As $u$ is a proper subsegment of at least one of $s_1$ and $s_2$, this contradicts that $s_1, s_2 \in \text{NF}(T)$.

Next, we show that $T = \overline{{\text{NF}}(T)}$. We have already established that $\overline{\text{NF}(T)} \subset T$ so it remains to show that $T \subset \overline{\text{NF}(T)}$. Suppose $t \in T$. Either $t \in \text{NF}(T)$ or $t \not \in \text{NF}(T)$. We prove the result by induction on the length of $t$ treating the case $t \in \text{NF}(T)$ as the base case. Now, without loss of generality, there exists $s \in A_t \cap T$ where $s \in \text{NF}(T)$ and $s \neq t$ such that either $t = s_1 \circ s \circ s_2$ or $t = s_1 \circ s$ for some non-empty segments $s_1, s_2 \in \text{Seg}(\lambda)$.

Consider the case where $t = s_1\circ s \circ s_2$. As $s, t \in A_t$ and $T$ is wide, we know $s_1, s_2 \in T$. By induction, this implies that $s_1, s_2 \in \overline{\text{NF}(T)}.$ Thus $t \in \overline{\text{NF}(T)}.$

Now consider the case where $t = s_1\circ s$. As $T$ is wide, we know that $|T\cap \{s, s_1, t\}| \neq 2$. We also have that $s, t \in T$ so $s_1 \in T$. By induction, this implies that $s_1 \in \overline{\text{NF}(T)}$. Thus $t \in \overline{\text{NF}(T)}.$\end{proof}

Let $\Psi^{w}(\lambda)$ be the poset of wide sets of segments, ordered by inclusion. In Proposition~\ref{prop_shard_int_graded_lattice}, we prove that $\Psi^{w}(\lambda)$ is a graded lattice. Let $\rk$ be the rank function of this lattice.

Recall that the \emph{M\"obius function} $\mu$ of a poset $P$ is the unique function on closed intervals of $P$ such that for $a\leq b$
$$\sum_{c:\ a\leq c\leq b}\mu(a,c)=\begin{cases}1\ \mbox{if }a=b\\0\ \mbox{if }a\neq b\end{cases}.$$
Letting $\mu$ be the M\"obius function on $\Psi^{w}(\lambda)$, we define the $M$-triangle to be the polynomial
$$M(x,y)=\sum_{\substack{X,Y\in\Psi^{w}(\lambda)\\Y\subseteq X}}\mu(Y,X)x^{\rk(X)}y^{\rk(Y)}.$$

\begin{example}
{Let $\lambda$ be a $2\times n$ rectangle shape. Then $\lambda$ has the set of segments $\{t_{ij}:\ 1\leq i<j\leq n\}$ where $t_{ij}\subseteq t_{kl}$ exactly when $k\leq i<j\leq l$. A partition $\Bbf$ of $[n]$ is \emph{noncrossing} in the classical sense if there do not exist two distinct blocks $B_1,B_2\in\Bbf$ and $i,k\in B_1,\ j,l\in B_2$ such that $i<j<k<l$ holds. Given a partition $\Bbf$, we may consider the set $T$ of segments $t_{ij}$ such that $i$ and $j$ are in the same block of $\Bbf$. Then the partition $\Bbf$ is noncrossing if and only if $T$ is wide.}

{In Figure~\ref{fig_2x3_psi}, we show the lattice $\Psi^{w}(\lambda)$ when $n = 3$. In this case, the $M$-triangle is as follows $$M(x,y) = 1 + 3xy + x^2y^2 - 3x - 3x^2y + 2x^2.$$}
\end{example}

\begin{figure}
$$\includegraphics[scale=1.2]{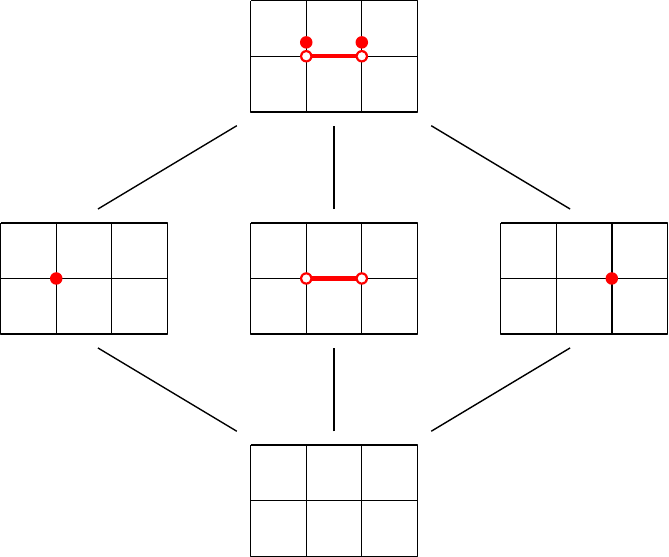}$$
\caption{{The lattice $\Psi^{w}(\lambda)$ where $\lambda$ is a $2\times 3$ rectangle.}}
\label{fig_2x3_psi}
\end{figure}

\section{The Grid-Tamari order}\label{sec_tamari}

The facets of the nonkissing complex form a regular graph where two facets $F,F^{\pr}$ are adjacent if $F\cap F^{\pr}$ is a ridge of $\Delta^{NK}(\lambda)$. Let $F,F^{\pr}$ be adjacent facets and let $p\in F,\ q\in F^{\pr}$ such that $F\setm\{p\}=F^{\pr}\setm\{q\}$. Then $p$ and $q$ kiss along a \emph{unique} maximal subsegment $s$ of $p$ and $q$. We define an orientation and edge-labeling on the graph of facets where $F\stackrel{s}{\ra}F^{\pr}$ if $p$ enters $s$ from the West and leaves to the South and $q$ enters $s$ from the North and leaves to the East. We say that $F^\prime$ may be obtained from $F$ by a \emph{flip}.

In \cite{mcconville:2015lattice}, this directed graph was shown to be the Hasse diagram of a poset called the \emph{Grid-Tamari order}. The Grid-Tamari order, denoted $\GT(\lambda)$, is the poset whose elements are facets of $\widetilde{\Delta}^{NK}(\lambda)$ where for any two facets $F, F^\prime \in \widetilde{\Delta}^{NK}(\lambda)$ we have $F \le F^\prime$ if $F^\prime$ may be obtained from $F$ by a sequence of flips. Furthermore, this poset was proved to be a \emph{congruence-uniform lattice}, which we define in Section~\ref{subsec_gt_lattice}. Congruence-uniformity may be used to define two additional structures, the canonical join complex and the shard intersection order. We will prove that these structures are isomorphic to the nonfriendly complex and the poset of wide sets of segments, respectively.

\subsection{Lattices}\label{subsec_gt_lattice}

In this section, we give some background on lattice theory. The key definitions are the canonical join complex of a semidistributive lattice and the shard intersection order of a congruence-uniform lattice. Throughout this section, we let $(P, \le)$ denote a finite poset.

Given a poset $(P,\leq)$, its \emph{dual}  has the same underlying set, but has the opposite order relations. Many lattice properties come in dual pairs. An \emph{order ideal} $X$ of a poset $P$ is a subset of $P$ such that if $x\leq y$ and $y\in X$ then $x\in X$. An \emph{order filter} of $P$ is an order ideal of the dual of $P$.

Given $x,y\in P$ the \emph{join} $x\vee y$ is the least upper bound of $\{x,y\}$ if it exists. Dually, the \emph{meet} $x\wedge y$ is the greatest lower bound of $\{x,y\}$ if it exists. The poset is a \emph{lattice} if the join and meet of any two elements are defined. An element $j$ of a lattice $L$ is \emph{join-irreducible} if for $x,y\in L$ such that $j=x\vee y$, either $j=x$ or $j=y$. If $L$ is finite, $j$ is join-irreducible exactly when it covers a unique element, which we call $j_*$. A \emph{meet-irreducible} element $m$ is defined dually and is covered by a unique element $m^*$. We let $\JI(L)$ and $\MI(L)$ denote the sets of join-irreducible and meet-irreducible elements of $L$, respectively.

A \emph{join-representation} for an element $x$ is an identity of the form $x=\bigvee A$ for some set of elements $A$. To simplify the language, we say that $A$ is a join-representation of $x$ if $x=\bigvee A$. A \emph{join-representation} $A$ is irredundant if $x>\bigvee B$ for all proper subsets $B\subsetneq A$. We observe that an element $x$ is \emph{join-irreducible} if and only if the only irredundant join-representation of $x$ is $\{x\}$. We partially order irredundant join-representations of $x$, where $A\leq B$ means that for all $a\in A$ there exists $b\in B$ with $a\leq b$. If the set of irredundant join-representations of $x$ has a minimum element, this minimum representation is called the \emph{canonical join-representation} of $x$. The elements in a canonical join-representation are necessarily join-irreducible.

A lattice $L$ is \emph{semidistributive} if
\begin{align*}
x\vee z &=y\vee z\ \ \mbox{implies}\ \ x\vee z=(x\wedge y)\vee z\ \ \mbox{and}\\
x\wedge z &= y\wedge z \ \ \mbox{implies}\ \ x\wedge z=(x\vee y)\wedge z 
\end{align*}
for all $x,y,z\in L$. Equivalently \cite[Theorem 2.24]{freese.jezek.nation:1995free}, a lattice is semidistributive if and only if every element admits a canonical join-representation and a canonical meet-representation, defined dually.

If a set $A$ is a canonical join-representation of some element, then so is any subset of $A$. Hence, the set of canonical join-representations is the set of faces of a simplicial complex, known as the \emph{canonical join complex}. In \cite{barnard:2016canonical}, it was shown that for any finite semidistributive lattice $L$, the canonical join complex is flag. For example, the faces of the canonical join complex of the lattice of order ideals of a finite poset $P$ is the set of antichains of $P$. The canonical join complex of the weak order of type A is a simplicial complex of noncrossing arc diagrams \cite{reading:2015noncrossing}. This complex contains the canonical join complex of the Tamari lattice. In Figure~\ref{fig_2x3_cjc}, we show the canonical join complex of $\GT(\lambda)$ where $\lambda$ is a $2\times 3$ rectangle. Equivalently, Figure~\ref{fig_2x3_cjc} is the canonical join complex of the rank 2 Tamari lattice.

\begin{figure}
$$\includegraphics[scale=1.2]{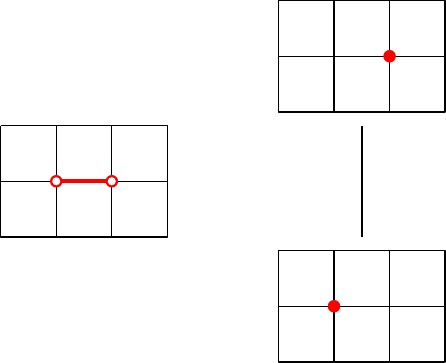}$$
\caption{{The canonical join complex where $\lambda$ is a $2\times 3$ rectangle.}}
\label{fig_2x3_cjc}
\end{figure}

Let $L$ be a semidistributive lattice. If $x=\bigvee A$ is the canonical join-representation of some element $x\in L$, then there is a bijection between lower covers of $x$ and the elements of $A$ \cite{barnard:2016canonical}. Conversely, if $x=\bigwedge B$ is a canonical meet-representation, then the upper covers of $x$ are in bijection with the elements of $B$.

An equivalence relation $\Theta$ on a lattice $L$ is a \emph{lattice congruence} if whenever $x\equiv y\mod\Theta$, we have $x\wedge z\equiv y\wedge z\mod\Theta$ and $x\vee z\equiv y\vee z\mod\Theta$. For $x\in L$, we let $[x]=[x]_{\Theta}$ be the $\Theta$-equivalence class of $x$. The \emph{quotient lattice} $L/\Theta$ is the lattice of $\Theta$-equivalence classes where $[x]\vee[y]=[x\vee y]$ and $[x]\wedge[y]=[x\wedge y]$ for $x,y\in L$. The following characterization of lattice congruences of finite lattices is well-known; see e.g. \cite[Proposition 9-5.2]{reading:2016lattice}.

\begin{lemma}\label{lem_lattice_congruence}
An equivalence relation $\Theta$ on a lattice $L$ is a lattice congruence if and only if
\begin{itemize}
\item the equivalence classes of $\Theta$ are all closed intervals of $L$, and
\item the maps $\pi^{\uparrow}$ and $\pi_{\downarrow}$ taking an element of $L$ to the largest and smallest elements of its $\Theta$-equivalence class are both order-preserving.
\end{itemize}
\end{lemma}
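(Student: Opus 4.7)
The plan is to handle the two directions of the biconditional separately, and in each direction to work with the join-congruence property; the meet case is dual.

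For the forward direction, assume $\Theta$ is a lattice congruence. To see that each class is convex, suppose $x\equiv y\pmod{\Theta}$ with $x\le z\le y$. Applying the congruence to the identities $z=z\vee x$ and $y=z\vee y$ gives $z\equiv y$, so the class $[x]_{\Theta}$ is order-convex. Since $L$ is finite, joining (resp.\ meeting) all elements of $[x]_{\Theta}$ yields an element of the same class, giving a maximum $\pi^{\uparrow}(x)$ and minimum $\pi_{\downarrow}(x)$; hence each class is a closed interval. For monotonicity of $\pi^{\uparrow}$, suppose $x\le y$. Then $\pi^{\uparrow}(x)\vee y\equiv x\vee y=y\equiv \pi^{\uparrow}(y)$, so $\pi^{\uparrow}(x)\vee y$ lies in the class $[y]_{\Theta}$ and therefore is bounded above by $\pi^{\uparrow}(y)$. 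In particular $\pi^{\uparrow}(x)\le\pi^{\uparrow}(y)$, and the monotonicity of $\pi_{\downarrow}$ follows dually.

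For the backward direction, assume the two bulleted conditions hold. To prove $\Theta$ is a lattice congruence, it is enough to show that $x\equiv y\pmod{\Theta}$ implies $x\vee z\equiv y\vee z\pmod{\Theta}$ for every $z\in L$ (the meet identity is dual). I would first reduce to the comparable case: if $x\equiv y$, then $a=\pi_{\downarrow}(x)=\pi_{\downarrow}(y)$ and $b=\pi^{\uparrow}(x)=\pi^{\uparrow}(y)$, and by convexity $a\equiv x\equiv y$, so it suffices to treat $x\le y$ with $x\equiv y$. In that case set $u=x\vee z$ and $v=y\vee z$, so $u\le v$; the goal is $u\equiv v$. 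The key inequality is $v\le\pi^{\uparrow}(u)$, which I would obtain from monotonicity of $\pi^{\uparrow}$ as follows: from $x\le u$ one has $\pi^{\uparrow}(u)\ge \pi^{\uparrow}(x)=\pi^{\uparrow}(y)\ge y$, while $\pi^{\uparrow}(u)\ge u\ge z$. Thus $\pi^{\uparrow}(u)\ge y\vee z=v$, so $v$ lies in the interval $[\pi_{\downarrow}(u),\pi^{\uparrow}(u)]=[u]_{\Theta}$, giving $u\equiv v$.

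The only genuinely delicate step is the backward direction's comparable case, where one needs to bootstrap from $x\le y$ and $\pi^{\uparrow}(x)=\pi^{\uparrow}(y)$ to a bound on $\pi^{\uparrow}(x\vee z)$; this is where the order-preserving hypothesis on $\pi^{\uparrow}$ is indispensable, since otherwise nothing would guarantee that $\pi^{\uparrow}(x\vee z)$ is large enough to swallow $y\vee z$. Everything else is routine manipulation of joins and the fact that a convex finite subset of a finite lattice closed under the projection maps is automatically a closed interval.
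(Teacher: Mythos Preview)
Your proof is correct and is the standard argument for this well-known characterization. The paper does not actually prove this lemma; it is stated with a reference to \cite[Proposition 9-5.2]{reading:2016lattice} and no proof is given, so there is nothing to compare against beyond noting that your argument is exactly the expected one.
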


In particular, if $x$ covers $y$ in $L$, then either $[x]=[y]$ or $[x]$ covers $[y]$ in $L/\Theta$. A stronger result holds \cite[Proposition 2.2]{reading:lattice_congruence}, which we use to determine the lattice congruences of the Grid-Tamari order.

\begin{lemma}\label{lem_lattice_quotient_cover}
Let $L$ be a finite lattice with lattice congruence $\Theta$. If $x$ is the minimum element in its $\Theta$-equivalence class, then for each $y\in L$ covered by $x$, the class $[y]$ is covered by $[x]$ in $L/\Theta$. Furthermore, this is a bijection between lower covers of $x$ and lower covers of $[x]$. Dually, if $x$ is the maximum element in its $\Theta$-equivalence class, then there is a similar bijection between the upper covers of $x$ and the upper covers of $[x]$.
\end{lemma}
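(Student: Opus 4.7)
The plan is to prove the lemma by three independent checks: (i) the map $y \mapsto [y]$ sends lower covers of $x$ to lower covers of $[x]$; (ii) this map is injective; (iii) this map is surjective. The dual statement follows by reversing the order. Throughout I will use Lemma~\ref{lem_lattice_congruence}: equivalence classes are closed intervals, and both $\pi_\downarrow$ and $\pi^\uparrow$ are order-preserving. The hypothesis that $x$ is the minimum of $[x]$ means $\pi_\downarrow(x) = x$.

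For (i), suppose $y \lessdot x$. Since $y < x$ and $x$ is the minimum of $[x]$, $y \notin [x]$, hence $[y] < [x]$. To see that $[y] \lessdot [x]$, let $[y] \leq [z] \leq [x]$. Replace $z$ with any representative (say $z$ itself) and form $y \vee z$; this element has class $[y] \vee [z] = [z]$ and satisfies $y \leq y \vee z \leq x$ (since $z \leq x$ may be arranged by taking $z := \pi_\downarrow(z) \leq \pi_\downarrow(x) = x$). Because $y \lessdot x$, either $y \vee z = y$, giving $[z] \leq [y]$ and hence $[z] = [y]$, or $y \vee z = x$, giving $[z] = [x]$.

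For (ii), suppose $y_1, y_2$ are distinct lower covers of $x$ with $[y_1] = [y_2]$. Then $y_1 \vee y_2$ lies in $[y_1]$ (as equivalence classes are sublattices, being intervals) and satisfies $y_1 \vee y_2 \leq x$. If $y_1 \vee y_2 = x$, then $x \in [y_1]$, which contradicts $y_1 < x$ and $x = \pi_\downarrow(x)$. So $y_1 \vee y_2 < x$, but then $y_1 \leq y_1 \vee y_2 < x$ together with $y_1 \lessdot x$ forces $y_2 \leq y_1 \vee y_2 = y_1$, and symmetrically $y_1 \leq y_2$, so $y_1 = y_2$.

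For (iii), given a lower cover $C \lessdot [x]$ in $L/\Theta$, pick any $y'' \in C$ and set $y := y'' \wedge x$. Since $[y] = [y''] \wedge [x] = C \wedge [x] = C$ (using $C < [x]$), the element $y$ represents $C$ and satisfies $y \leq x$ with $y \neq x$ (else $x \in C$, contradicting $x = \pi_\downarrow(x)$). Among all such representatives below $x$, choose $y$ to be the maximum of $C \cap (-\infty, x]$; this maximum exists because $C$ is an interval $[a,b]$ and the intersection equals $[a, b \wedge x]$. I then verify $y \lessdot x$: if $y < w \leq x$, then $[y] \leq [w] \leq [x]$, and since $C \lessdot [x]$ either $[w] = C$, whence $w \leq y$ by maximality (contradiction), or $[w] = [x]$, whence $w \geq x$ and so $w = x$.

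The main obstacle is the surjectivity step (iii): one must take care to pick the right representative of $C$ below $x$ so that it actually becomes a cover of $x$ in $L$. The key idea is that maximizing the representative within $(-\infty, x]$ closes the gap, because any intermediate $w$ strictly between the representative and $x$ would force a class strictly between $C$ and $[x]$ in the quotient.
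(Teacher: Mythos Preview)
Your proof is correct. The paper does not give its own proof of this lemma; it simply cites \cite[Proposition~2.2]{reading:lattice_congruence}, so there is nothing to compare against beyond noting that your argument supplies a complete, self-contained verification using only Lemma~\ref{lem_lattice_congruence}.
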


Given a lattice $L$, its set of lattice congruences $\Con(L)$ forms a distributive lattice under refinement order. Hence when $L$ is finite, $\Con(L)$ is isomorphic to the lattice of order ideals of $\JI(\Con(L))$. If $y$ covers $x$, we write $\con(x,y)$ for the minimal lattice congruence in which $x\equiv y\ (\con(x,y))$ holds.

For any finite lattice $L$ with lattice congruence $\Theta$, we have
$$\Theta=\bigvee_{\substack{j\in \JI(L)\\ j\equiv j_*\mod\Theta}}\con(j_*,j).$$
Hence, the join-irreducible congruences are always of the form $\con(j_*,j)$ for some $j\in \JI(L)$.  A finite lattice $L$ is \emph{congruence-uniform} (or \emph{bounded}) if
\begin{itemize}
\item the map $j\mapsto\con(j_*,j)$ is a bijection from $\JI(L)$ to $\JI(\Con(L))$, and
\item the map $m\mapsto\con(m,m^*)$ is a bijection from $\MI(L)$ to $\MI(\Con(L))$.
\end{itemize}
Alternatively, finite congruence-uniform lattices may be characterized as homomorphic images of free lattices with bounded fibers or as lattices constructible from the one-element lattice by a sequence of interval doublings \cite{day:congruence}.

For $x\in L$, let $\psi(x)$ be the set
$$\{\con(w,z):\ \bigwedge_{i=1}^l y_i\leq w\lessdot z\leq x\},$$
where $y_1,\ldots,y_l$ are the elements covered by $x$. The \emph{(lattice) shard intersection order} $\Psi^l(L)$ is the collection of sets $\psi(x)$, ordered by inclusion. For a congruence-uniform lattice $L$, the map $x\mapsto\psi(x)$ is a bijection between $L$ and $\Psi^l(L)$. The shard intersection order derives its name from some geometric examples, which we recall in Section~\ref{sec_shard}. This formulation in lattice-theoretic terms was given by Reading \cite{reading:2016lattice} and was used in \cite{garver.mcconville:2016oriented} to give a correspondence between noncrossing tree partitions and some partial triangulations of a polygon.

\subsection{Biclosed sets}\label{subsec_gt_biclosed}

Recall from Section~\ref{subsec_transitive} that a collection $X$ of segments is \emph{closed} if whenever $s,t\in X$ and $s\circ t$ is well-defined then $s\circ t\in X$. We say $X$ is \emph{biclosed} if it is closed and its complement $\Seg(\lambda)\setm X$ is closed. The set $\Bic(\lambda)$ of biclosed sets of segments forms a poset under inclusion. That is, if $X,Y\in\Bic(\lambda)$, we set $X\leq Y$ if $X\subseteq Y$.

For a general closure space, the poset of biclosed sets may not be a lattice. Although the closure of the union of two sets $X,Y$ is necessarily closed, its complement may not be. Furthermore, even if the poset of biclosed sets is a lattice, there may be too few biclosed sets to be useful. For the closure space on segments, we proved in \cite{mcconville:2015lattice} that there are enough biclosed sets in the sense that $\Bic(\lambda)$ is graded by the cardinality function. Moreover, $\Bic(\lambda)$ is a congruence-uniform lattice.

\begin{theorem}\label{thm_bic_prop}\cite{mcconville:2015lattice}
The poset of biclosed sets has the following properties. These three properties together imply that $\Bic(\lambda)$ is a congruence-uniform lattice.
\begin{enumerate}
\item The poset $\Bic(\lambda)$ is graded, with rank function $X\mapsto|X|$.
\item\label{thm_bic_prop_2} The poset $\Bic(\lambda)$ is a lattice where
$$X,Y,W\in\Bic(\lambda),\ W\subseteq X\cap Y\ \ \mbox{implies}\ \ W\cup\ov{(X\cup Y)\setm W}\in\Bic(\lambda).$$
\item If $u\in\ov{\{s,t\}}\setm\{s,t\}$, then $u=s\circ t$.
\end{enumerate}
\end{theorem}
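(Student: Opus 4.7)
The plan is to verify the three properties in the order (3), (1), (2), treating (3) as the basic structural fact about concatenation and then exploiting the closure/coclosure structure on $\Seg(\lambda)$ to obtain the others. Once all three are established, congruence-uniformity of $\Bic(\lambda)$ follows from Day's characterization of bounded lattices as those constructible by iterated interval doublings, for which the grading from (1) and the explicit join formula from (2) are the required ingredients.

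For property (3), I would argue directly from the definition of the closure operator. An element $u \in \ov{\{s,t\}}$ has the form $u_1 \circ \cdots \circ u_k$ with each $u_i \in \{s,t\}$. Since every segment has pairwise distinct vertices, no $u_i$ can be repeated in such a concatenation without forcing its vertices to occur twice in $u$; hence $k \leq 2$. Moreover, the SE-orientation of segments implies that at most one of $s \circ t$ and $t \circ s$ can be defined, since each would require the terminal vertex of the first factor to be adjacent in incompatible ways to the initial vertex of the second. Thus $\ov{\{s,t\}} \subseteq \{s, t, s \circ t, t \circ s\}$, and any $u$ distinct from $s$ and $t$ is the unique defined concatenation, which we may take to be $s \circ t$.

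For property (1), I would show $|Y \setm X| = 1$ for every cover $X \lessdot Y$ by producing, given a strict containment $X \subsetneq Y$ of biclosed sets, some $s \in Y \setm X$ with $X \cup \{s\} \in \Bic(\lambda)$. Using (3), closure of $X \cup \{s\}$ reduces to the condition that $s \circ t, t \circ s \in X$ whenever these are defined for $t \in X$, and coclosure reduces to the condition that $s$ admits no decomposition $s = s_1 \circ s_2$ with both $s_1, s_2 \notin X$. The main obstacle — indeed the crux of the proof — is arranging both conditions to hold simultaneously. The structural hint is that coclosure of $\Seg(\lambda) \setm Y$ places any offending decomposition factor back into $Y \setm X$, while closure of $Y$ places any offending product $s \circ t$ or $t \circ s$ into $Y \setm X$ as well; so both kinds of violation produce new candidate elements. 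I would try to pin down $s$ by choosing it minimal with respect to a carefully tuned partial order on $Y \setm X$ — refining length-minimality by a tie-breaking rule based on subsegment containment — so that any would-be violation produces an element strictly smaller in that order, contradicting minimality.

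For property (2), set $Z := W \cup \ov{(X \cup Y) \setm W}$ with $W \subseteq X \cap Y$ biclosed. To verify $Z$ is closed, consider $a \circ b$ with $a, b \in Z$; the only nontrivial case is $a \in W$ and $b = b_1 \circ \cdots \circ b_k$ with each $b_i \in (X \cup Y) \setm W$. Since $a \in W \subseteq X \cap Y$, the partial product $a \circ b_1$ lies in whichever of $X, Y$ contains $b_1$, hence belongs to $W$ or to $(X \cup Y) \setm W$; iterating this analysis expresses $a \circ b$ as an element of $Z$. For coclosure of $Z$, suppose $a, b \notin Z$ with $a \circ b$ defined; then $a, b \notin W$, and I would use (3) together with the coclosures of $X$ and $Y$ to argue that $a \circ b \notin X \cup Y$, since otherwise the induced decomposition would place one of $a, b$ back into $Z$. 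Setting $W = X \cap Y$ then exhibits $X \vee Y = (X \cap Y) \cup \ov{(X \cup Y) \setm (X \cap Y)}$, and the meet is obtained by a dual argument.
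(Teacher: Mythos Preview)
The paper does not prove this theorem; it is quoted from \cite{mcconville:2015lattice}, so there is no in-paper argument to compare against. Your sketch for (3) is fine, and the closure half of (2) is correct. Two points, however, are genuine gaps.

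First, in the coclosure argument for (2) you show only that $a\circ b\notin X\cup Y$, but what is required is $a\circ b\notin Z=W\cup\ov{(X\cup Y)\setm W}$, and $Z$ may properly contain $X\cup Y$. The missing step is: if $a\circ b=c_1\circ\cdots\circ c_m$ with each $c_i\in(X\cup Y)\setm W$, then the break between $a$ and $b$ either falls between two $c_j$'s (forcing $a\in\ov{(X\cup Y)\setm W}\subseteq Z$, a contradiction) or splits some $c_j=c_j'\circ c_j''$; coclosure of whichever of $X,Y$ contains $c_j$ then places $c_j'$ or $c_j''$ in $X\cup Y$, and combining with the already-established closure of $Z$ forces $a\in Z$ or $b\in Z$. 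Also, your final sentence sets $W=X\cap Y$ to read off the join, but $X\cap Y$ need not be biclosed; the correct specialization is $W=\emptyset$, giving $X\vee Y=\ov{X\cup Y}$ as the paper itself notes.

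Second, your plan for (1) is exactly where the difficulty lives, and the sketch does not resolve it. A length-minimal $s\in Y\setm X$ guarantees the coclosure condition (any factor of $s$ outside $X$ would lie in $Y\setm X$ by coclosure of $Y$, contradicting minimality), but it does \emph{not} guarantee the closure condition: if $t\in X$ and $s\circ t\notin X$ then $s\circ t\in Y\setm X$ is \emph{longer}. Conversely, a length-maximal choice handles neither condition. The phrase ``a carefully tuned partial order refining length-minimality by a tie-breaking rule'' is precisely the content that needs to be supplied; you would have to exhibit it and verify both conditions simultaneously, which is the actual work in the cited reference.
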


We note that taking $W=\emptyset$ in \ref{thm_bic_prop_2}, we have $\ov{X\cup Y}$ is biclosed whenever $X$ and $Y$ are biclosed. Since $\ov{X\cup Y}$ is the smallest closed set containing both $X$ and $Y$, this set is the join of $X$ and $Y$. We can also calculate the meet of two biclosed sets as follows.

\begin{lemma}\label{Lemma_meet_in_bic}
For $X, Y \in \text{Bic}(\lambda)$, one has $X \wedge Y = (X^c \vee Y^c)^c.$
\end{lemma}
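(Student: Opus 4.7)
The plan is to exploit the fact that set-theoretic complementation $X \mapsto X^c$ (relative to $\Seg(\lambda)$) is an order-reversing involution on $\Bic(\lambda)$, and then invoke the general principle that any such antiautomorphism of a lattice interchanges meets and joins.

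First, I would observe that complementation sends $\Bic(\lambda)$ to itself. This is immediate from the definition: $X$ is biclosed precisely when both $X$ and $X^c$ are closed, a condition symmetric in $X$ and $X^c$. Hence $X \in \Bic(\lambda)$ if and only if $X^c \in \Bic(\lambda)$. Next, I would note that the containment $X \subseteq Y$ is equivalent to $Y^c \subseteq X^c$, so complementation reverses the order on $\Bic(\lambda)$. Combined with $X^{cc}=X$, this gives an order-reversing involution $c\colon \Bic(\lambda) \to \Bic(\lambda)$.

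Now I would apply the standard fact that any order-reversing bijection $\sigma$ on a lattice $L$ swaps meets and joins: if $A \wedge B$ is the greatest lower bound of $A$ and $B$, then $\sigma(A \wedge B)$ is the least upper bound of $\sigma(A)$ and $\sigma(B)$, i.e.\ $\sigma(A) \vee \sigma(B)$. In our setting this yields $(X \wedge Y)^c = X^c \vee Y^c$, where the join on the right exists by Theorem~\ref{thm_bic_prop}(\ref{thm_bic_prop_2}). Taking complements of both sides and using $X^{cc}=X$ gives $X \wedge Y = (X^c \vee Y^c)^c$, as required.

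There is no real obstacle here; the only point needing a momentary check is that the complement of a biclosed set is biclosed, which is immediate from the definition. The content of the lemma is essentially the observation that $\Bic(\lambda)$ carries a natural duality, for which the lattice operations on complements reduce to the lattice operations on the sets themselves.
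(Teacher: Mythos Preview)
Your proof is correct and takes a cleaner, more conceptual route than the paper's argument. The paper proves the two inclusions $X \wedge Y \subseteq (X^c \vee Y^c)^c$ and $(X^c \vee Y^c)^c \subseteq X \wedge Y$ directly at the level of elements: for the first it uses $X \wedge Y \subseteq X \cap Y$, and for the second it observes that $(X^c \vee Y^c)^c$ is a biclosed set contained in both $X$ and $Y$, hence is one of the joinands in the expression $X \wedge Y = \bigvee_{Z \subseteq X,\, Z \subseteq Y} Z$. You instead identify the structural reason the identity holds: complementation is an order-reversing involution on $\Bic(\lambda)$, and any such antiautomorphism of a lattice interchanges meets and joins. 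Your approach is shorter and makes the lemma an instance of a general principle; the paper's approach is more hands-on and avoids invoking the general fact about antiautomorphisms, at the cost of essentially reproving it in this special case.
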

\begin{proof}
Let $s \in X \wedge Y$. It follows that $s \in X \cap Y$. Now $s \not \in X^c\cup Y^c \subset {X^c \vee Y^c}.$ This implies that $s \in (X^c \vee Y^c)^c.$

To prove the opposite inclusion, observe that $$X \wedge Y = \displaystyle \bigvee_{\begin{array}{c}\small\text{$Z \in \text{Bic}(\lambda)$}\\ \small \text{$Z \subset X, Z\subset Y$} \end{array}} Z.$$ Now notice that if $s \in (X^c \vee Y^c)^c$, then $s \not \in X^c$ and $s \not \in Y^c$. This implies that $s \in X$ and $s \in Y$. Thus $(X^c \vee Y^c)^c \subset X$ and $(X^c \vee Y^c)^c \subset Y$. Since $(X^c \vee Y^c)^c \in \text{Bic}(\lambda)$, it follows that $(X^c \vee Y^c)^c$ is a joinand in the join representation of $X \wedge Y$ shown above. We obtain that $(X^c \vee Y^c)^c \subset X \wedge Y$.\end{proof}

The Grid-Tamari order $\GT(\lambda)$ is isomorphic to both a quotient lattice and a sublattice of $\Bic(\lambda)$. We recall the quotient lattice and sublattice maps from \cite[Section 8]{mcconville:2015lattice}. Define the quotient map $\eta:\Bic(\lambda)\ra\GT(\lambda)$ by extending each vertical edge of $\lambda$ to a boundary path as follows. For a set of segments $X$, let $\eta(X)$ be the set of paths
$$\eta(X)=\{p_e:\ e \mbox{ is a vertical edge of }\lambda\}$$
where $p_e=(v_0,\ldots,v_l)$ is the unique boundary path such that $e=(v_{j-1},v_j)$ for some $j$, and the following two conditions hold.
\begin{itemize}
\item For $1\leq i\leq j-1$, the vertex $v_{i-1}$ is North of $v_i$ if $(v_i,\ldots,v_{j-1})\in X$ and $v_{i-1}$ is West of $v_i$ if $(v_i,\ldots,v_{j-1})\notin X$.
\item For $j\leq k<l$, the vertex $v_{k+1}$ is East of $v_k$ if $(v_j,\ldots,v_k)\in X$ and $v_{k+1}$ is South of $v_k$ if $(v_j,\ldots,v_k)\notin X$.
\end{itemize}

When restricted to a biclosed set of segments, $\eta(X)$ is a collection of nonkissing paths. Upon removing the vertical paths in this set, $\eta(X)$ is a facet of $\wtil{\Delta}^{NK}(\lambda)$. Moreover, the map $\eta$ is surjective and preserves the lattice operations, so it is a lattice quotient map.

Let $\Theta$ be the equivalence relation on biclosed sets where $X\equiv Y\mod\Theta$ if $\eta(X)=\eta(Y)$. Since $\eta$ is a lattice map, $\Theta$ is a lattice congruence. For $X\in\Bic(\lambda)$, the minimum biclosed set $\Theta$-equivalent to $X$ is the set
$$X^{\downarrow}=\{s\in X:\ A_s\subseteq X\},$$
where $A_s$ is the set of SW-subsegments of $s$ defined in Section~\ref{subsec_comb_paths_segments}.

Let $\phi:\GT(\lambda)\ra\Bic(\lambda)$ be the function where $\phi(F)=\bigvee_{p\in F}A_p$. As $A_p$ is minimal in its $\Theta$-equivalence class, so is the join of any set of elements of the form $A_p$. It was proved in \cite[Section 8]{mcconville:2015lattice} that $\phi$ is an embedding of the poset $\GT(\lambda)$ in $\Bic(\lambda)$. Hence, $\phi$ identifies $\GT(\lambda)$ with a join-subsemilattice of $\Bic(\lambda)$. We claim that this map also preserves meets, so it is a sublattice map. We prove the following equivalent lemma.

\begin{lemma}\label{lem_sublattice}
If $X$ and $Y$ are $\Theta$-minimal biclosed sets of segments, then so is $X\wedge Y$.
\end{lemma}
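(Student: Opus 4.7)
The plan is to combine the meet formula from Lemma~\ref{Lemma_meet_in_bic}, namely $X\wedge Y = (X^c\vee Y^c)^c$, with the fact from Theorem~\ref{thm_bic_prop} that joins in $\Bic(\lambda)$ are computed as closures of unions (so $X^c\vee Y^c = \overline{X^c\cup Y^c}$), together with the characterization that a biclosed set $Z$ is $\Theta$-minimal precisely when $A_s\subseteq Z$ for every $s\in Z$. The task then reduces to showing that if $s\in X\wedge Y$ and $t\in A_s$, then $t\in X\wedge Y$.

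Since $X\wedge Y\subseteq X\cap Y$ and both $X$ and $Y$ are $\Theta$-minimal, the $\Theta$-minimality hypothesis gives $A_s\subseteq X\cap Y$; in particular $t\in X\cap Y$. Supposing for contradiction that $t\notin X\wedge Y$, we have $t\in\overline{X^c\cup Y^c}$, so $t$ admits a concatenation $t=u_1\circ\cdots\circ u_k$ with each $u_i\in X^c\cup Y^c$.

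The crux is a short combinatorial claim: some $u_i$ must actually lie in $A_t$. To see this, encode the decomposition by the word $\sigma_0\sigma_1\cdots\sigma_k$ over $\{S,E\}$, where $\sigma_0:=S$, $\sigma_k:=E$, and for $1\le j\le k-1$ we let $\sigma_j$ record whether the first vertex of $u_{j+1}$ is immediately South or immediately East of the last vertex of $u_j$. Unpacking the definition of SW-subsegment shows that $u_i\in A_t$ is equivalent to $\sigma_{i-1}=S$ and $\sigma_i=E$. Letting $i$ be the smallest index with $\sigma_i=E$ (which exists since $\sigma_k=E$ and satisfies $i\ge 1$ since $\sigma_0=S$) produces such a pair, so $u_i\in A_t$.

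To finish, I will verify the routine transitivity $A_t\subseteq A_s$ whenever $t\in A_s$ by a short case analysis according to whether $u$ starts (respectively, ends) at the same position as $t$ inside $s$. Combined with the claim above, this gives $u_i\in A_s$, and then $\Theta$-minimality of $X$ and $Y$ forces $u_i\in X\cap Y$, contradicting $u_i\in X^c\cup Y^c$. The main obstacle is identifying the right combinatorial encoding of a concatenation: once the sentinel-padded word $\sigma$ is written down, the existence of a distinguished SW-subsegment factor becomes immediate, and the rest of the argument is bookkeeping.
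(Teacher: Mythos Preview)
Your proof is correct, and the central combinatorial step is the same one the paper uses: in any concatenation $t=u_1\circ\cdots\circ u_k$ some factor $u_i$ lies in $A_t$ (the paper phrases this dually, that some factor lies in $K_t$). The paper, however, packages the argument through the transpose duality: it passes to $\lambda^{\tr}$, so that the complement of a $\Theta$-minimal set becomes a $\Theta^{\tr}$-maximal set, and then shows directly that the \emph{join} $\overline{X'\cup Y'}$ of two $\Theta^{\tr}$-maximal sets is again $\Theta^{\tr}$-maximal. Your approach avoids this detour entirely by invoking Lemma~\ref{Lemma_meet_in_bic} to write $X\wedge Y=(\,\overline{X^c\cup Y^c}\,)^c$ and verifying the $\Theta$-minimality condition $A_s\subseteq X\wedge Y$ directly in $\lambda$. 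This is a bit cleaner since it keeps everything on one side of the duality; the paper's route has the minor advantage of making the symmetry between $\Theta$-minimal/meet and $\Theta$-maximal/join explicit. Either way the crux is the same concatenation-factor observation, and your sentinel-word encoding makes that step particularly transparent.
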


\begin{proof}
Let $\Theta^{\tr}$ be the congruence on $\Bic(\lambda^{\tr})$ induced by the map $\eta:\Bic(\lambda^{\tr})\ra\GT(\lambda^{\tr})$ ({see Section~\ref{subsec_comb_paths_segments}}). Via the natural bijection on segments $\Seg(\lambda)\ra\Seg(\lambda^{\tr})$, the complement of a $\Theta$-minimal set $X$ in $\Bic(\lambda)$ is a $\Theta^{\tr}$-maximal element of $\Bic(\lambda^{\tr})$.

Let $X,Y$ be $\Theta$-minimal elements of $\Bic(\lambda)$, and let $X^{\pr}=\Seg(\lambda^{\tr})\setm X^{\tr},\ Y^{\pr}=\Seg(\lambda^{\tr})\setm Y^{\tr}$. Then $X^{\pr}$ and $Y^{\pr}$ are $\Theta^{\tr}$-maximal elements. The join $X^{\pr}\vee Y^{\pr}$ is equal to $\ov{X^{\pr}\cup Y^{\pr}}$. We claim that this set is $\Theta^{\tr}$-maximal. Let $s,t\in\Seg(\lambda^{\tr})$ such that $t\in K_s$ and $t\in\ov{X^{\pr}\cup Y^{\pr}}$. Then $t=t_1\circ\cdots\circ t_l$ where each $t_i$ is in $X^{\pr}\cup Y^{\pr}$. Then $t_i\in K_t$ for some $i$, which means $t_i\in K_s$. Since $t_i$ is in $X^{\pr}$ or $Y^{\pr}$ and both sets are $\Theta^{\tr}$-maximal, we have $s\in X^{\pr}\cup Y^{\pr}$. Consequently, the join of $X^{\pr}$ and $Y^{\pr}$ is $\Theta^{\tr}$-maximal, so the meet of $X$ and $Y$ is $\Theta$-minimal.
\end{proof}

\begin{proposition}\label{sub_quot_phi_eta}
The lattice $\GT(\lambda)$ is both a sublattice and a quotient lattice of $\Bic(\lambda)$. Moreover, given any facet ${F} \in \GT(\lambda)$ and any biclosed set $X \in \Bic(\lambda)$, one has $\eta\circ \phi(F) = F$ and $\phi\circ \eta(X) = X^{\downarrow}.$
\end{proposition}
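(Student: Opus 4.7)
The plan is to dispatch the four assertions of the proposition using the infrastructure already developed above. The quotient claim is essentially free: the map $\eta:\Bic(\lambda)\to\GT(\lambda)$ is a surjective lattice homomorphism (stated in the discussion preceding the proposition and established in \cite{mcconville:2015lattice}), so $\GT(\lambda)$ is isomorphic to the quotient $\Bic(\lambda)/\Theta$.

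For the sublattice claim, I would recall that $\phi$ is already an order embedding that preserves joins (from \cite[Section 8]{mcconville:2015lattice}), so the only missing ingredient is meet-preservation. Here Lemma~\ref{lem_sublattice} does the work: the image $\phi(\GT(\lambda))$ coincides with the set of $\Theta$-minimal biclosed sets, and the lemma says this set is closed under the meet in $\Bic(\lambda)$. Given $F,F'\in\GT(\lambda)$, set $M:=\phi(F)\wedge\phi(F')$, computed in $\Bic(\lambda)$; by the lemma $M=\phi(F'')$ for some $F''\in\GT(\lambda)$, and since $\phi$ is an order embedding, any $G\leq F,F'$ in $\GT(\lambda)$ satisfies $\phi(G)\leq M=\phi(F'')$, hence $G\leq F''$. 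Therefore $F''=F\wedge F'$ and $\phi$ is a sublattice embedding.

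For the two identities, the key observation will be that $\phi$ restricts to a \emph{bijection} between $\GT(\lambda)$ and the set of $\Theta$-minimal elements of $\Bic(\lambda)$. Injectivity is known, and each $\phi(F)=\bigvee_{p\in F}A_p$ is $\Theta$-minimal because a join of $\Theta$-minimal elements of the form $A_p$ is again $\Theta$-minimal (as noted before Lemma~\ref{lem_sublattice}). Surjectivity onto the $\Theta$-minima follows by counting: both $\GT(\lambda)$ and the collection of $\Theta$-minimal biclosed sets are in bijection with the set of $\Theta$-classes, via $\eta$ and via $X\mapsto X^{\downarrow}$ respectively. From this bijection, $\eta\circ\phi(F)=F$ because $\phi(F)$ is the unique $\Theta$-minimum in the class that $\eta$ sends to $F$. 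Likewise, $\phi\circ\eta(X)$ is the unique $\Theta$-minimum in $[X]_\Theta$, which is $X^{\downarrow}$ by the description of $X^{\downarrow}$ given above.

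The main obstacle is the sublattice assertion, and its core has already been isolated in Lemma~\ref{lem_sublattice}; once meet-closure of the image of $\phi$ is available, everything else becomes a clean bookkeeping argument in terms of the lattice congruence $\Theta$ and the bijection between $\Theta$-classes and $\Theta$-minimal sets.
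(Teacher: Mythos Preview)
Your treatment of the quotient and sublattice assertions is correct and matches the paper's approach: both lean on Lemma~\ref{lem_sublattice} together with the properties of $\eta$ and $\phi$ imported from \cite{mcconville:2015lattice}. Your counting argument that $\phi$ surjects onto the set of $\Theta$-minimal biclosed sets is valid, and it makes the meet-preservation step pleasantly explicit.

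There is, however, a genuine gap in your derivation of the two identities. From the bijection $\phi:\GT(\lambda)\to\{\Theta\text{-minimal sets}\}$ you assert that ``$\phi(F)$ is the unique $\Theta$-minimum in the class that $\eta$ sends to $F$,'' but this is exactly the statement $\eta(\phi(F))=F$ you are trying to prove. Your counting argument only shows that $\eta\circ\phi$ is a \emph{bijection} of $\GT(\lambda)$ onto itself (in fact an order automorphism, since $\eta$ restricted to $\Theta$-minimal elements is an order isomorphism onto $\GT(\lambda)$ and $\phi$ is an order embedding); it does not identify which bijection. Nothing in the abstract setup rules out $\phi(F)$ landing in the $\Theta$-class of some $F'\neq F$. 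The same circularity appears in your argument for $\phi\circ\eta(X)=X^{\downarrow}$, which presupposes $\phi(\eta(X))\in[X]_\Theta$, i.e., the first identity applied to $\eta(X)$.

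The paper does not attempt an abstract argument here; it simply cites \cite[Claims~8.7, 8.8]{mcconville:2015lattice}, where the identities are checked directly from the explicit definitions of $\eta$ and $\phi$. Some concrete comparison between the construction $\phi(F)=\bigvee_{p\in F}A_p$ and the path-extension recipe defining $\eta$ is unavoidable. Once $\eta\circ\phi=\id$ is in hand by such means, your deduction of $\phi\circ\eta(X)=X^{\downarrow}$ from it (via $\phi(\eta(X))$ being $\Theta$-minimal and lying in $[X]_\Theta$) is then correct.
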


The first sentence of the proposition is a consequence of Lemma~\ref{lem_sublattice} and \cite[Theorem 8.12]{mcconville:2015lattice}. The remaining assertions follow from \cite[Claim 8.7, 8.8]{mcconville:2015lattice}.

\subsection{Canonical join complex}\label{subsec_gt_joinreps}

The main result in this section is Theorem~\ref{thm_NF_CJ}, which states that the canonical join complex of the Grid-Tamari order is isomorphic to the nonfriendly complex.

\begin{lemma}\label{cl_join-irreducible}
For $s\in\Seg(\lambda)$, $\eta(A_s)$ is join-irreducible.
\end{lemma}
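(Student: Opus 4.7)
The plan is to exploit the lattice-quotient structure $\eta:\Bic(\lambda)\twoheadrightarrow\GT(\lambda)$. By Lemma~\ref{lem_lattice_quotient_cover}, if $A_s$ is $\Theta$-minimal in $\Bic(\lambda)$, then the lower covers of $\eta(A_s)$ in $\GT(\lambda)$ are in bijection with those of $A_s$ in $\Bic(\lambda)$. Thus it suffices to prove that $A_s$ is biclosed, $\Theta$-minimal, and has a unique lower cover in $\Bic(\lambda)$, which I identify as $A_s\setm\{s\}$.

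I first verify $A_s$ is biclosed. Closure follows because if $u_1,u_2\in A_s$ and $u_1\circ u_2$ is defined, then the second subsegment necessarily starts immediately after the first within $s$, and the resulting concatenation is a subsegment of $s$ inheriting the start condition from $u_1$ and the end condition from $u_2$. For closure of the complement, if $u\in A_s$ is written as $u=s[i..j]=s[i..m]\circ s[m+1..j]$, the step of $s$ at the split point is either east or south: if east, then $s$ leaves $s[i..m]$ to the east and hence $s[i..m]\in A_s$; if south, then $s$ enters $s[m+1..j]$ from the north and hence $s[m+1..j]\in A_s$. For $\Theta$-minimality, transitivity of the SW-subsegment relation (both start and end conditions carry along nested inclusions) gives $A_t\subseteq A_s$ for every $t\in A_s$, so $A_s^{\downarrow}=A_s$.

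Existence of the claimed lower cover: $A_s\setm\{s\}$ is biclosed. It is closed because $s$ itself cannot be decomposed as $u_1\circ u_2$ with both $u_i\in A_s$, since the splitting step of $s$ would need to go simultaneously east (so $s$ leaves $u_1$ to the east) and south (so $s$ enters $u_2$ from the north). Its complement $(\Seg(\lambda)\setm A_s)\cup\{s\}$ is closed because any concatenation $s\circ u$ or $u\circ s$ properly extends $s$ and hence lies outside $A_s$. Since $\Bic(\lambda)$ is graded by cardinality, $A_s\setm\{s\}$ is a lower cover of $A_s$.

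The main obstacle is uniqueness. For any $t\in A_s$ with $t\neq s$, decompose $s=s_1\circ t\circ s_2$ along $s$; at least one of $s_1,s_2$ is nonempty. If $s_2\neq\emptyset$, the SW-condition on $t$ forces the first vertex of $s_2$ to lie immediately east of $t$'s last vertex, so $s$ enters $s_2$ from the west and $s_2\notin A_s$; meanwhile $t\circ s_2\in A_s$, inheriting $t$'s start-condition and ending where $s$ ends. Thus $t$ and $s_2$ lie in the complement of $A_s\setm\{t\}$, yet $t\circ s_2\in A_s\setm\{t\}$ does not, violating closure of the complement. The case $s_1\neq\emptyset$, $s_2=\emptyset$ is symmetric, using $s_1\notin A_s$ and $s_1\circ t=s\in A_s\setm\{t\}$. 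Hence $A_s\setm\{t\}$ is not biclosed for any $t\neq s$, so $A_s\setm\{s\}$ is the unique lower cover and $\eta(A_s)$ is join-irreducible.
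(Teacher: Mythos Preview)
Your proof is correct, but it takes a different route from the paper's. The paper's argument is a two-line transfer via the sublattice section $\phi$: if $\eta(A_s)=F\vee F'$, then applying $\phi$ and using $\phi\circ\eta(A_s)=A_s$ (from Proposition~\ref{sub_quot_phi_eta}) together with the fact that $\phi$ preserves joins gives $A_s=\phi(F)\vee\phi(F')$; since $A_s$ is join-irreducible in $\Bic(\lambda)$ (cited from \cite{mcconville:2015lattice}), one factor equals $A_s$, and applying $\eta$ recovers $\eta(A_s)=F$. You instead use the cover-bijection of Lemma~\ref{lem_lattice_quotient_cover} for $\Theta$-minimal elements, and then do the honest work of verifying directly that $A_s$ is biclosed, $\Theta$-minimal, and has $A_s\setm\{s\}$ as its \emph{unique} lower cover in $\Bic(\lambda)$. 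Your approach is more self-contained---you never invoke the sublattice embedding $\phi$ or the join-irreducibility of $A_s$ as a black box---and it has the side benefit of explicitly identifying $A_s\setm\{s\}$ as the unique lower cover, which the paper needs separately in Lemma~\ref{cl_join-irreducible_label}. The paper's approach is shorter given the machinery already in place and highlights the role of $\GT(\lambda)$ as a genuine sublattice of $\Bic(\lambda)$.
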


\begin{proof}
For $s\in\Seg(\lambda)$, if $\eta(A_s)=F\vee F^{\pr}$, then by Proposition~\ref{sub_quot_phi_eta}
$$A_s=\phi\circ\eta(A_s)=\phi(F\vee F^{\pr})=\phi(F)\vee\phi(F^{\pr}).$$
Since $A_s$ is join-irreducible in $\Bic(\lambda)$, we deduce that $A_s=\phi(F)$ (or $A_s=\phi(F^{\pr})$), so $\eta(A_s)=F$.  Hence, $\eta(A_s)$ is join-irreducible.
\end{proof}

Let $f:\Seg(\lambda)\ra \JI(\GT(\lambda))$ where $f(s)=\eta(A_s)$.

\begin{lemma}\label{cl_join-irreducible_bijection}
The function $f$ is a bijection.
\end{lemma}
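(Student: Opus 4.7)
The plan is to establish the injectivity and surjectivity of $f$ separately, leveraging the interplay between $\phi$, $\eta$, and the $\downarrow$-operation from Proposition~\ref{sub_quot_phi_eta}.

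For injectivity, suppose $f(s)=f(t)$, i.e., $\eta(A_s)=\eta(A_t)$. Applying $\phi$ to both sides, Proposition~\ref{sub_quot_phi_eta} gives $A_s^{\downarrow}=A_t^{\downarrow}$. The crucial step is to verify $A_s^{\downarrow}=A_s$, i.e., $A_u\subseteq A_s$ whenever $u\in A_s$. This is the transitivity of the SW-subsegment relation, which follows from a direct case analysis according to whether $u$ begins or ends $s$ and whether $v\in A_u$ begins or ends $u$; the key observation is that $u$ sits inside $s$ as a contiguous subsegment, so the vertex of $s$ immediately preceding the start (resp.\ following the end) of $v$ coincides with the corresponding vertex of $u$. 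Once $A_s^{\downarrow}=A_s$, the equality $A_s=A_t$ together with the fact that $s$ is the unique maximal element of $A_s$ under segment containment gives $s=t$.

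For surjectivity, let $j\in\JI(\GT(\lambda))$ with unique lower cover $j_*$. By Proposition~\ref{sub_quot_phi_eta}, $\phi(j)$ is the minimum biclosed set in its $\Theta$-class, so by Lemma~\ref{lem_lattice_quotient_cover}, the lower covers of $\phi(j)$ in $\Bic(\lambda)$ are in bijection with those of $j$ in $\GT(\lambda)$. Hence $\phi(j)$ has a unique lower cover $Y$ in $\Bic(\lambda)$ and is therefore join-irreducible there. Since $\Bic(\lambda)$ is graded by cardinality (Theorem~\ref{thm_bic_prop}), we may write $\phi(j)=Y\cup\{s\}$ for a unique segment $s$.

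I then claim $\phi(j)=A_s$, which implies $j=\eta(\phi(j))=\eta(A_s)=f(s)$. First, $A_s$ is biclosed: no two distinct SW-subsegments of $s$ can be concatenable (the East-exit constraint on the earlier one clashes with the North-entry constraint on the later one at an adjacent position), so $A_s$ is closed; and a parallel analysis of the possible transition step between $u_1$ and $u_2$ inside a hypothetical $u_1\circ u_2\in A_s$ shows that one of $u_1,u_2$ must itself lie in $A_s$, proving the complement is closed. Second, since $\phi(j)$ is $\Theta$-minimum we have $\phi(j)=\phi(j)^{\downarrow}$, so every $u\in\phi(j)$ satisfies $A_u\subseteq\phi(j)$; in particular, $A_s\subseteq\phi(j)$. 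Consequently $A_s\vee Y=\phi(j)$ in $\Bic(\lambda)$, and the join-irreducibility of $\phi(j)$ forces $A_s=\phi(j)$ since $Y\subsetneq\phi(j)$. The main obstacle is the combinatorial verification that $A_s$ is biclosed and that the SW-subsegment relation is transitive; once these are in place, the lattice-theoretic machinery closes the argument cleanly.
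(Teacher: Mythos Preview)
Your proof is correct. The injectivity argument is essentially the paper's, though you make explicit the transitivity of the SW-subsegment relation (equivalently, $A_s^{\downarrow}=A_s$), which the paper invokes silently when writing $A_s=\phi\circ\eta(A_s)$.

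For surjectivity the two arguments genuinely diverge. The paper uses the explicit formula $\phi(F)=\bigvee_{p\in F}A_p$: applying $\eta$ and the identity $\eta\circ\phi=\id$ gives $F=\bigvee_{p\in F}\eta(A_p)$, and join-irreducibility of $F$ in $\GT(\lambda)$ immediately singles out some $p\in F$ with $F=\eta(A_p)=\eta(A_s)$ for $s$ the largest SW-subsegment of $p$. Your route instead \emph{lifts} join-irreducibility to $\Bic(\lambda)$ via Lemma~\ref{lem_lattice_quotient_cover}, identifies the unique segment $s$ added in the cover relation $Y\lessdot\phi(j)$, verifies directly that $A_s$ is biclosed, and uses join-irreducibility of $\phi(j)$ in $\Bic(\lambda)$ together with $A_s\subseteq\phi(j)$ to force $\phi(j)=A_s$. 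The paper's approach is shorter and exploits the concrete description of $\phi$; your approach is more structural, relying only on the quotient relationship and the $\Theta$-minimality of $\phi(j)$, and in the process re-derives from scratch that $A_s\in\Bic(\lambda)$ (a fact the paper imports from \cite{mcconville:2015lattice}).
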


\begin{proof}
Clearly $f$ is injective, since if $\eta(A_s)=\eta(A_t)$ for some $s,t\in S$, then $A_s=\phi\circ\eta(A_s)=\phi\circ\eta(A_t)=A_t$ by Proposition~\ref{sub_quot_phi_eta}.

Let $F$ be a join-irreducible of $\GT(\lambda)$.  Then $\phi(F)=\bigvee_{p\in F}A_p$, so by Proposition~\ref{sub_quot_phi_eta} we have
$$F=\eta\circ\phi(F)=\eta\left(\bigvee_{p\in F} A_p\right)=\bigvee_{p\in F}\eta(A_p).$$
Since $F$ is join-irreducible, $F=\eta(A_p)$ for some $p\in F$.  If $s$ is the largest SW-subsegment of $p$, then $F=\eta(A_s)=f(s)$, as desired.
\end{proof}

\begin{theorem}\label{thm_NF_CJ}
For $X\subseteq\Seg(\lambda)$, the set $X$ is a face of the nonfriendly complex if and only if there exists $F\in\GT(\lambda)$ such that $F=\bigvee_{s\in X}f(s)$ is the canonical join representation of $F$.
\end{theorem}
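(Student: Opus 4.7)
The plan is to reduce the theorem to its pairwise case by exploiting flagness of both complexes, and then to analyze the two-element case through the sublattice embedding $\phi: \GT(\lambda) \hookrightarrow \Bic(\lambda)$ of Proposition~\ref{sub_quot_phi_eta}.

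First, I would observe that both complexes in question are flag. The nonfriendly complex is flag directly from its definition, since membership of a face is characterized by a pairwise condition on segments. On the other side, since $\GT(\lambda)$ is congruence-uniform \cite{mcconville:2015lattice}, hence semidistributive, its canonical join complex is flag by Barnard's theorem \cite{barnard:2016canonical}. Combined with the vertex bijection $f$ provided by Lemmas~\ref{cl_join-irreducible} and \ref{cl_join-irreducible_bijection}, these two flagness facts reduce the theorem to the following pairwise statement: for distinct $s, t \in \Seg(\lambda)$, the pair $\{s, t\}$ is nonfriendly if and only if $\{f(s), f(t)\}$ is the canonical join representation of $f(s) \vee f(t)$ in $\GT(\lambda)$.

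For the pairwise statement, my plan is to work inside $\Bic(\lambda)$, where $\phi(f(s)) = A_s$ and the join $A_s \vee A_t$ equals the closure $\overline{A_s \cup A_t}$. By Theorem~\ref{thm_bic_prop}(3), the only new segments introduced by this closure are concatenations $r_1 \circ r_2$ with factors drawn from $A_s \cup A_t$. The friendly condition for $s, t$ along a common subsegment $u$ manifests as a combinatorial obstruction to minimality of the join representation: in the friendly case, one can identify a proper SW-subsegment $s' \subsetneq s$ (or an analogous $t' \subsetneq t$) determined by $u$ such that $A_{s'} \vee A_t = A_s \vee A_t$; transferring this refinement back to $\GT(\lambda)$ via $\phi$ exhibits that $\{f(s'), f(t)\}$ strictly refines $\{f(s), f(t)\}$ in the partial order on irredundant join representations, so $\{f(s), f(t)\}$ cannot be canonical. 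Conversely, when $s, t$ are nonfriendly, I would argue that any join-irreducible $A_r$ with $A_r \leq A_s \vee A_t$ either coincides with $A_s$ or $A_t$, or sits strictly below one of them in a way that fails to regenerate the full join when paired with only the other, ruling out a smaller representation.

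The main obstacle will be the careful combinatorial analysis underlying the pairwise case: identifying the explicit witness $s'$ in the friendly direction via case analysis on how $s$ and $t$ start, end, enter, or leave the common subsegment $u$, and verifying minimality in the nonfriendly direction by a close examination of the interaction between closures and SW-/NE-subsegment structure in $\Bic(\lambda)$. A secondary technical point is verifying that the canonical join representation of an element of $\phi(\GT(\lambda))$ in $\Bic(\lambda)$ descends to the canonical join representation in $\GT(\lambda)$; this should follow from the fact that $\phi$ is a lattice embedding that maps $\JI(\GT(\lambda))$ bijectively onto the join-irreducibles $\{A_s : s \in \Seg(\lambda)\}$ of $\Bic(\lambda)$, together with Proposition~\ref{sub_quot_phi_eta}.
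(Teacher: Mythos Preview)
Your approach is essentially that of the paper: reduce to the pairwise case via flagness of both complexes (the paper cites the same Barnard result), then split into friendly versus nonfriendly. In the friendly direction the paper does exactly what you sketch, decomposing $s = s_1 \circ u \circ s_2$ and exhibiting the refinement $\eta(A_s)\vee\eta(A_t)=\eta(A_{s_1})\vee\eta(A_{s_2})\vee\eta(A_t)$; your remark about transferring canonicity between $\Bic(\lambda)$ and $\GT(\lambda)$ via the embedding $\phi$ is correct and is used implicitly.

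The one place your plan diverges is the nonfriendly direction. You propose to show that every join-irreducible $A_r\leq A_s\vee A_t$ lies \emph{below} $A_s$ or $A_t$; but canonicity requires the opposite inequality---for any competing irredundant representation $\bigvee_i A_{u_i}$ you need some $A_{u_i}$ \emph{above} $A_s$ (and another above $A_t$). Your claim as stated is also not obviously true once concatenations mixing pieces of $s$ and $t$ enter the closure $\ov{A_s\cup A_t}$. The paper sidesteps this by taking an arbitrary irredundant representation $A_s\vee A_t=\bigvee_i A_{u_i}$, writing $s=u_1'\circ\cdots\circ u_m'$ with each $u_j'$ in some $A_{u_i}$, locating a NE-subsegment $u'$ of $s$ among these factors, and then showing that the pieces of $u'$ lying in $A_t$ would force $s$ and $t$ to be friendly unless $s\in A_{u_i}$ for some $i$. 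This is a direct argument on a given competing representation rather than a classification of all join-irreducibles under the join, and it is what you should aim for.
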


\begin{figure}
$$\begin{array}{ccccccccccc}\includegraphics[scale=1.9]{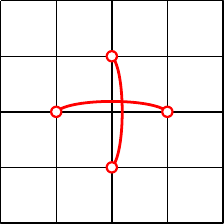} & \raisebox{.8in}{$\mapsto$} & \includegraphics[scale=1.9]{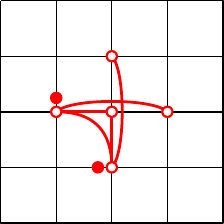} & \raisebox{.8in}{$\mapsto$} & \includegraphics[scale=.475]{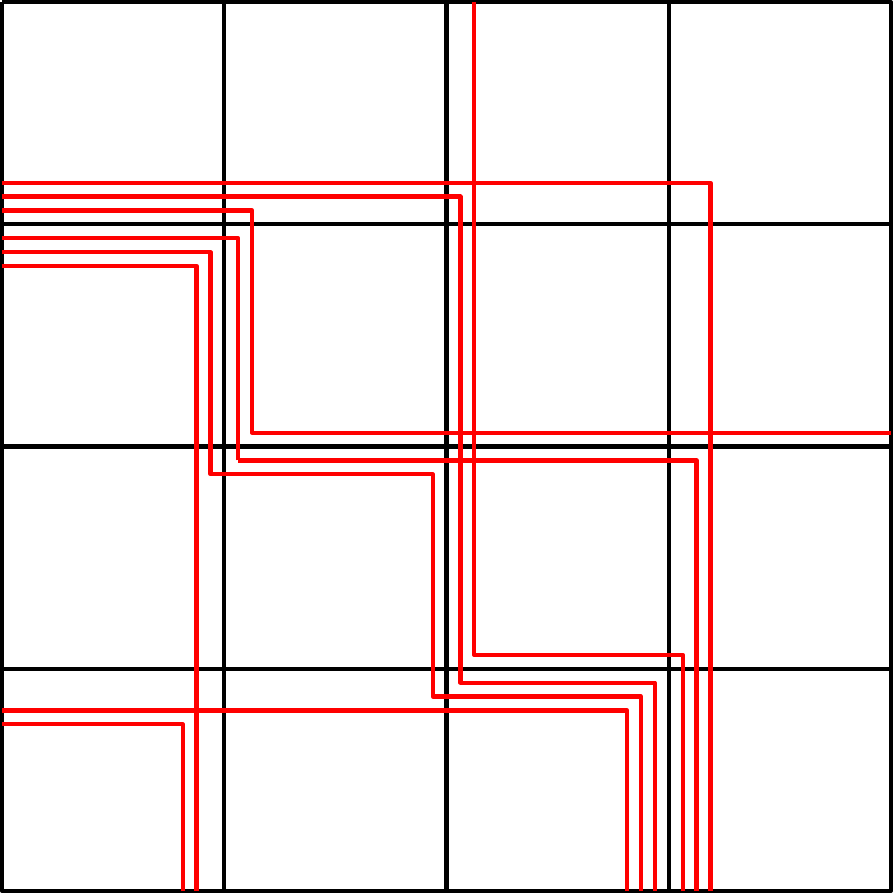}\\
X \in \Gamma^{NF}(\lambda) & \mapsto & \bigvee_{s \in X} A_s & \mapsto & \bigvee_{s \in X} f(s) \in \wtil{\Delta}^{NK}(\lambda) \end{array}$$
\caption{{A face of the nonfriendly complex and the corresponding face of the reduced nonkissing complex.}}
\label{NF_to_NK}
\end{figure}


\begin{proof}
Since the nonfriendly complex and the canonical join complex are both flag complexes, it suffices to prove the statement when $|X|\leq 2$.

The empty set is both a face of the nonfriendly complex and the canonical join representation of $F_0$, so the statement holds for $|X|=0$. The case $|X|=1$ was handled in Lemma~\ref{cl_join-irreducible_bijection} since the only elements in a lattice whose canonical join representation is itself are the join-irreducibles.

Let $X=\{s,t\}$, $s\neq t$. Assume $s$ and $t$ are friendly. Thus there exists a common subsegment $u$ of $s$ and $t$ along which $s$ and $t$ are friendly. We note that there may be many valid choices for $u$, which we will choose from arbitrarily.

Without loss of generality, we may assume $s$ either starts with $u$ or enters $u$ from the West, and $s$ either ends with $u$ or leaves $u$ to the South. Similarly, $t$ either starts with $u$ or enters $u$ from the North, and $t$ either ends with $u$ or leaves $u$ to the East. Divide $s$ into three segments $s=s_1\circ u\circ s_2$ where $s_1$ or $s_2$ may be an empty segment. Since $u$ is a SW-subsegment of $t$, we have $A_u\subseteq A_t$. Similarly, $s_1$ and $s_2$ are SW-subsegments of $s$, so $A_{s_i}\subseteq A_s$ for $i=1,2$. Hence,
$$A_s\vee A_t\subseteq (A_{s_1}\vee A_{s_2}\vee A_u)\vee A_t=A_{s_1}\vee A_{s_2}\vee A_t\subseteq A_s\vee A_t,$$
so we have $\eta(A_s)\vee\eta(A_t)=\eta(A_{s_1})\vee\eta(A_{s_2})\vee\eta(A_t)$. If both $s_1$ and $s_2$ are empty, then the join representation $\eta(A_s)\vee\eta(A_t)$ is redundant. Otherwise, since $\eta(A_{s_1})<\eta(A_s)$ and $\eta(A_{s_2})<\eta(A_s)$, we have found a join-refinement of $\eta(A_s)\vee\eta(A_t)$.

Now assume that $s$ and $t$ are nonfriendly. It is clear that neither segment is a SW-subsegment of the other, so $A_s\vee A_t$ is irredundant. Suppose we have another irredundant join representation so that $A_s\vee A_t=\bigvee_{i=1}^l A_{u_i}$ for some segments $u_1,\ldots,u_l$. We prove that $s\in A_{u_i}$ and $t\in A_{u_j}$ for some $i$ and $j$. This would imply that $\eta(A_s)\vee\eta(A_t)$ is a canonical join representation.

Suppose to the contrary that $s\notin A_{u_i}$ for any $i$. Then $s=u_1^{\pr}\circ\cdots\circ u_m^{\pr},\ m>1$ where each $u_i^{\pr}$ is in $A_{u_j}$ for some $j$ (depending on $i$). There exist indices $i\leq j$ such that if $u^{\pr}=u_i^{\pr}\circ\cdots\circ u_j^{\pr}$:
\begin{itemize}
\item $s\neq u^{\pr}$,
\item either $i=1$ or $s$ enters $u^{\pr}$ from the West, and
\item either $j=m$ or $s$ leaves $u^{\pr}$ to the South.
\end{itemize}
Since $u^{\pr}\in A_s\vee A_t$, we have $u^{\pr}=t_1\circ\cdots\circ t_k$ where each $t_i$ is a SW-subsegment of $s$ or $t$. By the hypotheses on $u^{\pr}$, either $t_1\in A_t$ or $t_k\in A_t$ must hold. Suppose $t_1\in A_t$. Choose $j$ maximal such that $t_1\circ\cdots\circ t_j$ is a SW-subsegment of $t$. If $j=k$, then $u^{\pr}$ is a SW-subsegment of $t$, and $s$ and $t$ are friendly along $u^{\pr}$. If $j<k$ then $t_{j+1}$ is a SW-subsegment of $s$, so $s$ leaves $t_j$ to the South while $t$ either ends at $t_j$ or leaves $t_j$ to the East. In either case, we conclude that $s$ and $t$ are friendly along $t_1\circ\cdots\circ t_j$. Since $s$ and $t$ were assumed to be nonfriendly, we have obtained a contradiction.
\end{proof}

Let $L$ be a semidistributive lattice. For any covering $x\lessdot y$ in $L$, there exists a unique join-irreducible $j$ such that $x\vee j=y$ and $x\wedge j=j_*$ \cite[Lemma 3.3]{barnard:2016canonical}. Furthermore, Barnard proved that $y=\bigvee A$ is a canonical join representation if and only if $A$ is the set of join-irreducibles $j$ such that there exists $x\lessdot y$ where $x\vee j=y$ and $x\wedge j=j_*$. We identity the join-irreducibles corresponding to covering relations in the Grid-Tamari order in the following claim.

\begin{lemma}\label{cl_join-irreducible_label}
Let $F$ and $F^{\pr}$ be adjacent facets of $\Delta^{NK}(\lambda)$.  If $F\stackrel{s}{\ra}F^{\pr}$, then $F\vee\eta(A_s)=F^{\pr}$ and $F\wedge\eta(A_s)=\eta(A_s)_*$.
\end{lemma}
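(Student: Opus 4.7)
Via the sublattice embedding $\phi \colon \GT(\lambda) \hookrightarrow \Bic(\lambda)$ (Lemma~\ref{lem_sublattice} and Proposition~\ref{sub_quot_phi_eta}), which preserves both joins and meets and is injective (with inverse $\eta$ on its image), it suffices to establish
$$\phi(F) \vee A_s = \phi(F') \quad \text{and} \quad \phi(F) \wedge A_s = \phi(\eta(A_s)_*)$$
in $\Bic(\lambda)$. Here I use that any SW-subsegment of a SW-subsegment of $s$ is again a SW-subsegment of $s$, so $(A_s)^{\downarrow} = A_s$ and hence $\phi(\eta(A_s)) = A_s$.

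For the join identity, one containment is immediate: by the flip convention $F \stackrel{s}{\ra} F'$, the path $q \in F'$ (with $F' \setm \{q\} = F \setm \{p\}$) enters $s$ from the North and leaves $s$ to the East, so $s \in A_q \subseteq \phi(F')$, and combined with $\phi(F) \subseteq \phi(F')$ (as $F \leq F'$) we obtain $\phi(F) \vee A_s \subseteq \phi(F')$. For the reverse, write $\phi(F') = \bigvee_{p' \in F \setm \{p\}} A_{p'} \vee A_q$; the first join is contained in $\phi(F)$, so it suffices to show $A_q \subseteq \phi(F) \vee A_s$. I would analyze each SW-subsegment $t$ of $q$ by cases: if $t$ lies in the region where $p$ and $q$ coincide, then $t \in A_p \subseteq \phi(F)$; otherwise $t$ meets $s$ nontrivially and decomposes as a concatenation of SW-subsegments of $p$ with subsegments in $A_s$, using the kissing-box geometry of $p$ and $q$ around $s$. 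Since $\Bic(\lambda)$ is closed under concatenation (Theorem~\ref{thm_bic_prop}(3)), these decompositions place $t$ in $\phi(F) \vee A_s$. I expect this case analysis to be the main technical obstacle of the proof.

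For the meet identity, the plan is to invoke Barnard's characterization of covers in semidistributive lattices. Since $\GT(\lambda)$ is a sublattice of the congruence-uniform lattice $\Bic(\lambda)$, it is semidistributive; hence \cite[Lemma~3.3]{barnard:2016canonical} supplies a unique join-irreducible $j$ with $F \vee j = F'$ and $F \wedge j = j_*$. The join identity shows $\eta(A_s)$ satisfies the first equation, and $\eta(A_s) \not\leq F$ (otherwise $F \vee \eta(A_s) = F \neq F'$), so $F \wedge \eta(A_s) < \eta(A_s)$, forcing $F \wedge \eta(A_s) \leq \eta(A_s)_*$ since $\eta(A_s)_*$ is the unique lower cover of the join-irreducible $\eta(A_s)$ (Lemma~\ref{cl_join-irreducible}). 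To obtain equality, I would verify directly that $\eta(A_s)_* \leq F$: by Lemma~\ref{lem_lattice_quotient_cover} this reduces to identifying a lower cover $X$ of $A_s$ in $\Bic(\lambda)$ with $\eta(X) = \eta(A_s)_*$ and checking $X \subseteq \phi(F)$. The geometric content of this last step is that each proper SW-subsegment of $s$ can be assembled from SW-subsegments of paths in $F$ via concatenation, which can be read off by tracing the SW-route that $p$ takes around $s$.
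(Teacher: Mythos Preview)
Your proof outline has the right ingredients, but you have made the join identity far harder than necessary, and the case analysis you propose is both the weakest link and entirely avoidable.

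The paper's argument proceeds in the opposite order and exploits the covering relation $F\lessdot F'$ in $\GT(\lambda)$ directly. For the meet, the key observation is that $s\notin\phi(F)$ while every proper SW-subsegment of $s$ \emph{is} in $\phi(F)$ (this follows because $\phi(F')$ is $\Theta$-minimal, so $A_s\subseteq\phi(F')$, and each proper $t\in A_s$ has $A_t\subseteq\phi(F')\setminus\{s\}$, hence $t\in(\phi(F')\setminus\{s\})^{\downarrow}=\phi(F)$). Since $A_s\setminus\{s\}$ is biclosed, one gets $\phi(F)\wedge A_s=A_s\setminus\{s\}$ immediately, and applying $\eta$ together with Lemma~\ref{lem_lattice_quotient_cover} gives $F\wedge\eta(A_s)=\eta(A_s)_*$. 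The join then falls out for free: $F$ and $\eta(A_s)$ are now known to be incomparable, so $F<F\vee\eta(A_s)\leq F'$, and the cover $F\lessdot F'$ forces equality.

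By contrast, your plan to establish $\phi(F)\vee A_s=\phi(F')$ directly in $\Bic(\lambda)$ via a decomposition of each SW-subsegment of $q$ is risky. The sentence ``if $t$ lies in the region where $p$ and $q$ coincide, then $t\in A_p$'' presumes a structural relationship between $p$ and $q$ that does not hold in general: the replacement path $q$ is determined by \emph{all} of $F\setminus\{p\}$, not just by $p$, and $p$ and $q$ need not coincide on any simply-described region outside $s$. Making this case analysis rigorous would require a detailed description of the flip that the paper never needs. Similarly, your meet argument (tracing SW-routes of $p$ around $s$) gestures at the same combinatorics, when the one-line observation $A_s\setminus\{s\}\subseteq\phi(F)$ does the job.

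In short: reverse the order, prove the meet first by computing $\phi(F)\wedge A_s=A_s\setminus\{s\}$, and let the covering relation hand you the join.
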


\begin{proof}
If $F\stackrel{s}{\ra}F^{\pr}$, then all proper SW-subsegments of $s$ are in $\phi(F)$, but not $s$ itself. Since $A_s\setm\{s\}$ is biclosed, we have $\phi(F)\wedge A_s=A_s\setm\{s\}$. Since $A_s$ is the minimum biclosed set whose image under $\eta$ is equal to $\eta(A_s)$, we have $\eta(A_s\setm\{s\})<\eta(A_s)$. By Lemma~\ref{lem_lattice_quotient_cover}, $\eta(A_s)$ covers $\eta(A_s\setm\{s\})$. Since $\eta(A_s)$ is join-irreducible, we have $\eta(A_s\setm\{s\})=\eta(A_s)_*$. Putting this together, we have 
\begin{align*}
F\wedge\eta(A_s) &= \eta\circ\phi(F)\wedge\eta(A_s)\\
&=\eta(\phi(F)\wedge A_s)\\
&=\eta(A_s\setm\{s\})\\
&=\eta(A_s)_*.
\end{align*}

Since $F$ and $\eta(A_s)$ are incomparable, $F<F\vee\eta(A_s)$. But $A_s\subseteq\phi(F^{\pr})$, so $\eta(A_s)\leq F^{\pr}$. This implies that $F\vee\eta(A_s)\leq F^{\pr}$. As $F^{\pr}$ covers $F$, we conclude that $F\vee\eta(A_s)=F^{\pr}$, as desired.
\end{proof}

Given an element $F\in\GT(\lambda)$, we define the \emph{descent set} $\Des(F)$ to be the set of segments $s$ such that there exists a facet $F^{\pr}$ adjacent to $F$ with $F^{\pr}\stackrel{s}{\ra}F$. Dually, the \emph{ascent set} $\Asc(F)$ is the set of segments $s$ such that $F\stackrel{s}{\ra}F^{\pr}$ for some facet $F^{\pr}$. By the discussion before Lemma~\ref{cl_join-irreducible_label}, we obtain the following corollary to Theorem~\ref{thm_NF_CJ}.

\begin{corollary}\label{cor_lower_cover}
For $X\subseteq\Seg(\lambda)$, there exists $F\in\GT(\lambda)$ such that $X=\Des(F)$ if and only if $X$ is a face of the nonfriendly complex.
\end{corollary}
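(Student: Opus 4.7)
The plan is to derive the corollary directly from Theorem~\ref{thm_NF_CJ} together with Lemma~\ref{cl_join-irreducible_label} and Barnard's characterization of canonical join representations in semidistributive lattices (recalled in the paragraph just before Lemma~\ref{cl_join-irreducible_label}). Since $\GT(\lambda)$ is congruence-uniform and therefore semidistributive, each element $F \in \GT(\lambda)$ has a canonical join representation, and the join-irreducibles appearing in it are in bijection with the lower covers of $F$: to a covering $F' \lessdot F$ one associates the unique join-irreducible $j$ satisfying $F' \vee j = F$ and $F' \wedge j = j_*$.

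Next, I would invoke Lemma~\ref{cl_join-irreducible_label}: for any covering $F' \stackrel{s}{\ra} F$ in $\GT(\lambda)$, the join-irreducible attached to this covering is precisely $\eta(A_s) = f(s)$. Consequently the bijection in the previous paragraph sends the set of lower covers of $F$ to the set $\{f(s) : s \in \Des(F)\}$, and the canonical join representation of $F$ is
\[
F \;=\; \bigvee_{s \in \Des(F)} f(s).
\]
Combining this with the injectivity of $f$ established in Lemma~\ref{cl_join-irreducible_bijection}, we see that $X = \Des(F)$ if and only if $F = \bigvee_{s\in X} f(s)$ is the canonical join representation of $F$.

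Finally, I would apply Theorem~\ref{thm_NF_CJ}, which asserts that the sets $X \subseteq \Seg(\lambda)$ that arise as the indexing set of a canonical join representation in $\GT(\lambda)$ are exactly the faces of the nonfriendly complex. Both implications of the corollary then follow: given $F \in \GT(\lambda)$, the set $\Des(F)$ is a face of the nonfriendly complex; and conversely, given a face $X$ of the nonfriendly complex, Theorem~\ref{thm_NF_CJ} produces an $F$ whose canonical join representation is indexed by $X$, and by the bijective correspondence above this $F$ satisfies $\Des(F) = X$.

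There is no real obstacle, as all of the work has been front-loaded into Theorem~\ref{thm_NF_CJ} and Lemma~\ref{cl_join-irreducible_label}; the only subtlety is invoking Barnard's theorem correctly to pass between ``lower covers of $F$'' and ``canonical joinands of $F$,'' which is exactly the content of the discussion preceding Lemma~\ref{cl_join-irreducible_label}.
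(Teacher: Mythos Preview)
Your proposal is correct and follows exactly the approach the paper intends: the paper derives the corollary in one sentence by invoking ``the discussion before Lemma~\ref{cl_join-irreducible_label}'' (Barnard's characterization) together with Theorem~\ref{thm_NF_CJ}, and your argument simply spells this out in detail, using Lemma~\ref{cl_join-irreducible_label} to identify the canonical joinands of $F$ with $\{f(s): s\in\Des(F)\}$.
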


\subsection{Lattice congruences}\label{subsec_gt_cong}

{For this section, we partially order the set of segments by inclusion: $s\leq t$ if $s\subseteq t$.}

Since $\GT(\lambda)$ is a congruence-uniform lattice, the join-irreducibles of $\GT(\lambda)$ are in bijection with the join-irreducibles of $\Con(\GT(\lambda))$ via the map $j\mapsto\con(j_*,j)$ for $j\in \JI(\GT(\lambda))$.  Composing with $f$ defines a bijection between $\Seg(\lambda)$ and join-irreducibles of $\Con(\GT(\lambda))$. In fact, we have the following theorem.

\begin{theorem}\label{thm_congruences}
The poset $\Con(\GT(\lambda))$ is isomorphic to the lattice of order filters of $\Seg(\lambda)$.
\end{theorem}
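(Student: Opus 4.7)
The plan is to combine the congruence-uniformity of $\GT(\lambda)$ with the fundamental theorem of finite distributive lattices (FTFDL), and then identify the induced order on join-irreducible congruences with reverse inclusion on segments. First, since $\GT(\lambda)$ is congruence-uniform (Section~\ref{subsec_gt_biclosed}), the map $j\mapsto\con(j_*,j)$ is a bijection $\JI(\GT(\lambda))\to\JI(\Con(\GT(\lambda)))$. Composing with the bijection $f:\Seg(\lambda)\to\JI(\GT(\lambda))$ of Lemma~\ref{cl_join-irreducible_bijection} yields a bijection $\kappa:\Seg(\lambda)\to\JI(\Con(\GT(\lambda)))$ given by $\kappa(s)=\con(f(s)_*,f(s))$. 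Since $\Con(L)$ is a finite distributive lattice for any finite lattice $L$, FTFDL identifies $\Con(\GT(\lambda))$ with the lattice of order ideals of $\JI(\Con(\GT(\lambda)))$ in its induced partial order, inherited from the refinement order on $\Con(\GT(\lambda))$.

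The crux of the proof is the combinatorial claim that, under $\kappa$, this induced order on $\Seg(\lambda)$ is the opposite of inclusion: for $s,t\in\Seg(\lambda)$,
\[\con(f(s)_*,f(s))\le\con(f(t)_*,f(t)) \iff s\supseteq t.\]
Granting this, $\JI(\Con(\GT(\lambda)))\cong(\Seg(\lambda),\supseteq)$ as posets, so $\Con(\GT(\lambda))\cong J((\Seg(\lambda),\supseteq))$. The order ideals of $(\Seg(\lambda),\supseteq)$ are precisely the order filters of $(\Seg(\lambda),\subseteq)$, completing the proof.

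The main obstacle is the combinatorial equivalence above. For $(\Leftarrow)$, given $t\subseteq s$ I would show that any congruence collapsing the canonical $t$-cover $f(t)_*\lessdot f(t)$ also collapses the canonical $s$-cover, by constructing a polygon (or short sequence of polygons) in $\GT(\lambda)$ whose opposite sides are labeled by $s$ and $t$. The polygon is built from the biclosed-set data using the facts that $f(s)=\eta(A_s)$ and $f(s)_*=\eta(A_s\setm\{s\})$ (Lemma~\ref{cl_join-irreducible_label}) together with the closure operation of Theorem~\ref{thm_bic_prop}; the canonical join characterization in Theorem~\ref{thm_NF_CJ} provides the combinatorial control needed to verify the polygons live in $\GT(\lambda)$. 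For $(\Rightarrow)$, if $t\not\subseteq s$, I would use $\con(f(s)_*,f(s))$ itself as a witness congruence and check via Proposition~\ref{sub_quot_phi_eta} that its equivalence classes do not identify $f(t)_*$ with $f(t)$. A technical subtlety is that a general subsegment $t\subseteq s$ need not be a SW- or NE-subsegment of $s$, so the polygon construction in $(\Leftarrow)$ may require first decomposing $t$ into such pieces using the wide-set framework of Section~\ref{subsec_transitive}, and then chaining the corresponding local forcings together.
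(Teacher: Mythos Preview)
Your overall framework is right and matches the paper: use congruence-uniformity to get the bijection $\kappa$, invoke FTFDL, and reduce to identifying the induced order on $\JI(\Con(\GT(\lambda)))$ with reverse inclusion on segments. For the direction $(\Leftarrow)$, your polygon-forcing idea is exactly what the paper does: Lemma~\ref{cl_end_contraction} builds an explicit hexagon in $\Bic(\lambda)$ from $X=\ov{A_s\cup A_{s'}}$ when $t=s\circ s'$, which descends to a pentagon in $\GT(\lambda)$ and forces the $t$-cover to contract whenever the $s$-cover does. The extension to an arbitrary subsegment $s\subseteq t$ is handled, as you anticipate, by a two-step chaining (first to an initial subsegment of $t$, then to $t$).

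The genuine gap is in your $(\Rightarrow)$ direction. First, the witness you propose points the wrong way: checking that $\con(f(s)_*,f(s))$ does not identify $f(t)_*$ with $f(t)$ would yield $\con(f(t)_*,f(t))\not\le\con(f(s)_*,f(s))$, whereas what you need is $\con(f(s)_*,f(s))\not\le\con(f(t)_*,f(t))$. Second, and more seriously, even after fixing the direction you have no independent description of the congruence $\con(f(s)_*,f(s))$ to check against; Proposition~\ref{sub_quot_phi_eta} only tells you $\eta\circ\phi=\id$ and $\phi\circ\eta=(\cdot)^{\downarrow}$, which says nothing about which covers $\con(f(s)_*,f(s))$ contracts. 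The paper supplies this missing ingredient by \emph{constructing} an explicit congruence $\Theta_s$ on $\Bic(\lambda)$ via the operator $X\mapsto X^{\downarrow s}:=\ov{X^{\downarrow}\setminus S_{\ge s}}$, verifying that it is a lattice congruence coarser than $\Theta$, and observing that in the quotient $\GT(\lambda)$ it contracts exactly the covers labeled by segments containing $s$. Combined with $(\Leftarrow)$, this pins down $\con(f(s)_*,f(s))=\Theta_s$ and yields $(\Rightarrow)$ immediately. Without such an explicit construction, your argument for $(\Rightarrow)$ is circular.
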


\begin{proof}[Proof of Theorem~\ref{thm_congruences}]By Lemma \ref{cl_join-irreducible_label}, if $F\stackrel{s}{\ra}F^{\pr}$, then $F\equiv F^{\pr}\mod\con(\eta(A_s)_*,\eta(A_s))$ and $\eta(A_s)_*\equiv\eta(A_s)\mod\con(F,F^{\pr})$.


For $X\in\Bic(\lambda)$, define $X^{\downarrow s}=\ov{X^{\downarrow}-S_{\geq s}}$, where $S_{\geq s}$ is the set of segments containing $s$.  It is straight-forward to check that $X^{\downarrow s}$ is biclosed.  Moreover the relation $X\equiv Y\mod\Theta_s$ if $X^{\downarrow s}=Y^{\downarrow s}$ is a lattice congruence of $\Bic(\lambda)$ coarser than $\Theta$.  Since $\GT(\lambda)$ is isomorphic to $\Bic(\lambda)/\Theta$, this congruence decends to a lattice congruence on $\GT(\lambda)$.  From the discussion following Lemmas \ref{cl_join-irreducible_bijection} and \ref{cl_join-irreducible_label}, the congruence $\Theta_s$ contracts exactly those covering relations in $\GT(\lambda)$ labelled by a segment $t$ containing $s$.

To complete the proof of Theorem \ref{thm_congruences}, it remains to show that $\con(\eta(A_s)_*,\eta(A_s))=\Theta_s$.

\begin{lemma}\label{cl_end_contraction}
Let $s,t$ be segments such that $s$ is an initial or terminal subsegment of $t$.  Then $\eta(A_t)_*\equiv\eta(A_t)\mod\con(\eta(A_s)_*,\eta(A_s))$.
\end{lemma}

\begin{proof}
We assume $t=s\circ s^{\pr}$ for some segment $s^{\pr}$.  The case $t=s^{\pr}\circ s$ is similar.

Let $X=\ov{A_s\cup A_{s^{\pr}}}$.  Then $X$ consists of segments that can be decomposed as a terminal SW-subsegment of $s$ and an initial SW-subsegment of $s^{\pr}$.  From this observation, we deduce that the sets
$$X-\{s\},X-\{s^{\pr}\},X-\{s,t\},X-\{s^{\pr},t\},X-\{s,t,s^{\pr}\}$$
are all biclosed.  Moreover, as $X$ constains all SW-subsegments of $s,s^{\pr}$ and $t$, only one of these covering relations is contracted by $\Theta$.  That is, this hexagonal subposet of $\Bic(S)$ is mapped to a pentagonal subposet of $\GT(\lambda)$ under $\eta$.  In particular, there are covering relations $(Y,Z),(Y^{\pr},Z^{\pr})$ in $\Bic(S)$ not contracted by $\Theta$ and labelled $s,t$ respectively such that $Y^{\pr}\equiv Z^{\pr}\mod\con(Y,Z)$.

Then $\eta(Y)\stackrel{s}{\ra}\eta(Z)),\ \eta(Y^{\pr})\stackrel{t}{\ra}\eta(Z^{\pr})$ are covering relations of $\GT(\lambda)$ since $(Y,Z)$ and $(Y^{\pr},Z^{\pr})$ are not contracted by $\Theta$.  Moreover, $\eta(Y^{\pr})\equiv\eta(Z^{\pr})\mod\con(\eta(Y),\eta(Z))$.  By Lemma \ref{cl_join-irreducible_label}, we deduce that $\eta(A_t)_*\equiv\eta(A_t)\mod\con(\eta(A_s)_*,\eta(A_s))$, as desired.\end{proof}

If $s\subseteq t$, then by first extending $s$ to an initial subsegment of $t$ and applying Lemma \ref{cl_end_contraction} twice, we deduce that $\eta(A_t)_*\equiv\eta(A_t)\mod\con(\eta(A_s)_*,\eta(A_s))$.  Therefore, $\con(\eta(A_s)_*,\eta(A_s))=\Theta_s$ holds, and Theorem \ref{thm_congruences} is proved.\end{proof}

\subsection{The lattice theoretic shard intersection order}

{In this section, we classify wide sets of segments lattice-theoretically using $\Psi^l(\lambda)$. We use this this description to conclude that $\Psi^l(\lambda)$ is isomorphic to  $\Psi^w(\lambda)$. Before doing so, it will be useful to show how intervals of $\text{Bic}(\lambda)$ such as those appearing in the definition of $\Psi^l(\lambda)$ may be regarded as lattices of biclosed sets on a restricted set of segments (see Proposition~\ref{prop_faces_GT}) as follows.} 

{If $S$ is a subset of $\Seg(\lambda)$, we say that a subset $X\subseteq S$ is \emph{closed relative to $S$} if $X=\ov{X}\cap S$. It is \emph{coclosed relative to $S$} if $(S\setm X)=\ov{S\setm X}\cap S$. We define $\Bic(S)$ as the collections of segments that are closed and coclosed relative to $S$, ordered by inclusion.}

\begin{lemma}\label{meet_of_B_is}
Let $X \in \text{Bic}(\lambda)$, and let $s_1, \ldots, s_l \in X$ be segments such that each $X\backslash\{s_i\}$ is biclosed. We have that $\bigwedge_{i = 1}^l X\backslash\{s_i\} = X\backslash\overline{\{s_1,\ldots, s_l\}}$.\end{lemma}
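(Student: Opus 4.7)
The plan is to reduce the meet computation to a join computation via complementation. By Lemma~\ref{Lemma_meet_in_bic},
$$\bigwedge_{i=1}^l (X \setminus \{s_i\}) = \left(\bigvee_{i=1}^l (X^c \cup \{s_i\})\right)^c,$$
and since joins in $\Bic(\lambda)$ are computed as closures of unions (by the remark following Theorem~\ref{thm_bic_prop}), the join on the right equals $\overline{X^c \cup \{s_1,\ldots,s_l\}}$. Using $(X^c \cup \overline{\{s_1,\ldots,s_l\}})^c = X \setminus \overline{\{s_1,\ldots,s_l\}}$, the theorem reduces to the closure identity
$$\overline{X^c \cup \{s_1,\ldots,s_l\}} = X^c \cup \overline{\{s_1,\ldots,s_l\}}.$$

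The inclusion $\supseteq$ is immediate from the closedness of $X^c$. For the reverse inclusion, Theorem~\ref{thm_bic_prop}(3) lets one write any element of the closure as a concatenation $w = t_1 \circ \cdots \circ t_k$ of generators $t_j \in X^c \cup \{s_1,\ldots,s_l\}$. If every $t_j$ lies in $\{s_1,\ldots,s_l\}$, then $w \in \overline{\{s_1,\ldots,s_l\}}$; otherwise the goal is to show $w \in X^c$. The essential input is the hypothesis that each $X \setminus \{s_i\}$ is biclosed, which means each complement $X^c \cup \{s_i\}$ is closed. Consequently, whenever $u \in X^c$ and the concatenation $u \circ s_i$ (or $s_i \circ u$) is defined, the result lies in $X^c \cup \{s_i\}$; by Theorem~\ref{thm_bic_prop}(3) its length strictly exceeds that of $s_i$, so it cannot equal $s_i$ and must lie in $X^c$. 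Combining this with the closedness of $X^c$ itself, a straightforward induction on $k$ (peeling off one factor from an end at a time) shows that any such concatenation containing at least one $X^c$ factor lies entirely in $X^c$.

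The main obstacle is the closure identity itself: a priori, one might worry about new segments arising from intricate interleavings of elements of $X^c$ with the $s_i$'s, which would spoil the clean decomposition. The assumption that each individual $X \setminus \{s_i\}$ is biclosed is precisely what rules this out, since it forces every mixed two-factor concatenation back into $X^c$, and the longer concatenations then collapse inductively.
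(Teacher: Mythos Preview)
Your proof is correct and follows essentially the same approach as the paper: both reduce via Lemma~\ref{Lemma_meet_in_bic} to showing that $\overline{X^c \cup \{s_1,\ldots,s_l\}} = X^c \cup \overline{\{s_1,\ldots,s_l\}}$ (equivalently, that the right-hand side is closed), and both establish this by inductively absorbing one $s_i$ at a time into an $X^c$-factor using the closedness of each $X^c \cup \{s_i\}$. Two cosmetic remarks: the inclusion $\supseteq$ follows from monotonicity of closure rather than from closedness of $X^c$, and the fact that $u \circ s_i \neq s_i$ needs only that $u$ is nonempty, not Theorem~\ref{thm_bic_prop}(3).
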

\begin{proof}
Since each $X\backslash\{s_i\}$ is biclosed, we know that $X^c\cup \{s_i\}$ is biclosed for each $i \in [l]$. 

We claim that $X^c \cup \overline{\{s_1,\ldots, s_l\}}$ is closed. It is enough to show that given composable segments $s \in X^c$ and $s_{i_1}\circ \cdots \circ s_{i_\ell} \in \overline{\{s_{1}, \ldots, s_{l}\}}$ we have $s \circ s_{i_1}\circ \cdots \circ s_{i_\ell}  \in X^c\cup \overline{\{s_{1}, \ldots, s_l\}}.$ Since $X^c \cup \{s_{i_1}\}$ is closed, we know that $s \circ s_{i_1} \in X^c\cup \overline{\{s_{1}, \ldots, s_l\}}$. Moreover, $s \circ s_{i_1} \in X^c$ since $s \not \in \overline{\{s_1,\ldots, s_l\}}$. Since $X^c \cup \{s_{i_2}\}$ is closed, we obtain that $s \circ s_{i_1}\circ s_{i_2} \in X^c\cup \overline{\{s_1,\ldots, s_l\}}$. As above, $s \circ s_{i_1} \circ s_{i_2} \in X^c$ since $s \not \in \overline{\{s_1,\ldots, s_l\}}$. Continuing with this argument, we obtain that $X^c \cup \overline{\{s_1,\ldots, s_l\}}$ is closed.

Now Lemma~\ref{Lemma_meet_in_bic} implies the following
$$\begin{array}{rclll}
\bigwedge_{i = 1}^l X\backslash\{s_i\} 
& = & \left(\bigvee_{i=1}^l X^c \cup \{s_i\}\right)^c \\
& = & \left(\overline{\bigcup_{i=1}^l X^c \cup \{s_i\}}\right)^c \\
& = & \left(\overline{X^c \cup (\bigcup_{i=1}^l \{s_i\})}\right)^c \\
& = & \left(\overline{X^c \cup \overline{\{s_1,\ldots, s_l\}}}\right)^c \\
& = & \left(X^c \cup \overline{\{s_1,\ldots, s_l\}}\right)^c & \text{(using that $X^c \cup \overline{\{s_1,\ldots, s_l\}}$ is closed)} \\
& = & X\backslash\overline{\{s_1, \ldots, s_l\}}.
\end{array}$$\end{proof}

\begin{proposition}\label{prop_faces_GT}
Let $X$ be a biclosed set of segments, and let $s_1,\ldots,s_l\in X$ such that $X\setm\{s_i\}$ is biclosed. Then $\Bic(\ov{\{s_1,\ldots,s_l\}})$ is isomorphic to the interval $[\bigwedge_{i=1}^l X\setm\{s_i\},\ X]$. Furthermore, this isomorphism descends to the quotient mod $\Theta$; that is,
$$\Bic(\ov{\{s_1,\ldots,s_l\}})/\overline{\Theta}\cong[\bigwedge_{i=1}^l X\setm\{s_i\},\ X]/\Theta$$ where $Y_1, Y_2 \in \Bic(\ov{\{s_1,\ldots,s_l\}})$ satisfy $Y_1 \equiv Y_2 \text{ mod } \overline{\Theta}$ if and only if $\overline{\eta}(Y_1) = \overline{\eta}(Y_2)$ where $\overline{\eta}$ is the lattice quotient map $\overline{\eta}: \Bic(\ov{\{s_1,\ldots,s_l\}}) \to \Bic(\ov{\{s_1,\ldots,s_l\}})/\overline{\Theta}.$
\end{proposition}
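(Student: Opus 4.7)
The plan is to exhibit mutually inverse, order-preserving maps between the interval $[\bigwedge_{i=1}^l X \setminus \{s_i\},\ X]$ and $\Bic(\overline{\{s_1,\ldots,s_l\}})$, and then check that the bijection identifies the two congruences. First, I would use Lemma~\ref{meet_of_B_is} to rewrite $A := \bigwedge_{i=1}^l X \setminus \{s_i\} = X \setminus \overline{\{s_1,\ldots,s_l\}}$, so that every $Z$ in the interval decomposes uniquely as $Z = A \sqcup (Z \cap \overline{\{s_1,\ldots,s_l\}})$. I would then set $\Phi(Z) := Z \cap \overline{\{s_1,\ldots,s_l\}}$ and $\Psi(Y) := A \cup Y$. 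These maps are inclusion-preserving and set-theoretically inverse by construction, so the remaining work is to show they land in the correct lattices.

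For $\Phi$, closedness and coclosedness of $\Phi(Z)$ relative to $\overline{\{s_1,\ldots,s_l\}}$ follow by intersecting the closed sets $Z$ and $Z^c$ with $\overline{\{s_1,\ldots,s_l\}}$. For $\Psi$, the verification that $A \cup Y$ is closed splits into cases by which of $A$ or $Y$ contains each factor of a concatenation: the both-in-$A$ case uses that $A$ is biclosed (as a meet of biclosed sets), the both-in-$Y$ case uses closedness of $Y$ relative to $\overline{\{s_1,\ldots,s_l\}}$ together with the closedness of $\overline{\{s_1,\ldots,s_l\}}$ in $\Bic(\lambda)$, and the coclosedness verification for $\Psi(Y)$ is dual. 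The hard part will be the mixed case: given $a \in A$ and $y \in Y$ with $a \circ y$ defined, closure of $X$ gives $a \circ y \in X$, so the only possibilities are $a \circ y \in A$ or $a \circ y \in \overline{\{s_1,\ldots,s_l\}}$, and one must further ensure that if the latter occurs then $a \circ y \in Y$. I would handle this by fixing a decomposition $a \circ y = t_1 \circ \cdots \circ t_m$ with $t_j \in \{s_1,\ldots,s_l\}$ and examining where the $a$-$y$ boundary sits among the $t_j$'s: if it coincides with a cut $t_p|t_{p+1}$, then $a = t_1 \circ \cdots \circ t_p \in \overline{\{s_1,\ldots,s_l\}}$, contradicting $a \in A$; otherwise the boundary lies strictly inside some $t_p$, and I would combine the biclosedness of $X \setminus \{t_p\}$ (guaranteed since $t_p \in \{s_1,\ldots,s_l\}$) with the coclosedness of $Y$ relative to $\overline{\{s_1,\ldots,s_l\}}$, applied to the induced sub-decomposition of $y$, to force $a \circ y \in Y$.

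For the descent to quotients, I would transport the restriction of $\Theta$ to $[A, X]$ across the bijection $\Phi$, defining $\overline{\Theta}$ as its image on $\Bic(\overline{\{s_1,\ldots,s_l\}})$. Since $\Phi$ is a poset isomorphism, this transport automatically yields $\Bic(\overline{\{s_1,\ldots,s_l\}})/\overline{\Theta} \cong [A, X]/\Theta$, with the quotient map $\overline{\eta}$ arising as the composite of $\Phi^{-1}$, the inclusion into $\Bic(\lambda)$, and the restriction of $\eta$. To match the combinatorial description in the statement, I would use the explicit formula $Z^{\downarrow} = \{s \in Z : A_s \subseteq Z\}$ from Section~\ref{subsec_gt_biclosed} to verify that $Z_1 \equiv Z_2 \pmod{\Theta}$ in the interval is equivalent to $\Phi(Z_1)$ and $\Phi(Z_2)$ sharing a common down-closure in the induced SW-subsegment structure on $\overline{\{s_1,\ldots,s_l\}}$, exploiting the fact that any SW-subsegment of an element $s$ not lying in $\overline{\{s_1,\ldots,s_l\}}$ automatically lies in $A$ and hence in both $Z_i$.
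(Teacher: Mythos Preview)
Your approach is essentially the paper's: the same maps $\Phi$ and $\Psi$, the same appeal to Lemma~\ref{meet_of_B_is} to rewrite the meet as $X\setminus\ov{\{s_1,\ldots,s_l\}}$, the same case breakdown for well-definedness of $\Psi$, and the same use of the $(-)^{\downarrow}$ description for matching the congruences. One small correction for the mixed case: the paper first records that no relation $s_i\circ t=s_j$ can hold among the $s_i$, then compares a decomposition of $a\circ y$ into $s_i$'s against a decomposition of $y$ and peels off equal terminal factors to reach a contradiction---so the conclusion is actually $a\circ y\in A$, not $a\circ y\in Y$ as you anticipate; your boundary-position idea is headed the right way but should be steered toward that contradiction rather than toward membership in $Y$.
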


\begin{proof}
We first show that for any $s_i$ and $s_j$ there does not exist a segment $t \in \Seg(\lambda)$ such that $s_i\circ t = s_j.$ Suppose that such a segment $t$ does exist. As $s_i \not \in X\setm\{s_i\}$ and $X\setm\{s_i\}$ is biclosed, we have $t \in X\setm\{s_i\}$. This implies $t \in X$ and thus $t \in X\setm\{s_j\}$. However, this means that $s_i, t \in X\setm\{s_j\}$, but $s_j = s_i\circ t \not \in X\setm\{s_j\}$. This contradicts that $X\setm\{s_j\}$ is biclosed.

Next, we show that the map $Y^\prime \mapsto Y^\prime \cap \overline{\{s_1, \ldots, s_l\}}$ from $[\bigwedge_{i = 1}^l X\backslash\{s_i\}, X]$ to $\text{Bic}(\overline{\{s_1,\ldots, s_l\}})$ is an isomorphism of posets. It is easy to see that this map is well-defined and order-preserving. On the other hand, its inverse $Y \mapsto Y^\prime := (\bigwedge_{i=1}^l X\backslash \{s_i\}) \cup Y$ is clearly order-preserving so it remains to prove that its inverse is well-defined. To do so, we show that $Y^\prime \in \Bic(\lambda)$. 

To see that $Y^\prime$ is closed, it is enough show that if $t_1 \in \bigwedge_{i=1}^l X\backslash \{s_i\}$ and $t_2 \in Y$ are composable, then $t_1\circ t_2 \in Y^\prime$. Since $X$ is closed, we know $t_1\circ t_2 \in X$. Suppose $t_1 \circ t_2 \not \in \bigwedge_{i=1}^l X\backslash \{s_i\}$. By Lemma~\ref{meet_of_B_is}, we know that $t_1\circ t_2 = s_{i_1}\circ \cdots \circ s_{i_k}$ for some $s_{i_1}, \ldots, s_{i_k} \in \overline{\{s_1, \ldots, s_l\}}.$ Now write $t_2 = s_{j_1}\circ \cdots \circ s_{j_r}$ where $s_{j_1}, \ldots, s_{j_r} \in Y$. If $s_{i_k} \subseteq s_{j_r}$, then $s_{j_r} = s_{i_k} \circ t^\prime$ for some segment $t^\prime \in \text{Seg}(\lambda)$. However, such an equation contradicts the result from the first paragraph of the proof. The analogous argument shows $s_{j_r}$ is not properly contained in $s_{i_k}$. We conclude that $s_{j_r} = s_{i_k}$. By repeating this argument and removing pairs of equal segments $s_{j_n} = s_{i_m}$ with $n \le r$ and $m \le k$, we either obtain an equation $t_1 \circ s_{j_1}\circ \cdots \circ s_{j_{n-1}} = s_{i_1}$ or $t_1 = s_{i_1}\circ \cdots \circ s_{i_{m-1}}$. In either case, we reach a contradiction.

We now show that $Y^\prime$ is co-closed. Let $t_1, t_2 \not \in Y^\prime$ be composable. We can assume $t_1\circ t_2 \in X$, otherwise we are done. Since $X$ is co-closed, we can assume $t_2 \in X$. We also know that $\bigwedge_{i=1}^l X\backslash \{s_i\}$ is co-closed so $t_1\circ t_2 \not \in \bigwedge_{i=1}^l X\backslash \{s_i\}$. Now by Lemma~\ref{meet_of_B_is}, we have $t_1\circ t_2 = s_{i_1}\circ \cdots \circ s_{i_k}$ for some $s_{i_1},\ldots, s_{i_k} \in \overline{\{s_1,\ldots, s_l\}}$ and $t_2 = s_{j_1}\circ \cdots \circ s_{j_r}$ for some $s_{j_1}, \ldots, s_{j_r} \in \overline{\{s_1,\ldots, s_l\}}.$ Using the argument from the previous paragraph, we either obtain an equation $t_1 = s_{i_1}\circ \cdots \circ s_{i_{m-1}}$ so $t_1 \in X\backslash Y^\prime$. We conclude that $t_1\circ t_2 \in X\backslash Y^\prime$. It follows that $\Bic(\ov{\{s_1,\ldots,s_l\}})$ is isomorphic to the interval $[\bigwedge_{i=1}^l X\setm\{s_i\},\ X]$.

Lastly, we show that this isomorphism descends to the quotient mod $\Theta$. To do so, we show that for any $Y_1^\prime, Y_2^\prime \in \Bic(\lambda)$ with corresponding relatively biclosed sets $Y_1, Y_2 \in \Bic(\ov{\{s_1,\ldots,s_l\}})$, one has $\eta(Y_1^\prime) = \eta(Y_2^\prime)$ if and only if $\overline{\eta}(Y_1) = \overline{\eta}(Y_2)$. Using \cite[Claim 8.9]{mcconville:2015lattice}, we show that ${Y_1^\prime}^\downarrow = {Y_2^\prime}^\downarrow$ if and only if ${Y_1}^{\downarrow, \overline{\Theta}} = {Y_2^{\downarrow, \overline{\Theta}}}$ where ${Y_i}^{\downarrow, \overline{\Theta}} := \{s \in Y_i: \ A_s\cap \overline{\{s_1, \ldots, s_l\}} \subseteq Y_i\}$ for $i = 1, 2$. We only show that the latter implies the former as the converse is clear.

Suppose that ${Y_1}^{\downarrow, \overline{\Theta}} = {Y_2^{\downarrow, \overline{\Theta}}}$. Let $s \in {Y_1^\prime}^\downarrow$. Note that $A_s = (A_s\cap (\bigwedge_{i=1}^lX\backslash\{s_i\}))\sqcup (A_s \cap Y_1)$ so it is enough to show that any segment $t \in A_s\cap Y_1$ belongs to $Y_2$. If $t \in A_s\cap Y_1$, then $A_t \subseteq A_s$ so $t \in A_t\cap \overline{\{s_1,\ldots, s_l\}} \subset Y_1$. By assumption, $t \in A_t\cap \overline{\{s_1,\ldots, s_l\}} \subset Y_2.$ We conclude that $s \in {Y_2^\prime}^\downarrow$. The proof of the opposite inclusion is similar.\end{proof}

Let $\Psi^l(\lambda)$ be the shard intersection order of the congruence-uniform lattice $\GT(\lambda)$. Let $f:\Seg(\lambda)\ra\JI(\GT(\lambda))$ be the bijection from Lemma~\ref{cl_join-irreducible_bijection}, namely $f(s)=\eta(A_s)$. As $\GT(\lambda)$ is congruence-uniform, this extends to a bijection $\hat{f}:\Seg(\lambda)\ra\JI(\Con(\GT(\lambda)))$ where $\hat{f}(s)=\con(f(s)_*,f(s))$.

\begin{theorem}\label{thm_lattice_shard_intersection}
A set $T$ of segments is wide if and only if there exists an element $x\in\GT(\lambda)$ such that
$$\{\hat{f}(s):\ s\in T\}=\{\con(w,z):\ \bigwedge_{i=1}^l y_i\leq w\lessdot z\leq x\},$$
where $y_1,\ldots,y_l$ are the elements covered by $x$. Consequently, the posets $\Psi^{w}(\lambda)$ and $\Psi^l(\lambda)$ are isomorphic.
\end{theorem}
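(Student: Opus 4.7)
The plan is to establish the bijection $\GT(\lambda)\to\Psi^{w}(\lambda)$ explicitly by the rule $x\mapsto\overline{\Des(x)}$, and then verify that this coincides with the lattice-theoretic assignment $x\mapsto\hat{f}^{-1}(\psi(x))$. Once this is done, both directions of the stated equivalence follow immediately, and the poset isomorphism $\Psi^{l}(\lambda)\cong\Psi^{w}(\lambda)$ is automatic since both orderings are inclusion transported across $\hat{f}$.

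I would fix $x\in\GT(\lambda)$, set $D=\Des(x)=\{s_1,\ldots,s_l\}$, and write $X=\phi(x)$. By Lemma~\ref{cl_join-irreducible_label}, the lower covers $y_1,\ldots,y_l$ of $x$ correspond in $\Bic(\lambda)$ to the biclosed sets $X\setminus\{s_i\}$, and the cover $y_i\lessdot x$ carries edge-label $s_i$. I would then apply Proposition~\ref{prop_faces_GT} with this $X$ and these $s_i$: the interval $[\bigwedge_{i=1}^l y_i,\,x]$ in $\GT(\lambda)$ is isomorphic to the quotient lattice $\Bic(\overline{D})/\overline{\Theta}$, where $\overline{D}=\overline{\{s_1,\ldots,s_l\}}$.

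Next I would observe that the Grid-Tamari framework carries over verbatim to the restricted segment set $\overline{D}$: the lattice $\Bic(\overline{D})$ is congruence-uniform for exactly the same reasons as in Theorem~\ref{thm_bic_prop}, and the relative analogues of Lemmas~\ref{cl_join-irreducible} and~\ref{cl_join-irreducible_bijection} give a bijection between $\overline{D}$ and the join-irreducibles of $\Bic(\overline{D})/\overline{\Theta}$; in particular every segment of $\overline{D}$ labels at least one cover in that quotient. The crucial compatibility is that the edge labeling on covers in $\Bic(\overline{D})/\overline{\Theta}$ matches, under the isomorphism of Proposition~\ref{prop_faces_GT}, the Grid-Tamari edge labeling on the corresponding covers of $[\bigwedge_i y_i,\,x]$; this follows from Lemma~\ref{cl_join-irreducible_label} applied in each lattice together with the fact that the identification sends relatively biclosed sets $Y$ to $(\bigwedge_i y_i)\cup Y$. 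Combining these, the set of segments labeling covers in $[\bigwedge_i y_i,\,x]$ is precisely $\overline{D}$, which translates via Lemma~\ref{cl_join-irreducible_label} into the identity $\hat{f}^{-1}(\psi(x))=\overline{\Des(x)}$.

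To finish, since $\Des(x)$ is nonfriendly by Corollary~\ref{cor_lower_cover}, Proposition~\ref{prop_nonfriendly_transitive} gives that $\overline{\Des(x)}$ is wide, establishing one direction. Conversely, any wide set $T$ satisfies $T=\overline{\NF(T)}$ with $\NF(T)$ nonfriendly, so by Corollary~\ref{cor_lower_cover} there is $x\in\GT(\lambda)$ with $\Des(x)=\NF(T)$, and the preceding paragraph yields $\hat{f}^{-1}(\psi(x))=\overline{\NF(T)}=T$. I expect the main obstacle to be the step showing that edge labels are preserved under the isomorphism of Proposition~\ref{prop_faces_GT}; this requires a careful matching of which segment $s$ satisfies the cover condition $w\vee f(s)=z$, $w\wedge f(s)=f(s)_*$ in each of the two lattices, and relies on the fact that a segment $s\in\overline{D}$ lies in a $\Theta$-minimal biclosed set of the form $\phi(y)$ for some $y$ in the interval if and only if the restriction $\phi(y)\cap\overline{D}$ is $\overline{\Theta}$-minimal in $\Bic(\overline{D})$, exactly as in Proposition~\ref{sub_quot_phi_eta}.
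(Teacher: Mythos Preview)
Your proposal is correct and follows essentially the same route as the paper: identify the interval $[\bigwedge_i y_i,\,x]$ with $\Bic(\overline{\Des(x)})/\overline{\Theta}$ via Proposition~\ref{prop_faces_GT}, argue that the edge labels on covers are preserved so that the set of labels appearing is exactly $\overline{\Des(x)}$, and then invoke Proposition~\ref{prop_nonfriendly_transitive} and Corollary~\ref{cor_lower_cover} for both directions. The paper's proof asserts the preservation of edge labels in one clause, whereas you correctly flag it as the step requiring the most care and sketch how it follows from Lemma~\ref{cl_join-irreducible_label} applied in the restricted lattice; this is the only real difference in emphasis.
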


\begin{proof}
Let $x\in\GT(\lambda)$, and set
$$T=\{\hat{f}^{-1}(\con(w,z)):\ \bigwedge_{i=1}^l y_i\leq w\lessdot z\leq x\},$$
where $y_1,\ldots,y_l$ are the elements of $\GT(\lambda)$ covered by $x$. Then, we have
$$T=\{s:\ \exists w\stackrel{s}{\ra}z,\ \bigwedge_{i=1}^l y_i\leq w\lessdot z\leq x\}.$$
Let $X=\phi(x)$, and let $s_i$ be the segment labeling $y_i\stackrel{s_i}{\ra}x$. Then $\{s_1,\ldots,s_l\}$ is a face of the nonfriendly complex. Since $\eta(X\setm\{s_i\})=y_i$, the interval $[\bigwedge_{i=1}^l X\setm\{s_i\},X]$ maps to $[\bigwedge_{i=1}^l y_i,x]$ under $\eta$. By Proposition~\ref{prop_faces_GT}, the interval $[\bigwedge_{i=1}^l y_i,x]$ is isomorphic to $\Bic(\ov{\{s_1,\ldots,s_l\}})/{\overline{\Theta}}$, and this isomorphism preserves the edge labels. Hence, $T$ is equal to $\ov{\{s_1,\ldots,s_l\}}$. By Proposition~\ref{prop_nonfriendly_transitive}, this set is wide.

Next, we prove the converse statement. Let $T$ be a wide set and $X=\NF(T)$. Then $\bigvee_{s\in X}\eta(A_s)$ is the descent set of some element $x\in\GT(\lambda)$. Hence,
$$X=\Des(x)=\{\hat{f}^{-1}(\con(y,x)):\ y\lessdot x\}.$$
By Proposition~\ref{prop_nonfriendly_transitive}, we have $T=\ov{X}$. By the previous argument,
$$\ov{X}=\{\hat{f}^{-1}(\con(w,z)):\ \bigwedge_{i=1}^ly_i\leq w\lessdot z\leq x\}$$
where $y_1,\ldots,y_l$ are the elements of $\GT(\lambda)$ covered by $x$.
\end{proof}

\section{The Grid-associahedron fan}\label{sec_fan}

In this section, we realize the reduced nonkissing complex $\wtil{\Delta}^{NK}(\lambda)$ as the faces of a complete simplicial fan $\Fcal_{\lambda}$. Then we prove that the Grid-Tamari order is the partial order on maximal cones of $\Fcal_{\lambda}$ induced by a certain linear functional. This fan realization induces an alternate lattice known as the (geometric) shard intersection order $\Psi^f(\lambda)$. We prove that this version of the shard intersection order is isomorphic to the poset of wide sets in Section~\ref{sec_shard}.

\subsection{Fans}\label{subsec_fan_fans}

We recall some basic definitions for polyhedral fans. Fix a real vector space $V=\Rbb^r$. A polyhedral cone $C$ is \emph{pointed} if $\{0\}$ is a face of $C$. A pointed cone is minimally generated by a unique set of vectors, which are called the \emph{extreme rays}. A pointed cone is \emph{simplicial} if its extreme rays are linearly independent.

A \emph{fan} is a finite set of cones $\Fcal=\{C_1,\ldots,C_N\}$ in $V$ such that
\begin{itemize}
\item every face of $C_i$ is in $\Fcal$ for all $i\in[N]$, and
\item $C_i\cap C_j$ is in $\Fcal$ for all $i,j\in[N]$.
\end{itemize}
We will consider fans that are \emph{complete}, \emph{pointed}, and \emph{simplicial}. This means that $V=\bigcup_{i=1}^N C_i$, every cone is pointed, and every cone is simplicial, respectively. The following lemma gives a well-known characterization of simplicial fans.

\begin{lemma}\label{lem_comp_simp_fan}
Let $\Fcal$ be a set of pointed cones $\{C_1,\ldots,C_N\}$ such that every face of a cone in $\Fcal$ is also in $\Fcal$. Then $\Fcal$ is a complete simplicial fan if and only if for all $x\in V$, there exists a unique cone $C_i$ such that $x$ is a (strictly) positive linear combination of the extreme rays of $C_i$, and this linear combination is unique.
\end{lemma}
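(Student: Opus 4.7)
My approach is to unpack the fan axioms and leverage the unique-representation property to pin down which cone's relative interior contains each point of $V$. For the forward direction, assume $\Fcal$ is a complete simplicial fan. Given $x\in V$, completeness provides a cone containing $x$; taking the smallest face of that cone that contains $x$ yields a cone $C_i\in\Fcal$ having $x$ in its relative interior. Because $C_i$ is simplicial, its extreme rays are linearly independent, and $x$ is a unique strictly positive combination of them. Uniqueness of the cone $C_i$ then follows from the intersection axiom: if $x$ lay in the relative interior of two distinct cones of $\Fcal$, the common face produced by their intersection would have to meet the relative interior of each, forcing the two cones to coincide.

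For the backward direction, completeness is immediate, and simpliciality is a one-line perturbation: a nontrivial dependence $\sum\beta_k r_k=0$ among the extreme rays $r_k$ of some $C_i$ would let any strict positive combination $x=\sum\alpha_k r_k$ be rewritten as $\sum(\alpha_k+t\beta_k)r_k$ for small $t\neq 0$, giving a second strict positive combination and violating uniqueness. Establishing the intersection axiom $C_i\cap C_j\in\Fcal$ is the main obstacle. I would first show that for every $x\in C_i\cap C_j$, the unique $C_k\in\Fcal$ whose relative interior contains $x$ is a common face of $C_i$ and $C_j$: the minimal face of $C_i$ containing $x$ is itself in $\Fcal$ (by the face-closure hypothesis), its extreme rays form a subset of those of $C_i$ and span $x$ with strictly positive coefficients, so the uniqueness hypothesis forces this face to equal $C_k$; the symmetric argument applies to $C_j$.

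To conclude, I would apply the previous step to a point $x$ in the relative interior of the polyhedral cone $C_i\cap C_j$, obtaining a common face $C_k$ of $C_i$ and $C_j$ with $x$ in its relative interior. The final delicate point is promoting $C_k$ from a face of $C_i$ to a face of $C_i\cap C_j$: I would exhibit a linear functional nonnegative on $C_i$ whose zero locus on $C_i$ is exactly $C_k$, observe that it remains nonnegative on $C_i\cap C_j$ with zero locus exactly $C_k$ (since $C_k\subseteq C_i\cap C_j\subseteq C_i$), and conclude that $C_k$ is a face of $C_i\cap C_j$ meeting its relative interior, so that $C_k=C_i\cap C_j\in\Fcal$.
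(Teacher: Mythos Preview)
Your proof is correct and follows essentially the same strategy as the paper: both directions hinge on identifying, for each point, the unique cone of $\Fcal$ containing it in its relative interior, and the intersection axiom is verified via minimal faces. The only difference is your final step, which is slightly more elaborate than necessary. The paper takes $F_i$ to be the minimal face of $C_i$ containing the \emph{entire set} $C_i\cap C_j$ (rather than just a relative-interior point $x$), so the containment $C_i\cap C_j\subseteq F_i$ is immediate from the definition; once uniqueness gives $F_i=F_j$, the reverse containment $F_i=F_j\subseteq C_i\cap C_j$ follows because $F_i\subseteq C_i$ and $F_j\subseteq C_j$. This bypasses your supporting-functional argument entirely.
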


\begin{proof}
A point $x$ is a (strictly) positive linear combination of the extreme rays of $C_i$ exactly when it lies in the relative interior of $C_i$.

If $\Fcal$ is a complete fan, then $V$ is the disjoint union of the relative interiors of the cones in $\Fcal$. If a cone $C_i$ is simplicial, then each point in its relative interior may be uniquely expressed as a positive linear combination of the extreme rays of $C_i$.

Conversely, suppose for all $x\in V$, there is a unique cone $C_i$ for which $x$ is a positive linear combination of the extreme rays of $C_i$, and this linear combination is unique. The uniqueness of the linear combination implies that every cone is simplicial. Completeness follows from the existence of a cone containing any given point $x$. It remains to show that the intersection $C_{ij}=C_i\cap C_j$ of two distinct cones $C_i$ and $C_j$ is a face of each. Since the relative interiors of $C_i$ and $C_j$ are disjoint, the intersection $C_{ij}$ lies on the boundary of each. Hence, there exists a minimal face $F_i$ of $C_i$ and $F_j$ of $C_j$ each containing $C_{ij}$. A point in the relative interior of $C_{ij}$ is also in the relative interior of $F_i$ and $F_j$, which implies $F_i=F_j$ since they are cones in $\Fcal$. We conclude that $C_{ij}=F_i=F_j$ is a face of $C_i$ and $C_j$.
\end{proof}

\subsection{Fan realization of the nonkissing complex}\label{subsec_fan_nonkissing}

Fix a shape $\lambda$, and let $V^o$ be the set of interior vertices of $\lambda$. For a boundary path $p$, let $g_p$ be the vector in $\Rbb^{V^o}$ such that for $v\in V^o$,
$$g_p(v)=\begin{cases}1\ &\mbox{if }p\mbox{ enters }v\mbox{ from the North and leaves to the East,}\\-1\ &\mbox{if }p\mbox{ enters }v\mbox{ from the West and leaves to the South,}\\0\ &\mbox{otherwise.}\end{cases}$$

Given a face $F$ in $\wtil{\Delta}^{NK}(\lambda)$, let $C(F)$ be the cone generated by $\{g_p:\ p\in F\}$. Let $\Fcal_{\lambda}=\{C(F):\ F\in\wtil{\Delta}^{NK}(\lambda)\}$. We refer to $\Fcal_{\lambda}$ as the \emph{Grid-associahedron fan}. This terminology is justified by the following theorem.

\begin{theorem}\label{thm_fan}
The set $\Fcal_{\lambda}$ is a complete simplicial fan.
\end{theorem}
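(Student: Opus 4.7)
The plan is to verify the criterion of Lemma~\ref{lem_comp_simp_fan}: every $x \in \Rbb^{V^o}$ is a strictly positive combination of the rays of a unique cone in $\Fcal_{\lambda}$, and the coefficients are uniquely determined.

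First I would establish that each facet's rays span a simplicial cone. Since the reduced nonkissing complex is pure of dimension $|V^o|-1$, each facet $F$ contains exactly $|V^o|$ paths; linear independence of $\{g_p : p \in F\}$ in $\Rbb^{V^o}$ then forces $C(F)$ to be simplicial. Starting with the initial facet $F_0 = \{p_v : v \in V^o\}$, a direct inspection shows that at every interior vertex $w \neq v$ traversed by $p_v$ the path enters from the West and leaves East or enters from the North and leaves South, whereas at $v$ itself $p_v$ enters from the West and leaves to the South. Hence $g_{p_v} = -e_v$, where $\{e_v\}$ is the standard basis of $\Rbb^{V^o}$, and the claim for $F_0$ is immediate.

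Next I would propagate linear independence across flips. If $F \stackrel{s}{\ra} F'$ is a flip in the Grid-Tamari order replacing $p$ by $p'$, construct two auxiliary boundary paths by mixing $p$ and $p'$ across the kissing segment $s$: let $p \wedge p'$ be the path that follows $p$ up to the initial vertex of $s$, traverses $s$, and follows $p'$ after the terminal vertex of $s$; define $p \vee p'$ symmetrically. A vertex-by-vertex check from the definition of $g$ yields the mutation identity
\begin{equation*}
g_p + g_{p'} \;=\; g_{p \wedge p'} + g_{p \vee p'},
\end{equation*}
under the convention that $g_q = 0$ whenever $q$ is a horizontal or vertical cone path. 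Because $p \wedge p'$ and $p \vee p'$ are nonkissing with every path of $F \cap F'$, they either coincide with members of $F \cap F'$ or contribute nothing to the identity. Thus $g_{p'}$ is a signed combination of $g_p$ and rays of $F \cap F'$, so the $g$-vectors of $F'$ still form a basis of $\Rbb^{V^o}$. Since the flip graph is the Hasse diagram of the lattice $\GT(\lambda)$ (Section~\ref{sec_tamari}), induction on a sequence of flips from $F_0$ extends the basis property to every facet.

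For coverage and uniqueness, I would run a wall-crossing algorithm. Given $x \in \Rbb^{V^o}$, begin with the expansion $x = \sum_v c_v g_{p_v}$ in the initial basis. If every $c_v \geq 0$, then $x \in C(F_0)$, and the strictly positive coefficients pick out the unique face of $F_0$ whose open cone contains $x$. Otherwise, pick a $p_v$ with $c_v < 0$, cross the corresponding wall by flipping $p_v$ to some $p'$, and use the mutation identity to rewrite $x$ in the new basis. A monovariant on facets, such as the rank in $\GT(\lambda)$ of the associated biclosed set $\phi(F)$, strictly increases with each step, so the procedure terminates in a facet $F$ where $x$ has all nonnegative coefficients; the strictly positive ones single out the face of $F$ whose relative interior contains $x$. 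Uniqueness of $F$ and of the representation then follows because the relative interiors of pointed simplicial cones built from linearly independent rays are pairwise disjoint: two distinct expansions would force contradictory sign patterns on some shared $g_q$. The main obstacle is the flip-mutation identity in the second paragraph: its verification requires a careful case analysis at interior vertices lying on the kissing segment $s$ and at its endpoints, together with correct handling of the degenerate cases where either $p \wedge p'$ or $p \vee p'$ reduces to a cone path. Once this identity is secured, the remainder of the argument is standard fan-theoretic bookkeeping.
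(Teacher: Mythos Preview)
Your approach is genuinely different from the paper's---the paper argues by induction on $|V^o|$, removing a ``corner'' interior vertex $w$ and showing how the unique face containing $x'$ in the smaller shape extends to the unique face containing $x$---but your outline has two real gaps.

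First, the termination of your wall-crossing is not justified. You assert that the rank of $\phi(F)$ in $\GT(\lambda)$ strictly increases whenever you flip across a wall with negative coefficient, but you never explain why a negative coefficient forces the flip to be an \emph{ascent} rather than a descent in the Grid-Tamari order. That link between the sign of the coefficient and the direction of the flip is precisely the content of Proposition~\ref{prop_ineq_fan}, which is proved \emph{after} Theorem~\ref{thm_fan} and uses the fan structure to conclude that the listed inequalities cut out the cone. Without it, nothing prevents your algorithm from cycling. Note also that your mutation identity gives $g_{p'}=-g_p+g_{p\wedge p'}+g_{p\vee p'}$, so after the flip the new coefficient of $g_{p'}$ is $-c_p>0$, but the coefficients of $g_{p\wedge p'}$ and $g_{p\vee p'}$ each \emph{decrease} by $|c_p|$; there is no obvious monovariant on the coefficient vector itself.

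Second, and more seriously, your uniqueness argument is circular. You claim that ``the relative interiors of pointed simplicial cones built from linearly independent rays are pairwise disjoint,'' but two full-dimensional simplicial cones in $\Rbb^{V^o}$ can certainly overlap; disjointness of their interiors is exactly the fan property you are trying to prove. The phrase ``contradictory sign patterns on some shared $g_q$'' does not help when two facets $F$ and $F'$ share no paths at all. A correct wall-crossing proof needs an additional global argument---for instance, showing that adjacent cones lie on opposite sides of their common wall (which your mutation identity does give) and then running a covering-degree or simply-connectedness argument to conclude that a generic $x$ lies in exactly one maximal cone. As written, your proposal establishes existence (modulo termination) but not uniqueness.
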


\begin{proof}
We prove by induction on $|V^o|$ that for each point $x\in\Rbb^{V^o}$ there exists a unique $F\in\wtil{\Delta}^{NK}(\lambda)$ such that $x$ is a (strictly) positive linear combination of vectors in $\{g_p:\ p\in F\}$, and this expression for $x$ is unique.

Fix a point $x\in\Rbb^{V^o}$. Let $w$ be an interior vertex such that the two vertices immediately South and East of $w$ are boundary vertices. Replacing $\lambda$ by its transpose $\lambda^t$ if necessary, we may assume that $x(w)\geq 0$. Let $\lambda^{\pr}$ be the same shape as $\lambda$ except that $w$ is treated as a boundary vertex of $\lambda^{\pr}$, and let $x^{\pr}$ be the restriction of $x$ to $V^o\setm\{w\}$. By the inductive hypothesis, there exists a unique face $F^{\pr}$ of $\wtil{\Delta}^{NK}(\lambda^{\pr})$ such that $x^{\pr}$ is a positive linear combination of vectors in $\{g_p:\ p\in F^{\pr}\}$. Moreover, the coefficients $a_p>0$ in the equation $x^{\pr}=\sum_{p\in F^{\pr}}a_pg_p$ are unique.

We construct a face $F$ with $x\in C(F)$ by extending elements of $F^{\pr}$ as follows. A path in $\lambda^{\pr}$ that ends at $w$ may be extended by one step South or East to form a boundary path of $\lambda$. If $p\in F^{\pr}$ does not end at $w$, then the path is left alone. If $x(w)\geq 0$ and $p\in F^{\pr}$ enters $w$ from the West, then $p$ is extended to the East. Similarly, if $x(w)\leq 0$ and $p\in F^{\pr}$ enters $w$ from the North, then $p$ is extended to the South.

The remaining paths are extended by the following rule. Suppose $x(w)>0$, and let $e$ be the vertical edge in $\lambda$ with southern endpoint $w$. Let $\{p_1,\ldots,p_k\},\ (k\geq 0)$ be the set of paths in $F^{\pr}$ containing $e$, ordered such that $p_k\prec_e\cdots\prec_e p_1$. Let $p_{k+1}$ be the vertical path in $\lambda^{\pr}$ containing $e$, which implies $p_{k+1}\prec_e p_k$ by definition. We note that $x^{\pr}=\sum_{p\in F^{\pr}\cup\{p_{k+1}\}}a_pg_p$ for any value of $a_{p_{k+1}}$ since $g_{p_{k+1}}=0$. For convenience, we set $a_{p_{k+1}}=x(w)+1$. There exists a unique index $l\in[1,k+1]$ such that
$$\sum_{i=1}^{l-1}a_{p_i}< x(w)\leq\sum_{i=1}^la_{p_i}.$$

We extend each of the paths $p_1,\ldots,p_l$ one step East, and each of the paths $p_{l+1},\ldots,p_k$ one step South. Finally, if $x(w)<\sum_{i=1}^la_{p_i}$ and $l<k+1$, we also add the extension of $p_l$ one step South. The set $F$ of paths obtained by extending paths in $F^{\pr}$ in this manner is a nonkissing collection. We note that $|F|=|F^{\pr}|+1$ unless $x(w)=\sum_{i=1}^la_{p_i}$, in which case $|F|=|F^{\pr}|$. For $p\in F$, let $a_p=a_{p^{\pr}}$ if $p^{\pr}\neq p_l$ where $p^{\pr}$ is the restriction of $p$ to $V^o(\lambda^{\pr})$. If $p$ is the extension of $p_l$ one step East, we set $a_p=x(w)-\sum_{i=1}^{l-1}a_{p_i}$. If $x(w)<\sum_{i=1}^la_{p_i}$ and $p$ is the extension of $p_l$ one step South, we let $a_p=\sum_{i=1}^la_{p_i}-x(w)$. With these coefficients, we have $x=\sum_{p\in F}a_pg_p$.

We next prove that this expression for $x$ is unique. Let $x=\sum_{p\in F}b_pg_p$ for some coefficients $b_p>0$. For a path $p\in F$, let $p^{\pr}$ be the restriction of $p$ to $\lambda^{\pr}$. Then $x^{\pr}=\sum_{p\in F}b_pg_{p^{\pr}}$. The uniqueness of this expression forces $b_p=a_{p^{\pr}}$ for all paths $p\in F$ except $p^{\pr}=p_l$ if $|F|=|F^{\pr}|+1$. For the two paths whose restriction to $\lambda^{\pr}$ is $p_l$, only one of them turns at $w$. Hence, the remaining two coefficients are uniquely determined.

To complete the proof, we show that if $x$ is in the relative interior of $C(G)$ for some face $G$ then $G=F$. Let $x\in C(G)$ such that $x=\sum_{p\in G}c_pg_p$ for some coefficients $c_p>0$. Then $x^{\pr}=\sum_{p\in G}c_pg_{p^{\pr}}$. Then $x^{\pr}$ is in the relative interior of $C(G^{\pr})$ where $G^{\pr}$ is the restriction of $G$ to $\lambda^{\pr}$. Hence, $G^{\pr}=F^{\pr}$, so $G$ is obtained by extending paths in $F^{\pr}$. Since $G$ is a nonkissing collection, it does not contain two paths $p,q$ such that $p$ enters $w$ from the West and leaves to the South while $q$ enters $w$ from the North and leaves to the East. Since $x(w)\geq 0$, this means every path that enters $w$ from the West must leave to the East.

It remains to show that there is a unique extension of the paths entering $w$ from the North. Indeed, the only possibilities to extend the paths $p_1,\ldots,p_k$ to form a nonkissing collection are to either
\begin{enumerate}
\item extend $p_1,\ldots,p_j$ East and $p_{j+1},\ldots,p_k$ South for some $j$, or
\item extend $p_1,\ldots,p_j$ East and $p_j,\ldots,p_k$ South for some $j$.
\end{enumerate}
The appropriate choice is determined by $x(w)$ and the coefficients $a_{p_i}$, as described above.
\end{proof}

As a complete simplicial fan, the intersection of any two cones in $\Fcal_{\lambda}$ is a face of each. In particular, for $g_p\neq 0$, the cone $C(F)$ contains $g_p$ if and only if $p$ is in $F$. Consequently, the identity $C(F\cap F^{\pr})=C(F)\cap C(F^{\pr})$ holds for any two faces $F,F^{\pr}$ of the reduced nonkissing complex. Thus, we have proved that $\Fcal_{\lambda}$ realizes the reduced nonkissing complex.

\subsection{Fan posets}\label{subsec_fan_poset}

Let $\Fcal$ be a complete fan in $\Rbb^n$, and let $P$ be a poset on the maximal cones of $\Fcal$. The pair $(\Fcal,P)$ is a \emph{fan poset} \cite{reading:lattice_congruence} if
\begin{itemize}
\item for all closed intervals $I\subseteq P$, the set $\bigcup_{C\in I}C$ is a polyhedral cone, and
\item for $C\in\Fcal$, the set of maximal cones in $\Fcal$ containing $C$ is a closed interval of $P$.
\end{itemize}

Our main result in this section is that the fan $\Fcal_{\lambda}$ defined in Section~\ref{subsec_fan_nonkissing} defines a fan poset.

\begin{remark}
Fan posets appear in the study of hyperplane arrangements and Coxeter combinatorics. For example, let $\mathcal{A}$ be a central hyperplane arrangement and let $\mathcal{F}$ be the complete fan consisting of the chambers of $\mathcal{A}$ and all faces of these chambers. Now let $\mathcal{P}(\mathcal{A}, B)$ be the poset of chambers of $\mathcal{A}$ with respect to a choice of base chamber $B$ of $\mathcal{A}$. The elements of $\mathcal{P}(\mathcal{A}, B)$ are the chamber of $\mathcal{A}$ where two chambers $R_1, R_2 \in \mathcal{P}(\mathcal{A}, B)$ satisfy $R_1 \leq R_2$ if $S(R_1) \subseteq S(R_2)$ where $S(R_i)$ is the set of hyperplanes of $\mathcal{A}$ separating $R_i$ from $B$. By \cite[Theorem 4.2.(i)]{reading:lattice_congruence}, $(\mathcal{F}, \mathcal{P}(\mathcal{A}, B))$ is a fan poset.

As another example, suppose $W$ is a finite Coxeter group and $c$ is choice of Coxeter element of $W$. Regard $W$ as a lattice whose partial order is the weak order. Let $\mathcal{F}_c$ denote the associated \emph{c-Cambrian fan} in the sense of \cite{reading.speyer:2009fans}, and let $W/\Theta_c$ be corresponding \emph{c-Cambrian lattice}. The data $(\mathcal{F}_c, W/\Theta_c)$ is a fan poset.\end{remark}

\begin{theorem}\label{thm_fan_order}
The pair $(\Fcal_{\lambda},\GT(\lambda))$ is a fan poset.
\end{theorem}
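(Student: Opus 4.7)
The plan is to verify the two defining conditions of a fan poset directly, leveraging the biclosed-set embedding $\phi : \GT(\lambda) \hookrightarrow \Bic(\lambda)$ from Proposition~\ref{sub_quot_phi_eta} to translate lattice-theoretic properties of $\GT(\lambda)$ into geometric properties of $\Fcal_{\lambda}$. Each covering relation $F \stackrel{s}{\ra} F^{\pr}$ of $\GT(\lambda)$ corresponds to the ridge $C(F) \cap C(F^{\pr})$ of $\Fcal_{\lambda}$, which I will label by the segment $s$ of the flip. The proof will treat condition (2) as a lattice-theoretic claim about completions of a nonkissing collection, and condition (1) inductively from a single-flip convexity lemma.

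For condition (2), fix a face $C \in \Fcal_{\lambda}$, which corresponds to a nonkissing collection $G$ of boundary paths. The maximal cones containing $C$ are precisely $\{C(F) : G \subseteq F \in \GT(\lambda)\}$. I will show this set forms an interval in $\GT(\lambda)$ with minimum $F_{\min} = \bigwedge\{F : G \subseteq F\}$ and maximum $F_{\max} = \bigvee\{F : G \subseteq F\}$. The crucial step is showing that ``containing $G$'' is preserved by meets and joins in $\GT(\lambda)$; via $\phi$ this reduces to showing that whenever $A_p \subseteq \phi(F_1) \cap \phi(F_2)$ for all $p \in G$, then $A_p$ is also contained in both $\phi(F_1 \wedge F_2)$ and $\phi(F_1 \vee F_2)$. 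This follows from Lemma~\ref{Lemma_meet_in_bic} together with the explicit description of meets and joins in $\Bic(\lambda)$ from Theorem~\ref{thm_bic_prop}.

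For condition (1), I proceed by induction on the length of the interval $[F, F^{\pr}]$ in $\GT(\lambda)$. The key geometric lemma, proved directly, is that a single flip yields a convex union: if $F \stackrel{s}{\ra} F_1$ with $F \setm \{p\} = F_1 \setm \{q\}$, then $C(F) \cup C(F_1)$ is a polyhedral cone. This holds because $g_p$ and $g_q$ lie on opposite sides of the hyperplane spanned by the common ridge $\{g_r : r \in F \cap F_1\}$, with $g_p + g_q$ decomposing as a nonnegative combination of those common rays (geometrically expressing how the two paths recombine to eliminate the kissing segment $s$). For the inductive step, I decompose $[F, F^{\pr}] = \{F\} \cup [F_1, F^{\pr}]$ along a cover $F \lessdot F_1$, apply induction to the shorter interval $[F_1, F^{\pr}]$, and show that $C(F)$ glues to $\bigcup_{G \in [F_1, F^{\pr}]} C(G)$ along the single ridge $C(F) \cap C(F_1)$. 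The main obstacle will be this gluing step: it requires showing that the only codimension-$1$ face of $C(F)$ that meets the rest of the interval is $C(F) \cap C(F_1)$, i.e., that no other flip out of $F$ leads into $[F, F^{\pr}]$. This should follow from the compatibility between the combinatorial flip labels and the shard structure, with the key input being Lemma~\ref{cl_join-irreducible_label}: an upward flip out of $F$ lands in $[F, F^{\pr}]$ iff its join-irreducible label $\eta(A_s)$ lies under $F^{\pr}$, and within the simplex $C(F)$ this accounts for exactly one facet.
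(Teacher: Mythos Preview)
Your plan for condition (2) is close in spirit to the paper's Lemma~\ref{lem_fan_poset_star}, but it is incomplete as stated. Two issues: first, the characterization ``$p\in F$'' is not equivalent to $A_p\subseteq\phi(F)$ alone---you also need $K_p\cap\phi(F)=\emptyset$ (this is Lemma~\ref{lem_AK}). Tracking only $A_p$ would, for instance, leave you unable to show that joins preserve membership of $p$. Second, showing that the set $\{F:\ G\subseteq F\}$ is closed under meets and joins only makes it a sublattice; it does not make it an interval. You must also show order-convexity: if $F_{\min}\leq F\leq F_{\max}$ then $G\subseteq F$. Both gaps are fixable once you carry $K_p$ along with $A_p$, since then ``$G\subseteq F$'' becomes ``$\phi(F)\in\bigcap_{p\in G}[A_p,\Seg(\lambda)\setm K_p]$'', which is visibly an order-convex condition. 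The paper sidesteps this by using the quotient map $\eta$ rather than the embedding $\phi$: the image of an interval under a lattice quotient is automatically an interval.

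Your approach to condition (1), however, has a genuine error that I do not see how to repair. The decomposition $[F,F^{\pr}]=\{F\}\cup[F_1,F^{\pr}]$ is simply false whenever $F$ has more than one upper cover inside $[F,F^{\pr}]$---for example, take $F=\hat 0$ and $F^{\pr}=\hat 1$. You recognize this as the ``main obstacle'' and propose to resolve it via Lemma~\ref{cl_join-irreducible_label}, but that lemma only tells you that each upward flip $F\stackrel{s}{\ra}F_i$ satisfies $F_i\leq F^{\pr}$ iff $\eta(A_s)\leq F^{\pr}$; it gives no uniqueness. There can be many ascent labels $s$ with $\eta(A_s)\leq F^{\pr}$, and correspondingly many facets of $C(F)$ shared with other cones in the interval. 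Even granting your single-flip convexity lemma, the induction does not assemble: $[F,F^{\pr}]\setm\{F\}$ is not a shorter interval, and gluing $C(F)$ to a union of overlapping sub-interval cones along several ridges need not produce a convex set. The paper's proof of Lemma~\ref{lem_fan_poset_cone} avoids induction entirely: it writes down an explicit polyhedral cone $C$ via inequalities $\alpha_t(x)\geq 0$ for $t\in A_s,\ s\in\Des(F_1)$ and $\alpha_t(x)\leq 0$ for $t\in K_s,\ s\in\Asc(F_2)$, and then proves both containments $C\supseteq\bigcup C(F)$ and $C\subseteq\bigcup C(F)$ directly.
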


We divide the proof into two parts as Lemma~\ref{lem_fan_poset_star} and Lemma~\ref{lem_fan_poset_cone}. The proofs of these statements rely on the following lemma proved in \cite[Claim 8.2]{mcconville:2015lattice}.

\begin{lemma}\label{lem_AK}
Let $X\in\Bic(\lambda)$. For $p\in\eta(X)$, we have $A_p\subseteq X$ and $K_p\cap X=\emptyset$.
\end{lemma}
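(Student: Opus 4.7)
The plan is to unpack the definition of $\eta$ and reduce both inclusions to single applications of biclosedness. Fix a vertical edge $e = (v_{j-1}, v_j)$ and write $p = p_e = (v_0, \ldots, v_l)$. The construction of $\eta$ prescribes two families of explicit ``pivot'' memberships in $X$: above $e$, for each $1 \le i \le j-1$, the pivot $(v_i, \ldots, v_{j-1})$ lies in $X$ if and only if the step entering $v_i$ is South; below $e$, for each $j \le k < l$, the pivot $(v_j, \ldots, v_k)$ lies in $X$ if and only if the step exiting $v_k$ is East. These pivots are my only direct source of $X$-information, and the whole strategy is to use them to read off the membership of an arbitrary subsegment of $p$.

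For $A_p \subseteq X$, let $t = (v_a, \ldots, v_b)$ be an SW-subsegment, so the step into $v_a$ is South and the step out of $v_b$ is East. I would split on the position of $t$ relative to $e$. If $t$ straddles $e$ (that is, $a \le j-1 < j \le b$), then $t = (v_a, \ldots, v_{j-1}) \circ (v_j, \ldots, v_b)$ decomposes as the concatenation of two pivots, both of which lie in $X$ by the turning hypotheses, so closure of $X$ gives $t \in X$. If $t$ lies strictly above $e$, then the pivot $(v_a, \ldots, v_{j-1}) \in X$ factors as $t \circ (v_{b+1}, \ldots, v_{j-1})$, where the right factor is a pivot lying in $X^c$; closure of $X^c$ rules out $t \in X^c$, forcing $t \in X$. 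The strictly below case is symmetric, decomposing the pivot $(v_j, \ldots, v_b) \in X$ as $(v_j, \ldots, v_{a-1}) \circ t$ against a pivot in $X^c$.

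The inclusion $K_p \cap X = \emptyset$ is obtained by exactly the same three-case decomposition, but with the roles of $X$ and $X^c$ swapped: an NE-subsegment turns East-in, South-out, which inverts the membership of every pivot, so closure of $X^c$ settles the straddling case while closure of $X$ handles the strictly above and strictly below cases. The main obstacle is purely bookkeeping: I would need to verify that the case analysis is exhaustive and that each decomposition is a legitimate concatenation of segments. In particular, the turning constraints eliminate some would-be boundary configurations a priori (for example, an SW-subsegment of $p$ cannot have $b = j-1$, since the exit step at $v_b$ would then be the South step $e$ rather than East), so the enumeration of cases has to be checked carefully. Once this is in place, each case closes with a single application of closure of $X$ or closure of $X^c$.
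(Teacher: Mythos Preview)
Your argument is correct. The paper itself does not prove this lemma; it simply cites \cite[Claim 8.2]{mcconville:2015lattice}. Your direct unpacking of the definition of $\eta$ together with a three-case analysis (straddling $e$, strictly above $e$, strictly below $e$) and one application of closedness of $X$ or of $X^c$ per case is exactly the natural proof, and it goes through as you describe.

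Two small bookkeeping points worth recording when you write it out in full. First, the degenerate sub-cases where one factor of your concatenation is empty are genuine and should be handled explicitly: for $t\in A_p$ strictly below $e$ with $a=j$, the segment $t$ \emph{is} the pivot $(v_j,\ldots,v_b)$ and lies in $X$ directly; dually, for $t\in K_p$ strictly above $e$ with $b=j-1$, the segment $t$ is the pivot $(v_a,\ldots,v_{j-1})$ and lies in $X^c$ directly. Second, the boundary exclusions are not symmetric between $A_p$ and $K_p$: as you note, an SW-subsegment cannot have $b=j-1$, and dually an NE-subsegment cannot have $a=j$; but an NE-subsegment \emph{can} have $b=j-1$ and an SW-subsegment \emph{can} have $a=j$. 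So the ``exactly the same three-case decomposition'' for $K_p$ has its degenerate sub-cases sitting at the opposite ends from those for $A_p$. None of this affects the logic, but it is the sort of asymmetry that is easy to get wrong in the write-up.
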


\begin{lemma}\label{lem_fan_poset_star}
For $F\in\wtil{\Delta}^{NK}(\lambda)$, the set of facets containing $F$ is an interval of $\GT(\lambda)$.
\end{lemma}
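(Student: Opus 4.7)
The plan is to exhibit a minimum $G_-$ and maximum $G_+$ for $\mathcal{F}_F:=\{G\in\GT(\lambda): F\subseteq G\}$ and then show $\mathcal{F}_F=[G_-,G_+]$ as subsets of $\GT(\lambda)$. The key leverage is the sublattice embedding $\phi\colon\GT(\lambda)\hookrightarrow\Bic(\lambda)$ of Proposition~\ref{sub_quot_phi_eta} together with the following sharp biclosed characterization of path membership.

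First I will establish: for a facet $G$ and a boundary path $p$, one has $p\in G$ if and only if $A_p\subseteq\phi(G)$ and $K_p\cap\phi(G)=\emptyset$. The forward direction is Lemma~\ref{lem_AK}. For the converse, suppose $A_p\subseteq\phi(G)$, $K_p\cap\phi(G)=\emptyset$, and $p\notin G$. Because $G$ is a facet (maximal nonkissing collection) of the reduced nonkissing complex, some $p'\in G$ must kiss $p$ along a common subsegment $u$. In one orientation $u\in K_p\cap A_{p'}$: Lemma~\ref{lem_AK} gives $A_{p'}\subseteq\phi(G)$, so $u\in\phi(G)$, contradicting $K_p\cap\phi(G)=\emptyset$. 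In the other orientation $u\in A_p\cap K_{p'}$: then $u\in\phi(G)$ from $A_p\subseteq\phi(G)$ and simultaneously $u\notin\phi(G)$ from $K_{p'}\cap\phi(G)=\emptyset$, again a contradiction.

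Next I set $G_-:=\bigvee_{p\in F}\eta(A_p)$ and $G_+:=\bigvee\mathcal{F}_F$ in $\GT(\lambda)$. Since $\phi$ preserves joins, $\phi(G_-)=\bigvee_{p\in F}A_p$, so $A_p\subseteq\phi(G_-)$ is automatic for each $p\in F$. To apply the characterization and conclude $F\subseteq G_-$, I verify $K_p\cap\phi(G_-)=\emptyset$ as follows: if some $s\in K_p$ could be written as a concatenation $s=s_1\circ\cdots\circ s_k$ with $s_i\in A_{p_i}$ and $p_i\in F$, then walking $p$ along the factors, I pinpoint either the last factor $s_k$ (when every transition between consecutive factors is to the East) or the earliest factor $s_j$ whose successor starts South of its end; on that factor $p$ enters from the West and leaves to the South while the corresponding $p_k$ or $p_j$ enters from the North and leaves to the East, producing a kiss between two paths of $F$ and a contradiction. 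The same argument, applied to concatenations whose factors now lie in $A_q$ for $q$ in various facets $G\in\mathcal{F}_F$ (each containing $F$, hence containing $p$, so a kiss still contradicts nonkissingness of that $G$), yields $K_p\cap\phi(G_+)=\emptyset$ and hence $F\subseteq G_+$. The inequality $G_-\leq G$ for $G\in\mathcal{F}_F$ is immediate from $\phi(G)\supseteq A_p$ (Lemma~\ref{lem_AK}), and $G_+$ is the maximum by construction.

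For convexity, any $G\in\GT(\lambda)$ with $G_-\leq G\leq G_+$ satisfies $\phi(G_-)\subseteq\phi(G)\subseteq\phi(G_+)$ since $\phi$ is an order embedding. For each $p\in F$ this gives $A_p\subseteq\phi(G_-)\subseteq\phi(G)$ and $K_p\cap\phi(G)\subseteq K_p\cap\phi(G_+)=\emptyset$, so the biconditional yields $p\in G$. Therefore $F\subseteq G$, and $\mathcal{F}_F=[G_-,G_+]$. The main obstacle is the kissing argument used to force $K_p\cap\phi(G_\pm)=\emptyset$: it requires tracking the orientations of $p$ along a possibly long concatenation of SW-factors and uniformly handling the case analysis on whether consecutive factors meet horizontally or vertically, in each case identifying the factor along which the kiss is forced.
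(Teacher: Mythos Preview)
Your proof is correct. Both yours and the paper's hinge on the same characterization---that a path $p$ belongs to a facet $G$ precisely when $A_p\subseteq\phi(G)$ and $K_p\cap\phi(G)=\emptyset$---and in both cases this comes from Lemma~\ref{lem_AK} together with a kissing argument. The paper, however, packages things differently: it first reduces to a single path $p$ using that $\wtil{\Delta}^{NK}(\lambda)$ is flag and that an intersection of closed intervals in a finite lattice is again a closed interval, and then identifies $\stars(\{p\})$ with the image under the quotient map $\eta$ of the interval $[A_p,\ \Seg(\lambda)\setminus K_p]$ in $\Bic(\lambda)$. This sidesteps your explicit construction of $G_+=\bigvee\mathcal{F}_F$ and the accompanying multi-factor kissing argument for $K_p\cap\phi(G_+)=\emptyset$, at the price of invoking the flag reduction. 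Your route has the merit of naming $G_-$ and $G_+$ concretely inside $\GT(\lambda)$; the paper's route has the merit of simultaneously identifying $\stars(\{p\})$ with a specific quotient interval of $\Bic(\lambda)$. One minor caveat: your biconditional as stated fails for horizontal or vertical paths, since then $A_p=K_p=\emptyset$ while $p$ is never in a facet of the reduced complex; since you only apply it to $p\in F\in\wtil{\Delta}^{NK}(\lambda)$ this does no harm, but the hypothesis that $p$ is neither horizontal nor vertical should be recorded.
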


\begin{proof}
Let $F$ be a face of $\wtil{\Delta}^{NK}(\lambda)$. Let $\stars(F)$ be the set of facets containing $F$. We claim that $\stars(F)$ is a closed interval of $\GT(\lambda)$. Since $\wtil{\Delta}^{NK}(\lambda)$ is a flag complex,
$$\stars(F)=\bigcap_{p\in F}\stars(\{p\}).$$
In a finite lattice, the intersection of a collection of closed intervals is itself a closed interval. Hence, it suffices to show that $\stars(\{p\})$ is an interval.

Let $A_p$ and $K_p$ be SW-subsegments and NE-subsegments of $p$, respectively. Let $I=[A_p,\Seg(\lambda)\setm K_p]$ be an interval in $\Bic(\lambda)$. We show that $\eta(I)=\stars(\{p\})$, which implies $\stars(\{p\})$ is isomorphic to the quotient interval $[[A_p]_{\Theta},[\Seg(\lambda)\setm K_p]_{\Theta}]$.

Let $F^{\pr}\in\stars(\{p\})$. Since $\eta\circ\phi(F^{\pr})=F^{\pr}$, we have $A_p\subseteq\phi(F^{\pr})$ and $K_p\cap\phi(F^{\pr})=\emptyset$ by Lemma~\ref{lem_AK}. Hence, $\phi(F^{\pr})\in I$, and $F^{\pr}\in\eta(I)$.

Now let $X\in I$ be given. Let $e$ be a vertical edge of $\lambda$, and let $p_e$ be the top path at $e$ in $\eta(X)$. We show that $p$ is nonkissing with $p_e$. Since $\eta(X)$ is a maximal nonkissing collection, this would imply that $p\in\eta(X)$.

Suppose to the contrary that $p$ and $p_e$ kiss along a common segment $s$. Then $s$ is either a SW-subsegment of $p$ and a NE-subsegment of $p_e$ or vice versa. If $s$ is a SW-subsegment of $p$, then $s\in X$ since $A_p\subseteq X$. But $s\in K_{p_e}$ implies $s\notin X$ by Lemma~\ref{lem_AK}, a contradiction. Similarly, if $s$ is a NE-subsegment of $p$, then $s\notin X$ since $X\subseteq\Seg(\lambda)\setm K_p$. However, $s\in A_{p_e}$ implies $s\in X$ by Lemma~\ref{lem_AK}, a contradiction.

We have now established that $\eta(I)=\stars(\{p\})$, as desired.
\end{proof}

Using Theorem~\ref{thm_fan} with Lemma~\ref{lem_fan_poset_star}, for any face $F$ of the nonkissing complex, the subposet of $\GT(\lambda)$ on the facets of $\wtil{\Delta}^{NK}(\lambda)$ whose maximal cones in $\Fcal_{\lambda}$ contain $C(F)$ is a closed interval.

\begin{lemma}\label{lem_fan_poset_cone}
For any closed interval $I$ in $\GT(\lambda)$, the set $\bigcup_{F\in I}C(F)$ is a polyhedral cone.
\end{lemma}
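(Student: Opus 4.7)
I would proceed by induction on $|I|$, with the case $|I|=1$ being immediate since each $C(F)$ is a simplicial cone. For the inductive step, write $I = [F_1, F_2]$ and set $U := \bigcup_{F \in I} C(F)$. The strategy is to exhibit $U$ as an intersection of closed half-spaces, one for each ``boundary flip'' of $I$: that is, each covering relation $G \lessdot G'$ in $\GT(\lambda)$ with exactly one of $G, G'$ lying in $I$. For such a boundary flip, the ridge $C(G) \cap C(G')$ spans a hyperplane $H_{G,G'}$, and we let $H_{G,G'}^{+}$ be the closed half-space containing whichever of $C(G), C(G')$ is indexed by the element in $I$. Let $\mathcal{H}$ be the collection of all such half-spaces.

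The inclusion $U \subseteq \bigcap_{H \in \mathcal{H}} H$ is built into the construction: every $C(F)$ with $F \in I$ lies on the prescribed side of each boundary hyperplane. For the reverse inclusion, fix $x \in \bigcap_{H \in \mathcal{H}} H$; by Theorem~\ref{thm_fan} there is a unique facet $F(x)$ whose relatively open cone contains $x$ (and the argument extends to boundary points of cones by a limit/perturbation argument). To show $F(x) \in I$, I would perform a greedy walk in the fan starting from $F_1$: at each facet $F$, if $x \in C(F)$ stop, otherwise flip across any ridge of $C(F)$ whose opposite half-space contains $x$. The walk terminates at $F(x)$ because the fan is complete, finite, and simplicial. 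Each flip crosses a hyperplane in the direction of $x$; if a flip exited $I$, then it would cross a boundary hyperplane in the direction opposite to the prescribed side of $\mathcal{H}$, contradicting $x \in \bigcap_{H \in \mathcal{H}} H$. Hence the walk stays in $I$, yielding $F(x) \in I$ and thus $x \in U$.

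The main obstacle is ensuring that the walk step is well-defined and compatible with the lattice structure, i.e., that the hyperplanes through boundary ridges genuinely separate $I$ from its complement locally at every such ridge. This relies on Lemma~\ref{lem_fan_poset_star}, so that at each lower-dimensional cone the set of containing maximal cones is itself an interval of $\GT(\lambda)$, together with the segment-labeling of flips from Lemma~\ref{cl_join-irreducible_label} and the congruence structure in Theorem~\ref{thm_congruences}. A convenient route is to translate the interval $[F_1, F_2]$ via the sublattice embedding $\phi$ of Proposition~\ref{sub_quot_phi_eta} into an interval in $\Bic(\lambda)$, where a boundary flip $G \lessdot G'$ is governed concretely by the segment $s$ distinguishing $\phi(G)$ from $\phi(G')$; this ensures that the ``boundary hyperplanes'' assemble coherently, rather than conflicting across different boundary flips labeled by comparable or equal segments.
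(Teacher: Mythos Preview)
Your approach differs substantially from the paper's. The paper does not collect half-spaces from arbitrary boundary flips of $I$, nor does it use a walk. Instead it writes down explicit inequalities using only the endpoints: for each $s\in\Des(F_1)$ and each SW-subsegment $t$ of $s$ the inequality $\alpha_t(x)\geq 0$, and dually for each $s\in\Asc(F_2)$ and each NE-subsegment $t$ of $s$ the inequality $\alpha_t(x)\leq 0$. Both inclusions are then checked directly through the biclosed-set machinery: for $F\in[F_1,F_2]$ and $p\in F$, one shows $\alpha_t(g_p)\geq 0$ using $A_t\subseteq A_s\subseteq\phi(F_1)\subseteq\phi(F)$ together with Lemma~\ref{lem_AK}; for the reverse inclusion one takes an interior point $x$, the unique facet $F$ with $x\in C(F)$, and proves $\phi(F_1)\subseteq\phi(F)$ by induction on segment length.

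Your argument has a genuine gap at the step you call ``built into the construction.'' For a boundary flip $G\lessdot G'$ labeled by $s$ with $G\in I$ and $G'\notin I$, Proposition~\ref{prop_ineq_fan} tells you only that $\alpha_s\leq 0$ on $C(G)$; it says nothing about $C(F)$ for other $F\in I$. To get $U\subseteq\bigcap_{H\in\mathcal{H}}H$ you must prove $\alpha_s(g_p)\leq 0$ for every path $p$ in every facet $F\in I$, and this is precisely the nontrivial content of the lemma. (Note that the paper needs the full family of subsegment inequalities, not just $\alpha_s$ itself, to make its forward inclusion go through; your single hyperplane per flip may not suffice even in principle.) The walk argument then inherits this problem: the assertion that a flip exiting $I$ crosses a boundary hyperplane ``in the direction opposite to the prescribed side of $\mathcal{H}$'' only yields a contradiction with $x\in\bigcap_{H\in\mathcal{H}}H$ if you already know that hyperplane bounds $U$ globally, which is the unproven forward inclusion again. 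Separately, termination of the greedy walk does not follow from completeness, finiteness, and simpliciality alone; you would need a monotone potential, which you have not supplied. Your closing paragraph points at $\phi$ and $\Bic(\lambda)$, which are indeed the right tools, but as written the proof is circular rather than inductive.
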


\begin{proof}

Let $F_1,F_2\in\GT(\lambda)$ such that $F_1\leq F_2$. Let $C$ be the set of $x\in\Rbb^{V^o}$ such that
\begin{itemize}
\item for $s\in\Des(F_1)$, if $t$ is a SW-subsegment of $s$ then $\alpha_t(x)\geq 0$, and
\item for $s\in\Asc(F_2)$, if $t$ is a NE-subsegment of $s$ then $\alpha_t(x)\leq 0$.
\end{itemize}

We prove that $C=\bigcup_{F\in[F_1,F_2]}C(F)$ holds. Let $F\in[F_1,F_2]$, and let $p\in F$. Let $s\in\Des(F_1)$, and let $t$ be a SW-subsegment of $s$. We claim that $\alpha_t(g_p)\geq 0$.

Assume to the contrary that $\alpha_t(g_p)<0$. Let $u$ be a maximal subsegment of $t$ contained in $p$ such that $p$ enters $u$ from the West and leaves to the South. Let $e$ be the vertical edge whose South endpoint is the initial vertex in $u$. Let $p_e$ be the top path in $F$ at $e$. We prove that $p$ and $p_e$ are kissing, contrary to the assumption that $p$ and $p_e$ are in a nonkissing collection $F$.

To prove that $p$ and $p_e$ are kissing, we recall that $F=\eta\circ\phi(F)$ and $\phi(F_1)\subseteq\phi(F)$. Since $u$ is a SW-subsegment of $t$, which is a SW-subsegment of $s$, we have
$$A_u\subseteq A_t\subseteq A_s\subseteq\phi(F_1).$$
In particular, every SW-subsegment of $u$ is contained in $\phi(F)$.

Let $v$ be the initial vertex of $u$. Recall that $\eta(\phi(F))$ defines the path $p_e$ where for any interior vertex $v^{\pr}$ weakly South-West of $v$, the path $p_e$ leaves the segment $p_e[v,v^{\pr}]$ to the East if $p_e[v,v^{\pr}]$ is in $\phi(F)$, and it leaves to the South otherwise. Since every SW-subsegement of $u$ is in $\phi(F)$, the path $p_e$ must leave $u$ to the East. But this means that $p$ and $p_e$ are kissing along an initial subsegment of $u$.

By applying the same argument to $\lambda^{\tr}$, we may conclude that $C$ contains $\bigcup_{F\in[F_1,F_2]}C(F)$.

Now let $x$ be an interior point of $C$, and let $F$ be any facet of $\wtil{\Delta}^{NK}(\lambda)$ such that $C(F)$ contains $x$. We prove that $F_1\leq F$, and deduce $F\leq F_2$ by duality.

We prove that every segment $s$ in $\phi(F_1)$ is in $\phi(F)$ by induction on the length of $s$. Let $s\in\phi(F_1)$. Since $\phi(F_1)$ is the minimum element in its $\Theta$-equivalence class, every SW-subsegment of $s$ is in $\phi(F_1)$.

There exists a decomposition of $s$ into subsegments $s=s_1\circ\cdots\circ s_l$ such that each $s_i$ is a SW-subsegment of some $s^{\pr}\in\Des(F_1)$. If $l>1$, then $\phi(F)$ contains each $s_i$ by the induction hypothesis. Since $\phi(F)$ is closed, this means $s\in\phi(F)$.

Now suppose $l=1$, and let $s^{\pr}\in\Des(F_1)$ be a segment such that $s\in A_{s^{\pr}}$. Then $\alpha_s(x)>0$ since $x$ is an interior point of $C$. Hence, there exists a path $p\in F$ such that $\alpha_s(g_p)>0$. Let $t$ be a maximal subsegment of $s$ contained in $p$ such that $p$ enters $t$ from the North and leaves to the East. Then $t$ is a SW-subsegment of $p$, so $t\in\phi(F)$. Since $t$ is a NE-subsegment of $s$, we have $s=t_1\circ t\circ t_2$ where $t_1$ and $t_2$ are (possibly empty) SW-subsegments of $s$. Hence, $t_1,t_2\in\phi(F)$ by induction. Since $\phi(F)$ is closed, $s\in\phi(F)$ holds.

We have shown that $\phi(F_1)\subseteq\phi(F)$, so $F_1\leq F$. By applying the same proof to $\lambda^{\tr}$, we get $F\leq F_2$. Hence, the interior of $C$ is covered by the union of $C(F)$ for $F\in[F_1,F_2]$. Since $C$ is the closure of its interior and $\bigcup_{F\in[F_1,F_2]}C(F)$ is a closed set contained in $C$, they must be equal.
\end{proof}

The proof of Theorem~\ref{thm_fan_order} is now complete.

\subsection{Shards in the Grid-associahdron fan}

In this section, we determine the ridges of the Grid-associahedron fan. These ridges may be grouped together into codimension 1 cones called \emph{shards}, which are in bijection with segments.

We identify $V^o$ with the set of elementary basis vectors of $\Rbb^{V^o}$. For an interior vertex $v$, let $\alpha_v$ be the linear functional on $\Rbb^{V^o}$ such that $\alpha_v(v)=1$ and $\alpha_v(u)=0$ for $u\neq v$. For each segment $s\in S$, let $\alpha_s$ be the linear functional
$$\alpha_s=\sum_{v\in s}\alpha_v.$$ We use these linear functionals to identify the walls of the cones $C(F)$ in the following proposition.

\begin{figure}
$$\includegraphics[scale=1]{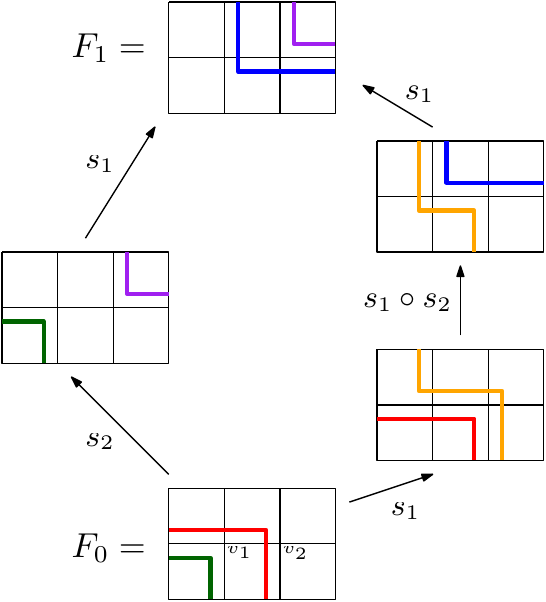} \ \ \ \ \ \raisebox{.18in}{\includegraphics[scale=2]{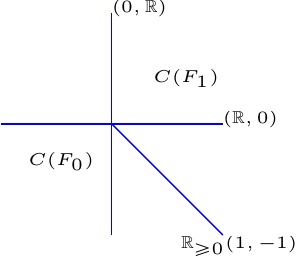}}$$
\caption{{The Grid-Tamari order $\GT(\lambda)$ and its Grid-associahedron fan $\mathcal{F}_\lambda$ when $\lambda$ is a $2\times 3$ rectangle. In $\lambda$, $s_1 = (v_1)$ and $s_2 = (v_2)$. In $\mathcal{F}_\lambda$, the horizontal (resp., vertical) hyperplane is the $v_1$-axis (resp., $v_2$-axis).}}
\label{walls_ex}
\end{figure}

\begin{proposition}\label{prop_ineq_fan}
For a facet $F\in\wtil{\Delta}^{NK}(\lambda)$, the cone $C(F)$ is defined by the inequalities
\begin{align*}
\alpha_s(x) &\leq 0\ \mbox{ if }s\in\Asc(F), \mbox{ and}\\
\alpha_s(x) &\geq 0\ \mbox{ if }s\in\Des(F).
\end{align*} 
Consequently, the map $X \mapsto C(F)$ sending face $X \in \Gamma^{NF}(\lambda)$ to the cone $C(F) \in \mathcal{F}_\lambda$ where $X = \Des(F)$ is a bijection.
\end{proposition}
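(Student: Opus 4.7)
The plan is to analyze $\alpha_s(g_p)$ locally for each $p \in F$ and each $s \in \Asc(F) \cup \Des(F)$, show that every such inequality is facet-defining for $C(F)$, and then extract the bijection from results already established.

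The first step is to derive a local formula for $\alpha_s(g_p)$. Writing $\chi_e(p) \in \{0,1\}$ for the indicator that the boundary path $p$ traverses an edge $e$, and letting $e^N_v$ and $e^S_v$ denote the vertical edges of $\lambda$ incident to $v$ from above and below, a direct check on the possible behaviours of $p$ at an interior vertex yields
$$g_p(v) = \chi_{e^N_v}(p) - \chi_{e^S_v}(p).$$
Summing over $v \in s$ and reindexing by vertical edges of $p$ shows that each vertical edge of $p$ whose bottom endpoint lies in $s$ but whose top does not contributes $+1$ to $\alpha_s(g_p)$, each edge with top in $s$ but bottom outside contributes $-1$, and all other edges contribute $0$. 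In particular, $\alpha_s(g_p) = +1$ whenever $s \in A_p$ and $\alpha_s(g_p) = -1$ whenever $s \in K_p$; more generally, $\alpha_s(g_p)$ decomposes as a signed sum over the maximal common subsegments (``pieces'') of $p \cap s$, each piece contributing $+1$ if $p$ enters $s$ from the North at a vertex where $s$ does not, $-1$ if $p$ exits $s$ to the South at a vertex where $s$ does not, and $0$ otherwise.

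Next I would prove the inequality claim for $s \in \Des(F)$; the case $s \in \Asc(F)$ is dual. Let $F' \stackrel{s}{\ra} F$ be the corresponding flip, $p_0 \in F$ the path with $s \in A_{p_0}$, and $p_0' \in F'$ its kissing partner with $s \in K_{p_0'}$. The formula gives $\alpha_s(g_{p_0}) = +1$; for $p \in F$ with $p \neq p_0$, the goal is to show every piece of $p \cap s$ contributes $0$. A negative piece would yield a subsegment $u \subseteq s$ with $u \in K_p$; but $p_0$ traces $s$ with a North-entry and an East-exit, and hence traces $u$ with $u \in A_{p_0}$ regardless of whether $u$ meets the endpoints of $s$, exhibiting $p$ and $p_0$ as kissing along $u$ and contradicting $p, p_0 \in F$. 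A positive piece would yield $u \subseteq s$ with $u \in A_p$; if $u = s$ with $p \neq p_0$ then $p_0'$ would kiss $p$ along $s$, precluding the flip defining $s \in \Des(F)$, and if $u \subsetneq s$ then $p_0'$, which traces $s$ with a West-entry and a South-exit, traces $u$ with $u \in K_{p_0'}$ by the same endpoint analysis, yielding a kissing of $p_0'$ with $p$ along $u$ and contradicting that $p_0'$ is nonkissing with $F \setm \{p_0\}$. Hence $\alpha_s(g_p) = 0$ for all $p \in F \setm \{p_0\}$, so $\alpha_s$ is facet-defining for $C(F)$: it vanishes on the wall generated by $\{g_p : p \in F,\ p \neq p_0\}$ and is strictly positive on $g_{p_0}$.

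Combining the two cases, each $s \in \Asc(F) \cup \Des(F)$ supplies a distinct facet-defining inequality of $C(F)$, and since $C(F)$ is a simplicial cone with exactly $|V^o|$ walls and $|\Asc(F)| + |\Des(F)| = |V^o|$, these inequalities form a complete defining set, proving the first assertion. The bijection claim then follows by composing the bijection $F \leftrightarrow \Des(F)$ between $\GT(\lambda)$ and faces of $\Gamma^{NF}(\lambda)$ furnished by Corollary~\ref{cor_lower_cover} with the bijection $F \mapsto C(F)$ between facets of $\wtil{\Delta}^{NK}(\lambda)$ and maximal cones of $\Fcal_\lambda$ supplied by Theorem~\ref{thm_fan}. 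The main technical obstacle is the boundary cases in the piece analysis, where $u$ shares an endpoint with $s$: the direct kissing of $p$ with $p_0$ can fail because one of them extends past the common endpoint in a compatible direction, and one must instead invoke $p_0'$ together with the defining property of a flip, namely that $p_0'$ is nonkissing with every other path of $F$.
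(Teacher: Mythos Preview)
Your proof is correct and follows essentially the same approach as the paper. Both arguments identify, for each $s$ labeling a flip, the unique path in $F$ on which $\alpha_s$ takes the value $\pm 1$, and then show $\alpha_s(g_q)=0$ for every other $q\in F$ by observing that a nonzero contribution along a maximal common subsegment $u$ would force $q$ to kiss either the outgoing path $p_0$ or the incoming path $p_0'$ of the flip; the paper phrases this as ``if $q$ enters $s$ from the North it must leave to the South (else it kisses $p$), and if it enters from the West it must leave to the East (else it kisses $p'$)'', which is exactly your piece analysis without the preliminary edge-indicator formula $g_p(v)=\chi_{e_v^N}(p)-\chi_{e_v^S}(p)$. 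Your derivation of the bijection via Corollary~\ref{cor_lower_cover} and Theorem~\ref{thm_fan} is also what the paper intends, though it leaves this step implicit.
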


\begin{proof}
Let $F^{\pr}$ be a facet adjacent to $F$, and let $s\in\Seg(\lambda)$ such that $F\stackrel{s}{\ra}F^{\pr}$. Let $F^{\pr}=F\setm\{p\}\cup\{p^{\pr}\}$ for some boundary paths $p,p^{\pr}$. The paths $p$ and $p^{\pr}$ kiss along $s$ by the definition of the edge labeling. Furthermore, $p$ enters $s$ from the West and leaves to the South, whereas $p^{\pr}$ enters $s$ from the North and leaves to the East.

Set $R=F\setm\{p\}$. We show that $\alpha_s(x)=0$ for $x\in C(R)$. Let $q\in F\setm\{p\}$. We prove that $\alpha_s(g_q)=0$, which will imply that $\alpha_s$ is $0$ on all of $C(R)$.

If $q$ enters $s$ from the North (not necessarily at the beginning of $s$), then it must leave $s$ to the South, as otherwise it would kiss $p$. If $q$ enters $s$ from the West then it must leave $s$ to the East, as otherwise it would kiss $p^{\pr}$. In either case, $q$ must turn an even number of times at vertices of $s$, alternating between positive and negative turns. Hence, $\alpha_s(g_q)=0$.

Since $p$ enters $s$ from the West and leaves to the South, it has one more negative than positive turn, so $\alpha_s(g_p)=-1$. Consequently, $\alpha_s(x)\leq 0$ for $x\in F$.

By a similar argument, if $F^{\pr}$ is adjacent to $F$ and $s$ is a segment with $F^{\pr}\stackrel{s}{\ra}F$, then $\alpha_s(x)\geq 0$ for $x\in F$. As we have found inequalities for all of the ridges of $F$, the listed inequalities suffice to define the cone $F$.
\end{proof}

For a segment $s$, let $H_s$ be the hyperplane defined by $\alpha_s(x)=0$. Let $\Delta_s$ be the set of faces $F$ of $\wtil{\Delta}^{NK}(\lambda)$ such that for all $p\in F$, if $t$ is a maximal subsegment of $s$ contained in $p$, then $p$ turns at an even number of vertices in $t$. Since $p$ alternates between positive and negative turns along the segment $t$, we have $\alpha_t(g_p)=0$. By summing over all subsegments of $s$ contained in $p$, we have $\alpha_s(g_p)=0$ as well. In particular, $C(F)\subseteq H_s$ for all $F\in\Delta_s$.

Let $\Sigma(s)=\bigcup_{F\in\Delta_s}C(F)$. We prove that $\Sigma(s)$ is a cone of codimension 1 in the following proposition.

\begin{proposition}\label{prop_shards}
The set $\Sigma(s)$ is the cone supported by $H_s$ defined by the inequalities:
\begin{itemize}
\item $\alpha_t(x)\geq 0$ if $t$ is a SW-subsegment of $s$, and
\item $\alpha_t(x)\leq 0$ if $t$ is a NE-subsegment of $s$.
\end{itemize}
\end{proposition}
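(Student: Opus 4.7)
The plan is to prove the two inclusions $\Sigma(s) \subseteq C$ and $C \subseteq \Sigma(s)$, where $C$ denotes the cone defined by the inequalities in the proposition. Throughout I will use the following elementary identity: if a segment $r$ lies in a path $p$ as a subsegment, then $\alpha_r(g_p) = d_{\text{out}} - d_{\text{in}} \in \{-1,0,1\}$, where $d_{\text{in}}, d_{\text{out}} \in \{0,1\}$ encode the direction of travel of $p$ (with S$=0$, E$=1$) upon entering the first vertex and leaving the last vertex of $r$. This follows because the internal turns of $p$ along $r$ alternate in sign and telescope to the net change in direction. In particular, $\alpha_r(g_p) = 1$ exactly when $r \in A_p$, $\alpha_r(g_p) = -1$ exactly when $r \in K_p$, and $\alpha_r(g_p) = 0$ otherwise.

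For the inclusion $\Sigma(s) \subseteq C$, I fix $F \in \Delta_s$, $p \in F$, and a SW-subsegment $t$ of $s$, and decompose $\alpha_t(g_p) = \sum_i \alpha_{u_i \cap t}(g_p)$ over the maximal subsegments $u_i$ of $s$ contained in $p$. Each $u_i \cap t$ is itself contained in $p$, so the identity above applies. A brief case analysis on whether the endpoints of $u_i \cap t$ coincide with endpoints of $u_i$ or lie interior to $u_i$ finishes this direction: the SW-condition on $t$ (step of $s$ into $t$ is S, step out is E) pins down the local direction of $p$ at interior endpoints, while $\alpha_{u_i}(g_p) = 0$, coming from $F \in \Delta_s$, forces $d_{\text{in}} = d_{\text{out}}$ at $u_i$'s true endpoints. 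In each case one reads off $\alpha_{u_i \cap t}(g_p) \geq 0$, and summing gives $\alpha_t(g_p) \geq 0$. The NE case is symmetric, so $C(F) \subseteq C$ for all $F \in \Delta_s$.

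For $C \subseteq \Sigma(s)$, take $x \in C$ and let $F$ be the unique face with $x$ in the relative interior of $C(F)$, using Theorem~\ref{thm_fan}. I claim $F \in \Delta_s$. Suppose not: then some $p \in F$ has a maximal subsegment $u$ of $s$ in $p$ with $\alpha_u(g_p) = \pm 1$, say $\alpha_u(g_p) = +1$, so $u \in A_p$ and $p$ traverses $u$ from N to E. The maximality of $u$ in $s \cap p$ together with the N-entry and E-exit forces that, wherever they exist, the neighbors of $u$ in $s$ lie W of $u$'s left endpoint and S of $u$'s right endpoint---exactly the condition that $u$ be a NE-subsegment of $s$. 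Hence $x \in C$ forces $\alpha_u(x) \leq 0$.

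Writing $x = \sum_{p' \in F} a_{p'} g_{p'}$ with $a_{p'} > 0$, I finish by showing $\alpha_u(g_{p'}) \geq 0$ for every $p' \in F$, since this yields $\alpha_u(x) \geq a_p > 0$, a contradiction. For $p' \neq p$, I decompose $\alpha_u(g_{p'}) = \sum_j \alpha_{r_j}(g_{p'})$ over the maximal subsegments $r_j$ of $u$ in $p'$. If some $r_j \in K_{p'}$, so that $\alpha_{r_j}(g_{p'}) = -1$, then the same type of maximality argument used above (now with $r_j$ in $p'$ rather than $u$ in $p$), combined with the fact that $p$ traverses $u$ in SW fashion, pins down the local behavior of $p$ at the endpoints of $r_j$ and forces $r_j \in A_p$; then $p$ and $p'$ kiss along $r_j$, contradicting $F$ being nonkissing. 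The main obstacle I expect is precisely this last matching step: simultaneously exploiting the maximality of $r_j$ as a subsegment of $u$ in $p'$ and the SW-structure of $p$ along $u$ to conclude $r_j \in A_p$ whenever $r_j \in K_{p'}$.
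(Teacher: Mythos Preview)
Your proof is correct and follows essentially the same approach as the paper: both directions reduce to the observation that a path contributing the ``wrong'' sign along a SW- or NE-subsegment would kiss another path in $F$, contradicting that $F$ is nonkissing. The only cosmetic differences are that for $\Sigma(s)\subseteq C$ the paper argues by contradiction (locating a single bad maximal subsegment of $s$ in $p$) rather than your explicit decomposition over all $u_i$, and for $C\subseteq\Sigma(s)$ the paper takes the $\alpha_u(g_p)=-1$ case as WLOG while you take $+1$; your worry about the ``last matching step'' is unfounded, since maximality of $r_j$ in $u\cap p'$ together with $p'$ entering from the West forces $u$ (hence $p$) to come from the North at that vertex, and symmetrically at the other end.
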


\begin{proof}
We first prove that if $x\in \Sigma(s)$, then $x$ satisfies the desired inequalities.

Let $F\in\Delta_s$ and let $p\in F$ be given. Let $t$ be a SW-subsegment of $s$. We claim that $\alpha_t(g_p)\geq 0$ holds. If not, then there exists a subsegment $u$ of $t$ contained in $p$ such that $p$ enters $u$ from the West and leaves to the South. We choose $u$ to be a maximal such subsegment. Then $u$ is a SW-subsegment of $t$. But then $p$ turns at an odd numbers of vertices of $u$, which contradicts the assumption that $F\in\Delta_s$. By a similar argument, if $t$ is a NE-subsegment of $s$, then $\alpha_t(g_p)\leq 0$.

Now let $x\in\Rbb^n$ such that $\alpha_t(x)\geq 0$ if $t$ is a SW-subsegment of $s$ and $\alpha_t(x)\leq 0$ if $t$ is a NE-subsegment of $s$. Since $s$ is both a SW-subsegment and a NE-subsegment of itself, this forces $x\in H_s$. Let $F$ be the minimum face of $\wtil{\Delta}^{NK}(\lambda)$ such that $x\in C(F)$. We prove that $F\in\Delta_s$.

Let $p\in F$. If $p$ does not share any vertices with $s$, then we are done. Otherwise, let $t$ be a maximal subsegment of $s$ contained in $p$. Suppose $p$ turns at an odd number of vertices of $t$. Then either $p$ enters $t$ from the West and leaves to the South, or $p$ enters $t$ from the North and leaves to the East.

Suppose $p$ enters $t$ from the West and leaves to the South. Then $t$ is a SW-subsegment of $s$ and $\alpha_t(g_p)=-1$. Since $x\in C(F)$ and $\alpha_t(x)\geq 0$, there exists $q\in F$ such that $\alpha_t(g_q)>0$. In particular, there exists a NE-subsegment $u$ of $t$ contained in $q$ such that $q$ enters $u$ from the North and leaves to the East. But this implies $p$ and $q$ are nonkissing, contrary to the assumption that $F$ is nonkissing.

A similar contradiction may be reached under the assumption that $p$ enters $t$ from the North and leaves to the East. Therefore, $F\in\Delta_s$.\end{proof}

\begin{proposition}\label{prop_ridges_fan}
If $s$ is a segment and $F,F^{\pr}$ are adjacent facets of $\wtil{\Delta}^{NK}(\lambda)$ such that $F\stackrel{s}{\ra}F^{\pr}$, then $C(F\cap F^{\pr})\subseteq \Sigma(s)$.
\end{proposition}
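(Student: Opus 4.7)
My plan is to reduce the statement to showing $F\cap F'\in\Delta_s$, whence $C(F\cap F')\subseteq\Sigma(s)$ follows at once from the definition $\Sigma(s)=\bigcup_{G\in\Delta_s}C(G)$. Writing $F\cap F'=F\setm\{p\}=F'\setm\{q\}$, the edge labeling forces $p$ to enter $s$ from the West and leave to the South, and $q$ to enter $s$ from the North and leave to the East; in particular, both $p$ and $q$ contain the segment $s$.

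The core step is the following claim: for every $r\in F\cap F'$ and every maximal subsegment $t=(v_0,\ldots,v_m)$ of $s$ contained in $r$, the path $r$ enters and leaves $t$ in matching coordinate directions (either enter-from-N and leave-to-S, or enter-from-W and leave-to-E). I would prove this by contradiction. Suppose $r$ enters $t$ from the North at $v_0$ and leaves $t$ to the East at $v_m$. I will deduce that $p$ must enter $t$ from the West at $v_0$ and leave $t$ to the South at $v_m$, producing a kiss of $p$ with $r$ along $t$ and contradicting $p,r\in F$. For the deduction at $v_0$: if $v_0$ is the initial vertex of $s$ this is immediate. Otherwise let $v_0^-$ be the predecessor of $v_0$ within $s$ (necessarily either immediately North or immediately West of $v_0$); maximality of $t$ in $r\cap s$ forces $v_0^-\notin r$, while the hypothesis identifies $r$'s predecessor of $v_0$ as the unique vertex immediately North of $v_0$, so $v_0^-$ must be immediately West of $v_0$. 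Since $p$ follows $s$, it follows that $p$ enters $t$ from the West. The endpoint $v_m$ is handled symmetrically, and the remaining case (where $r$ enters $t$ from the West and leaves to the South) is ruled out analogously using $q\in F'$ in place of $p$.

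With the core claim in place, I would finish by a short parity argument: in either matching case, the turning vertices of $r$ along $t$ contribute alternating values $\pm 1$, and the matching entry/exit directions force these contributions to cancel in pairs, giving an even number of turning vertices of $r$ inside $t$. This verifies $F\cap F'\in\Delta_s$ and completes the proof.

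The main obstacle is the endpoint bookkeeping at $v_0$ and $v_m$: one must use the maximality of $t$ in $r\cap s$ precisely to pin down the direction in which $p$ (respectively $q$) must enter and leave $t$, which is what turns the kissing hypothesis into an outright contradiction. Once the endpoint directions of $p$ and $q$ on $t$ are extracted, the rest is essentially definitional.
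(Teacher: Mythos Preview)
Your proposal is correct and follows essentially the same approach as the paper: both arguments reduce to showing $R=F\cap F'\in\Delta_s$ by taking a maximal subsegment $t$ of $s$ contained in an arbitrary $r\in R$ and ruling out the two ``odd'' entry/exit patterns via a kiss with $p$ or with $q$. The paper's proof simply asserts the kiss (e.g.\ ``if $q$ enters $t$ from the North and leaves to the East, then $q$ kisses $p$''), whereas you spell out the endpoint bookkeeping that pins down the direction in which $p$ (resp.\ $q$) meets $t$, using maximality of $t$; this extra detail is sound and is exactly what the paper is leaving implicit.
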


\begin{proof}
Let $R=F\cap F^{\pr}$ and $p,p^{\pr}$ be paths such that $F=R\cup\{p\}$ and $F^{\pr}=R\cup\{p^{\pr}\}$. We prove that $R\in\Delta_s$.

Let $q\in R$. Suppose $q$ and $s$ have a nonempty intersection. Let $t$ be a maximal subsegment of $s$ such that $t$ is in $q$. Assume that $q$ turns at an odd number of vertices in $t$. If $q$ enters $t$ from the West and leaves to the South, then $q$ kisses $p^{\pr}$, which contradicts the assumption that $F^{\pr}$ is a nonkissing collection of paths. Similarly, if $q$ enters $t$ from the North and leaves to the East, then $q$ kisses $p$, which contradicts the assumption that $F$ is a nonkissing collection of paths. Hence, $q$ turns at an even number of vertices in $t$, and we deduce that $R\in\Delta_s$ holds.
\end{proof}

Proposition~\ref{prop_shards} and Proposition~\ref{prop_ridges_fan} imply that
$$\Sigma(s)=\bigcup\{C(R):\ R\ \mbox{is a ridge of }\wtil{\Delta}^{NK}(\lambda),\ C(R)\subseteq H_s\}.$$
We remark that there may be other faces of lower dimension supported by $H_s$ that are not in $\Sigma(s)$. If the shape $\lambda$ is sufficiently large, then a boundary path $p$ may intersect a segment $s$ in several places. For such a path, it is possible for $\alpha_s(g_p)$ to vanish, yet $\{p\}\notin\Delta_s$. However, any ridge supported by $H_s$ is in $\Sigma(s)$.

We call the cone $\Sigma(s)$ a \emph{shard}. Shards were originally introduced by Reading \cite{reading:lattice} in the following way. A \emph{real hyperplane arrangement} $\Acal$ is a finite set of hyperplanes in $\Rbb^n$. The arrangement $\Acal$ defines a complete fan on $\Rbb^n$ whose maximal faces are called \emph{chambers}. The arrangement is \emph{simplicial} if every chamber is a simplicial cone. Simplicial arrangements are exceedingly rare but include some signficant examples such as the reflection arrangement of a finite reflection group and the crystallographic arrangements as defined in {\cite{cuntz2011crystallographic}}.

Let $\Acal$ be a simplicial arrangement and $c_0$ a fixed chamber. Given a subspace $X$, let $\Acal_X$ be the set of $H\in\Acal$ such that $X\subseteq H$, and let $(c_0)_X$ be the unique chamber of $\Acal_X$ containing $c_0$. If $X=H\cap H^{\pr}$ for some $H^{\pr}\in\Acal\setm\{H\}$, then $H$ supports two faces of $\Acal_X$, which we call $H_X^+$ and $H_X^-$. Reading defined a \emph{shard} $C$ to be an inclusion-maximal cone supported by some $H\in\Acal$ such that for any $H^{\pr}\in\Acal\setm\{H\}$: if $X=H\cap H^{\pr}$ and $(c_0)_X$ is not incident to $H$, then $C$ is either supported by $H_X^+$ or $H_X^-$.

The set of chambers of $\Acal$ are partially ordered such that $c\leq c^{\pr}$ if every hyperplane $H\in\Acal$ separating $c_0$ and $c$ also separates $c_0$ and $c^{\pr}$. If $\Acal$ is simplicial, then this poset is a lattice \cite{bjorner.edelman.ziegler:lattice}. Under some additional hypotheses, the shards of $\Acal$ are in bijection with the join-irreducible lattice congruences of the poset of chambers \cite{reading:lattice_congruence}. \emph{Cambrian fans}, which realize the cluster complex, are constructed by deleting some of the shards of the reflection arrangement of a finite reflection group \cite{reading.speyer:2009fans}.

One obstruction to constructing our fans $\Fcal_{\lambda}$ in this manner is that the arrangement $\Acal_{\lambda}=\{H_s:\ s\in\Seg(\lambda)\}$ is \emph{not} simplicial in general. Furthermore, the poset of chambers of $\Acal_{\lambda}$ may not be a lattice. However, the poset of \emph{biclosed subsets} of $\Acal_{\lambda}$ does form a lattice. This is the lattice $\Bic(\lambda)$ from Section~\ref{subsec_gt_biclosed}. For a simplicial arrangement $\Acal$, the set of chambers are in bijection with biclosed subsets \cite[Theorem 5.4]{mcconville2014biclosed}, but in general these two collections differ.

As is true for Cambrian fans, the fan poset $(\Fcal_{\lambda},\GT(\lambda))$ associated to a shape $\lambda$ is a quotient lattice of the lattice of biclosed subsets of $\Acal_{\lambda}$. In the same way that a $W$-associahedron whose normal fan is a Cambrian fan may be constructed by removing facets of the $W$-permutahedron \cite[Theorem 3.4]{hohlweg2011permutahedra}, we conjecture that the Grid-associahedron may be constructed in the same manner.

\begin{conjecture}\label{conj_grid_associahedron}
There exists a polytope $P$ whose normal fan is $\Acal_{\lambda}$ such that the Grid-associahedron whose normal fan is $\Fcal_{\lambda}$ may be constructed by removing some facets from $P$.
\end{conjecture}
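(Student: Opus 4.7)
The plan is to adapt the Hohlweg--Lange--Thomas realization of the Coxeter associahedron from the permutahedron, since the conjecture asserts that $\mathcal{F}_\lambda$ relates to $\mathcal{A}_\lambda$ in exactly the way a Cambrian fan relates to the braid fan. The natural candidate for $P$ is the zonotope
$$Z_\lambda = \sum_{s \in \Seg(\lambda)} [0, \alpha_s] \subset \Rbb^{V^o},$$
whose normal fan coincides with $\mathcal{A}_\lambda$ since the $\alpha_s$ are precisely the normal vectors to the hyperplanes $H_s$. The facets of $Z_\lambda$ come in pairs indexed by segments: for each $s \in \Seg(\lambda)$, there is a ``positive'' facet supported by a translate of $H_s$ (with outer normal $+\alpha_s$) and a ``negative'' one (with outer normal $-\alpha_s$).

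To obtain the Grid-associahedron, I would remove one facet from each such pair, guided by Proposition~\ref{prop_shards}. The shard $\Sigma(s)$ lies on one side of $H_s$, determined by the sign conventions on SW- and NE-subsegments; I would keep the facet of $Z_\lambda$ whose outer normal agrees with the ray of $\mathcal{F}_\lambda$ supporting $\Sigma(s)$, and remove (push to infinity) the opposite one. The putative Grid-associahedron is then the $H$-polyhedron cut out by a carefully chosen subset of the inequalities defining $Z_\lambda$, one inequality per segment. The inequalities read off from Proposition~\ref{prop_ineq_fan} give precisely the collection of half-spaces indexed by $\Asc(F)$ and $\Des(F)$ for the facets $F$ of $\widetilde{\Delta}^{NK}(\lambda)$, which suggests that the normal cones of the resulting polyhedron are exactly the cones $C(F)$ of $\mathcal{F}_\lambda$.

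The crux of the verification breaks into two parts. Boundedness of the resulting polyhedron follows from the observation that the surviving outer normals positively span $\Rbb^{V^o}$: indeed the vectors $\{g_p\}$ are the extreme rays of $\mathcal{F}_\lambda$ and span $\Rbb^{V^o}$ positively by completeness of $\mathcal{F}_\lambda$ (Theorem~\ref{thm_fan}), and each such ray is paired with a surviving facet. The genuinely hard step is to show that the normal fan of the resulting polyhedron is $\mathcal{F}_\lambda$ rather than some strict refinement. The main obstacle will be producing a consistent system of right-hand-sides $c_s$ for which the codimension-$2$ faces of the $H$-representation behave correctly across each removed wall: at every ridge of $\mathcal{F}_\lambda$ one requires the two adjacent facets of the polytope to meet along a face of the correct dimension, while at each ``collapsed'' wall inside a cone $C(F)$ one requires the relevant inequalities to become redundant.

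I expect this step is best handled by exhibiting the polytope as a (possibly virtual) Minkowski sum indexed by segments, in the spirit of Postnikov's realization of type~$A$ associahedra as Minkowski sums of simplices. Concretely, one might try $\sum_{s} \Delta_s$, where $\Delta_s$ is the convex hull of the rays $g_p$ with $s \in A_p$, and check convexity/submodularity of the associated support function on $\mathcal{F}_\lambda$. If this decomposition works, the Minkowski differences $Z_\lambda \ominus \sum_s \Delta_s$ would give the geometric content of ``removing facets,'' and the proof would reduce to verifying submodularity along each ridge, an identity that should follow from the wideness axioms of Section~\ref{subsec_transitive} combined with the lattice-theoretic shard description of Theorem~\ref{thm_lattice_shard_intersection}.
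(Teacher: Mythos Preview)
The statement you are addressing is Conjecture~\ref{conj_grid_associahedron}, which the paper leaves open; there is no proof in the paper to compare against. What you have written is a plausible plan of attack modeled on the Hohlweg--Lange--Thomas construction, and you are candid that the hard step (verifying that the normal fan of the trimmed polytope is exactly $\mathcal{F}_\lambda$) is not carried out.

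There is, however, a concrete error in your setup. You assert that the facets of the zonotope $Z_\lambda=\sum_{s}[0,\alpha_s]$ come in pairs with outer normals $\pm\alpha_s$, one pair per segment. This is false in general: facets of a zonotope correspond to \emph{rays} of its normal fan, not to hyperplanes. The face of $Z_\lambda$ maximizing $\alpha_s$ has dimension equal to the dimension of the span of $\{\alpha_t:\langle\alpha_t,\alpha_s\rangle=0\}=\{\alpha_t:t\cap s=\emptyset\}$, which is $|V^o|-|s|$; this is a facet only when $s$ is lazy. For longer segments, $\alpha_s$ is not the outer normal of any facet of $Z_\lambda$. (Compare the type~$A_3$ permutahedron: six reflecting hyperplanes but fourteen facets.) So the recipe ``keep one facet from each $\pm\alpha_s$ pair'' does not parse, and the later paragraph about pushing the opposite facet to infinity does not describe a well-defined operation on $Z_\lambda$.

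The HLT analogy can still guide a correct formulation: one should identify the rays of the arrangement fan $\mathcal{A}_\lambda$, match them with facets of $Z_\lambda$, and specify which of those facets survive. A cleaner target is the half-space description suggested by Proposition~\ref{prop_ineq_fan}: for each segment $s$ keep exactly one of the inequalities $\alpha_s(x)\le c_s$ or $-\alpha_s(x)\le c_s'$, and show that suitable right-hand sides exist making the intersection a polytope with normal fan $\mathcal{F}_\lambda$. Your Minkowski-sum idea is a reasonable avenue for producing such $c_s$, but as written the proposal does not yet constitute a proof.
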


\subsection{Shard intersection order}\label{sec_shard}

The \emph{(geometric) shard intersection order} $\Psi^f(\lambda)$ is the poset of intersections of shards $\bigcap_{s\in I}\Sigma(s)$ for $I\subseteq\Seg(\lambda)$, ordered by \emph{reverse} inclusion. The poset $\Psi^f(\lambda)$ is a join-semilattice where the join of $Z,Z^{\pr}\in\Psi^f(\lambda)$ is equal to $Z\cap Z^{\pr}$. Since $\Psi^f(\lambda)$ has a bottom element, $\hat{0}=\bigcap_{s\in I}\Sigma(s),\ I=\emptyset$, this (finite) join-semilattice is a lattice.

We begin this section by proving that the set of shards containing a fixed point $x\in\Rbb^r$ is a wide set (Lemma~\ref{lem_wide_subset}). We divide the proof into several claims.

\begin{claim}\label{lem_inter_shards_1}
If $s,t,u$ are segments such that $s\circ t=u$, then
$$\Sigma(s)\cap\Sigma(t)=\Sigma(s)\cap\Sigma(u)=\Sigma(t)\cap\Sigma(u).$$
\end{claim}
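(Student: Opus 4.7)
The central algebraic identity driving the proof is the additivity
\[
\alpha_u \;=\; \alpha_s + \alpha_t,
\]
which holds because the vertex set of $u = s\circ t$ is the disjoint union of the vertex sets of $s$ and $t$. Combined with the definitions of the hyperplanes $H_s$, $H_t$, $H_u$, this immediately forces
\[
H_s\cap H_t \;=\; H_s\cap H_u \;=\; H_t\cap H_u
\]
as codimension-two subspaces, so each pairwise intersection of shards in the claim already lies inside one common subspace $L$. What remains is to verify that the three half-space systems from Proposition~\ref{prop_shards} cut out the same cone within $L$.

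The concatenation $u = s\circ t$ occurs in two geometric flavors, depending on whether $t$ begins immediately East or immediately South of the final vertex of $s$. These cases are interchanged by the transpose operation $\lambda\mapsto\lambda^{\tr}$ (together with the induced bijection $\Seg(\lambda)\to\Seg(\lambda^{\tr})$), which is compatible with the construction of shards, so I would fix the first. In that orientation $s$ is a SW-subsegment of $u$ and $t$ is a NE-subsegment of $u$, and each subsegment $v\subseteq u$ falls into exactly one of three classes: $v\subseteq s$, $v\subseteq t$, or $v = v_1\circ v_2$ crosses the junction with $v_1$ a suffix of $s$ and $v_2$ a prefix of $t$; in the last case additivity gives $\alpha_v = \alpha_{v_1}+\alpha_{v_2}$.

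With this trichotomy in place, I would build a short dictionary translating each SW- or NE-subsegment of $u$ into one or two subsegments of $s$ and $t$ of known SW/NE type, paying attention to how the junction vertex modifies the endpoint condition. Applying this dictionary together with $\alpha_s(x) = \alpha_t(x) = \alpha_u(x) = 0$ on $L$, each defining inequality of $\Sigma(u)$ at a point of $L$ becomes equivalent to an inequality (or sum of inequalities) from $\Sigma(s)$ and $\Sigma(t)$, giving the inclusion $\Sigma(s)\cap\Sigma(t)\subseteq\Sigma(u)$. The same case analysis, now using $\alpha_t = \alpha_u - \alpha_s$ (respectively $\alpha_s = \alpha_u - \alpha_t$) in place of the original identity, yields $\Sigma(s)\cap\Sigma(u)\subseteq\Sigma(t)$ and $\Sigma(t)\cap\Sigma(u)\subseteq\Sigma(s)$, which together produce the three-way equality.

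The step I expect to be the main obstacle is the bookkeeping for junction-crossing subsegments: for such a $v = v_1\circ v_2$ one must carefully determine which SW- or NE-types the factors $v_1\subseteq s$ and $v_2\subseteq t$ inherit from $v$'s classification in $u$, so that the sign of $\alpha_{v_1}(x) + \alpha_{v_2}(x)$ on $L$ can be recovered cleanly from the half-space conditions of $\Sigma(s)$ and $\Sigma(t)$. The analogous case analysis for subsegments entirely within $s$ or entirely within $t$ reduces almost immediately to the definitions, once one handles the special behavior of endpoints that meet the junction vertex.
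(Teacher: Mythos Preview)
Your approach is correct and essentially the same as the paper's: both fix the orientation $s\in A_u$, $t\in K_u$, use the additivity $\alpha_u = \alpha_s + \alpha_t$, and establish the three inclusions $\Sigma(s)\cap\Sigma(t)\subseteq\Sigma(u)$, $\Sigma(s)\cap\Sigma(u)\subseteq\Sigma(t)$, $\Sigma(t)\cap\Sigma(u)\subseteq\Sigma(s)$ by translating the SW/NE-subsegment inequalities from Proposition~\ref{prop_shards} through exactly the case analysis you describe. The paper carries out your dictionary explicitly (e.g., a SW-subsegment of $u$ decomposes as $s'\circ t'$ with $s'\in A_s$ and $t'\in A_t$ possibly empty; a SW-subsegment $t'$ of $t$ either lies in $A_u$ or contains the initial vertex of $t$, in which case $s\circ t'\in A_u$), confirming that the junction-crossing bookkeeping you flag resolves without incident.
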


\begin{proof}

Let $s,t,u$ be segments such that $s\circ t=u$. Without loss of generality, we may assume that $s\in A_u,\ t\in K_u$.

We first show that $\Sigma(s)\cap \Sigma(t)\subseteq \Sigma(u)$. Let $x\in\Sigma(s)\cap\Sigma(t)$. Then $\alpha_u(x)=\alpha_s(x)+\alpha_t(x)=0$.

If $u^{\pr}$ is a SW-subsegment of $u$, then $u^{\pr}=s^{\pr}\circ t^{\pr}$ where $s^{\pr}$ and $t^{\pr}$ are (possibly empty) SW-subsegments of $s$ and $t$, respectively. Hence, $\alpha_{u^{\pr}}(x)=\alpha_{s^{\pr}}(x)+\alpha_{t^{\pr}}(x)\geq 0$.

Similarly, if $u^{\pr}$ is a NE-subsegment of $u$ then $\alpha_{u^{\pr}}(x)\leq 0$. Consequently, $\Sigma(s)\cap \Sigma(t)\subseteq \Sigma(u)$.

Next we show that $\Sigma(s)\cap\Sigma(u)\subseteq\Sigma(t)$. Let $y\in\Sigma(s)\cap\Sigma(u)$. As before, we have $\alpha_t(y)=\alpha_u(y)-\alpha_s(y)=0$.

If $t^{\pr}$ is a NE-subsegment of $t$, then $t^{\pr}\in K_u$. Hence, $\alpha_{t^{\pr}}(y)\leq 0$.

If $t^{\pr}$ is a SW-subsegment of $t$, then it is either a SW-subsegment of $u$ or it contains the initial vertex of $t$. In the former case, we have $\alpha_{t^{\pr}}(y)\geq 0$ since $y\in\Sigma(u)$. In the latter case,

$$\alpha_{t^{\pr}}(y) = \alpha_s(y)+\alpha_{t^{\pr}}(y) = \alpha_{s\circ t^{\pr}}(y) \geq 0.$$

The last inequality follows from the fact that $s\circ t^{\pr}$ is a SW-subsegment of $u$. Hence, $\Sigma(s)\cap\Sigma(u)\subseteq\Sigma(t)$.

By a similar argument, we have $\Sigma(t)\cap\Sigma(u)\subseteq\Sigma(s)$. We conclude that
$$\Sigma(s)\cap\Sigma(t)=\Sigma(s)\cap\Sigma(u)=\Sigma(t)\cap\Sigma(u).$$\end{proof}

\begin{claim}\label{lem_inter_shards_2}
If $s_1,s_2,t,u$ are segments such that $u=s_1\circ t\circ s_2$ and either $t\in A_u$ or $t\in K_u$, then
$$\Sigma(u)\cap\Sigma(t)\subseteq\Sigma(s_1)\cap\Sigma(s_2).$$
\end{claim}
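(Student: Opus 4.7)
The plan is to use Proposition~\ref{prop_shards}, which characterizes $\Sigma(s)$ by the sign inequalities $\alpha_{s'}(x)\ge 0$ for SW-subsegments $s'$ of $s$ and $\alpha_{s'}(x)\le 0$ for NE-subsegments, combined with the linearity $\alpha_u=\alpha_{s_1}+\alpha_t+\alpha_{s_2}$. Without loss of generality I assume $t\in A_u$; the case $t\in K_u$ is analogous with the roles of South/East swapped and $t\circ s'$ replacing $s'\circ t$. Geometrically, $t\in A_u$ means the step in $u$ from $s_1$'s last vertex to $t$'s first vertex is South, and the step from $t$'s last vertex to $s_2$'s first vertex is East.

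Fix $x\in\Sigma(u)\cap\Sigma(t)$. The first step is to show $\alpha_{s_1}(x)=\alpha_{s_2}(x)=0$. From $\alpha_u(x)=\alpha_t(x)=0$ and the decomposition above we obtain $\alpha_{s_1}(x)+\alpha_{s_2}(x)=0$. The geometric picture shows that both $s_1$ and $s_2$ are NE-subsegments of $u$: $u$ starts with $s_1$ and leaves it to the South, while $u$ enters $s_2$ from the West and ends there. Hence $\alpha_{s_1}(x),\alpha_{s_2}(x)\le 0$, forcing both to vanish.

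To conclude $x\in\Sigma(s_1)$ I then verify the defining inequalities of Proposition~\ref{prop_shards} for each subsegment of $s_1$. A routine case split on whether $s_1$ starts with $s'$ (or enters $s'$ from the West) and whether $s_1$ ends with $s'$ (or leaves $s'$ to the South) shows that every NE-subsegment $s'$ of $s_1$ remains an NE-subsegment of $u$, so $\alpha_{s'}(x)\le 0$. A SW-subsegment $s'$ of $s_1$ is likewise a SW-subsegment of $u$ unless $s_1$ ends with $s'$; in that exceptional case $u$ leaves $s'$ to the South rather than East. The key observation is that $s'\circ t$ is then itself a SW-subsegment of $u$ (entered from the North, left to the East), which gives $\alpha_{s'}(x)+\alpha_t(x)=\alpha_{s'\circ t}(x)\ge 0$, and $\alpha_t(x)=0$ yields $\alpha_{s'}(x)\ge 0$. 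The proof that $x\in\Sigma(s_2)$ is symmetric: NE-subsegments of $s_2$ transfer directly, and a SW-subsegment $s'$ of $s_2$ for which $s_2$ starts with $s'$ is handled by the analogous observation that $t\circ s'$ is then a SW-subsegment of $u$.

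The main obstacle is the bookkeeping of the case split (SW vs.\ NE, and ``generic'' vs.\ ``boundary'' position of $s'$ inside $s_1$ or $s_2$). Once the ``extend by $t$'' trick is identified, each defining inequality of $\Sigma(s_1)$ or $\Sigma(s_2)$ is reduced to a defining inequality of $\Sigma(u)$, modulo the vanishing of $\alpha_t(x)$, and the containment follows.
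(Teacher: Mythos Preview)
Your proof is correct and follows essentially the same approach as the paper: verify the defining inequalities of Proposition~\ref{prop_shards} for $s_1$ and $s_2$ by relating each subsegment either directly to a subsegment of $u$ of the same type, or---in the boundary case---to a subsegment of $u$ after concatenating with $t$ and using $\alpha_t(x)=0$. The only cosmetic differences are that the paper works in the dual case $t\in K_u$ and is terser about the case split (your explicit separation of the ``$s_1$ ends with $s'$'' boundary case is in fact more careful), and your preliminary step showing $\alpha_{s_1}(x)=\alpha_{s_2}(x)=0$ is correct but redundant, since these equalities are recovered anyway once you verify both the SW and NE inequalities for the full segments $s_1,s_2$ themselves.
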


\begin{proof}

Let $s_1,s_2,t,u$ be segments such that $u=s_1\circ t\circ s_2$. We will assume that $t\in K_u$. We prove $\Sigma(u)\cap\Sigma(t)\subseteq\Sigma(s_1)\cap\Sigma(s_2)$. The case where $t$ is in $A_u$ may be proved in a dual manner.

By symmetry, it is enough to verify that $\Sigma(s_1)$ contains $\Sigma(t)\cap \Sigma(u)$. Let $x\in \Sigma(t)\cap \Sigma(u)$ be given. If $s^{\pr}$ is a SW-subsegment of $s_1$, then $s^{\pr}$ is a SW-subsegment of $u$, so $\alpha_{s^{\pr}}(x)\leq 0$. If $s^{\pr}$ is a NE-subsegment of $s_1$, then $s^{\pr}\circ t$ is a NE-subsegment of $u$, so $\alpha_{s^{\pr}}(x)=\alpha_{s^{\pr}\circ t}(x)-\alpha_t(x)\geq 0$. Hence, we have $\Sigma(t)\cap \Sigma(u)\subseteq \Sigma(s_1)$.\end{proof}

\begin{claim}\label{lem_inter_shards_3}
If $s$ and $t$ are friendly along a common subsegment $u$, then $\Sigma(s)\cap\Sigma(t)\subseteq\Sigma(u)$.
\end{claim}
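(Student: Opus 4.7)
The plan is to verify directly that any $x \in \Sigma(s) \cap \Sigma(t)$ meets the three defining conditions of $\Sigma(u)$ from Proposition~\ref{prop_shards}. First I would translate the friendly hypothesis into the $A_\bullet, K_\bullet$ language of Section~\ref{subsec_comb_paths_segments}: the condition that $s$ either starts at $u$ or enters $u$ from the West, and that $s$ either ends at $u$ or leaves $u$ to the South, is precisely $u \in K_s$; dually, the condition on $t$ (enters from the North, leaves to the East) is precisely $u \in A_t$.

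From $u \in K_s$ and $x \in \Sigma(s)$ I would immediately read off $\alpha_u(x) \leq 0$, and from $u \in A_t$ and $x \in \Sigma(t)$ I would read off $\alpha_u(x) \geq 0$, forcing $\alpha_u(x) = 0$. To handle the remaining inequalities, the key auxiliary fact to establish is transitivity of the SW- and NE-subsegment relations: if $a \in A_b$ and $b \in A_c$, then $a \in A_c$, and similarly for $K$. This is a short case analysis depending on whether the ``starts with'' or the ``enters from'' alternative holds at each step of the chain; when $c$ enters $b$ from the North (or West, in the $K$ case), the vertex of $c$ immediately preceding $a$ coincides with the corresponding vertex of $b$ and inherits the same compass relation, and symmetrically on the terminal side.

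Once transitivity is in hand, every $w \in A_u$ satisfies $w \in A_u \subseteq A_t$, so $\alpha_w(x) \geq 0$ by $x \in \Sigma(t)$; dually, every $w \in K_u$ satisfies $w \in K_u \subseteq K_s$, so $\alpha_w(x) \leq 0$ by $x \in \Sigma(s)$. Combined with $\alpha_u(x) = 0$, these are exactly the inequalities characterizing $\Sigma(u)$ in Proposition~\ref{prop_shards}, so $x \in \Sigma(u)$ as required. The only step that demands genuine care is the transitivity of $A_\bullet$ and $K_\bullet$, since one must track the boundary behavior (``starts/ends with'' versus ``enters/leaves'') at both ends of the chain; the rest is a direct substitution into the inequalities defining the shards.
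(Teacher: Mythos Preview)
Your proposal is correct and follows essentially the same argument as the paper. The paper writes ``say $u\in A_s\cap K_t$,'' swapping the roles of $s$ and $t$ relative to the literal definition of friendly (treating friendliness as symmetric), whereas you keep the definition's labeling and get $u\in K_s\cap A_t$; this is immaterial. The paper then uses the transitivity $A_u\subseteq A_s$ and $K_u\subseteq K_t$ implicitly in one line, while you isolate it as the one step requiring care---a fair assessment, and your sketch of the case analysis is adequate.
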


\begin{proof}

Let $s,t$ be segments that are friendly along a common subsegment $u$; say $u\in A_s\cap K_t$. Let $x\in \Sigma(s)\cap \Sigma(t)$ be given. Then $\alpha_u(x)\geq 0$ since $u\in A_s$ and $\alpha_u(x)\leq 0$ since $u\in K_t$, which means $\alpha_u(x)=0$. Let $u^{\pr}$ be a NE-subsegment of $u$. Then $u^{\pr}\in K_t$, so $\alpha_{u^{\pr}}(x)\leq 0$. Similarly, if $u^{\pr}$ is a SW-subsegment of $u$, then $u^{\pr}\in A_s$ and $\alpha_{u^{\pr}}(x)\geq 0$. Hence, $\Sigma(s)\cap \Sigma(t)\subseteq \Sigma(u)$.\end{proof}

Combining Claims~\ref{lem_inter_shards_1}, \ref{lem_inter_shards_2}, and \ref{lem_inter_shards_3}, we deduce the following lemma.

\begin{lemma}\label{lem_wide_subset}
For a subset $X\subseteq\Rbb^r$, the set of shards containing $X$ is a wide set.
\end{lemma}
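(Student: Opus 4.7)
The plan is to set $T_X = \{s\in\Seg(\lambda) : X \subseteq \Sigma(s)\}$ and verify directly that $T_X$ satisfies each of the three defining properties of a wide set from Section~\ref{subsec_transitive}. These three axioms were arranged to mirror exactly Claims~\ref{lem_inter_shards_1}, \ref{lem_inter_shards_2}, and \ref{lem_inter_shards_3}, so the lemma should reduce to a short, axiom-by-axiom bookkeeping check.

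First I would handle the triple axiom. Suppose $s\circ t = u$ and that at least two of $s, t, u$ belong to $T_X$. Then $X$ is contained in one of the pairwise shard intersections $\Sigma(s)\cap\Sigma(t)$, $\Sigma(s)\cap\Sigma(u)$, or $\Sigma(t)\cap\Sigma(u)$. By Claim~\ref{lem_inter_shards_1} these three sets coincide as a single cone, which in particular is contained in each of the three shards individually; hence $X$ lies in the third shard as well, giving $|T_X\cap\{s,t,u\}| \neq 2$.

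Next, for the friendliness axiom, if $s, t \in T_X$ are friendly along a common subsegment $u$, Claim~\ref{lem_inter_shards_3} yields $X \subseteq \Sigma(s)\cap\Sigma(t) \subseteq \Sigma(u)$, so $u \in T_X$. For the decomposition axiom, suppose $s = s_1 \circ s_2 \circ s_3$ with $s, s_2 \in T_X$ and either $s_2 \in A_s$ or $s_2 \in K_s$. Then Claim~\ref{lem_inter_shards_2} gives $X \subseteq \Sigma(s)\cap\Sigma(s_2) \subseteq \Sigma(s_1)\cap\Sigma(s_3)$, so both $s_1$ and $s_3$ lie in $T_X$ (the assertion is vacuous for whichever of $s_1, s_3$ is an empty segment, if any).

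The main ``obstacle'' here is really no obstacle at all: the nontrivial geometric content has already been packaged into Claims~\ref{lem_inter_shards_1}--\ref{lem_inter_shards_3}, which were designed to match the three wide-set axioms in order. Consequently I expect the write-up to consist only of the three one-line verifications above, with no additional subtleties beyond recognizing that the empty-segment corner case in the third axiom is vacuous.
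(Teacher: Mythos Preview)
Your proposal is correct and matches the paper's approach exactly: the paper's entire proof is the single sentence ``Combining Claims~\ref{lem_inter_shards_1}, \ref{lem_inter_shards_2}, and \ref{lem_inter_shards_3}, we deduce the following lemma,'' and you have simply unpacked this by matching each claim to the corresponding wide-set axiom. There is nothing more to add.
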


An element of the shard intersection order is defined as the intersection of some set of shards. In the following lemma, we give an equivalent definition as the union of some faces of the fan $\Fcal_{\lambda}$.

\begin{lemma}\label{lem_shard_intersection_faces}
For a set of segments $T\subseteq\Seg(\lambda)$, the cone $\bigcap_{s\in T}\Sigma(s)\in\Psi^f(\lambda)$ is equal to
$$\bigcup_{F\in\bigcap_{s\in T}\Delta_s}C(F).$$
\end{lemma}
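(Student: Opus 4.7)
The plan is to prove the two inclusions separately, using the definition of $\Sigma(s)$ as a union of cones $C(F)$ together with the simplicial fan structure on $\Fcal_{\lambda}$.

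For the inclusion $\supseteq$, I would observe that if $F\in\bigcap_{s\in T}\Delta_s$, then for each $s\in T$ the cone $C(F)$ is one of the constituent cones in $\Sigma(s)=\bigcup_{F'\in\Delta_s}C(F')$. Hence $C(F)\subseteq\Sigma(s)$ for every $s\in T$, which immediately gives $C(F)\subseteq\bigcap_{s\in T}\Sigma(s)$. Taking the union over all such $F$ yields the desired containment.

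For the reverse inclusion $\subseteq$, the key preliminary observation is that $\Delta_s$ is closed under taking subfaces: the membership condition ``every $p\in F$ turns at an even number of vertices in each maximal subsegment of $s$ contained in $p$'' is inherited by any subset of $F$, so $\Delta_s$ is a subcomplex of $\wtil{\Delta}^{NK}(\lambda)$. Combining this with the standard decomposition of a simplicial cone into the (disjoint) relative interiors of its faces, one obtains
\[
\Sigma(s)\;=\;\bigsqcup_{F\in\Delta_s}\mathrm{relint}\bigl(C(F)\bigr).
\]
Now given $x\in\bigcap_{s\in T}\Sigma(s)$, Theorem~\ref{thm_fan} guarantees a unique face $F$ of the reduced nonkissing complex with $x\in\mathrm{relint}(C(F))$. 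For each $s\in T$, the disjoint-union description of $\Sigma(s)$ forces $F\in\Delta_s$, so $F\in\bigcap_{s\in T}\Delta_s$ and $x\in C(F)$ lies in the right-hand side.

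The only genuine content is the subcomplex property of $\Delta_s$, and this is immediate from the definition; the rest is standard fan bookkeeping, so I do not anticipate any obstacle.
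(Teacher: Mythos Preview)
Your proof is correct and follows essentially the same route as the paper: both directions are handled identically, and for the nontrivial inclusion you and the paper each pick the unique minimal face $F$ with $x\in C(F)$ (equivalently, $x\in\mathrm{relint}(C(F))$) and invoke the subcomplex property of $\Delta_s$ to conclude $F\in\Delta_s$. The only cosmetic difference is that the paper phrases it as ``$F$ is a face of some $F'\in\Delta_s$, hence $F\in\Delta_s$'' rather than via your disjoint relative-interior decomposition.
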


\begin{proof}
Fix a set of segments $T$. Let $Z=\bigcap_{s\in T}\Sigma(s)$, and let
$$Z^{\pr}=\bigcup_{F\in\bigcap_{s\in T}\Delta_s}C(F).$$
Since $\Sigma(s)$ contains $C(F)$ for all $F\in\Delta_s$, it is clear that $Z$ contains $Z^{\pr}$.

Let $x\in Z$, and let $F$ be the smallest face of $\wtil{\Delta}^{NK}(\lambda)$ such that $x\in C(F)$. Then $x\in\Sigma(s)$ for any $s\in T$. Since $\Sigma(s)$ is defined as the union of faces in $\Delta_s$, $F$ must be a face of some $F^{\pr}\in\Delta_s$. Since $\Delta_s$ is a simplicial complex, this means that $F\in\Delta_s$. Hence, $x\in Z^{\pr}$, and the sets $Z$ and $Z^{\pr}$ are equal.
\end{proof}

\begin{lemma}\label{lem_shard_intersection_codim}
If $X$ is a nonfriendly set of segments, then $\bigcap_{s\in X}\Sigma(s)$ is a cone of codimension $|X|$.
\end{lemma}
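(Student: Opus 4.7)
The plan is to establish matching upper and lower bounds on $\dim\bigcap_{s\in X}\Sigma(s)$, writing $r:=|V^o|$. For the upper bound I would use $\Sigma(s)\subseteq H_s$ from Proposition~\ref{prop_shards}, reducing the task to showing that the linear functionals $\{\alpha_s:s\in X\}$ are linearly independent. To obtain this, I would first apply Corollary~\ref{cor_lower_cover} to find a facet $F$ of $\wtil{\Delta}^{NK}(\lambda)$ with $\Des(F)=X$. Since $\Fcal_{\lambda}$ is a complete simplicial fan of dimension $r$ (Theorem~\ref{thm_fan}) and the reduced nonkissing complex is thin, $C(F)$ is a simplicial cone with exactly $r$ walls, so Proposition~\ref{prop_ineq_fan} forces $|\Des(F)\cup\Asc(F)|=r$ and the normals $\{\alpha_s:s\in\Des(F)\cup\Asc(F)\}$ to be linearly independent (the defining normals of a simplicial cone in $\Rbb^r$ are independent). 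The subset $\{\alpha_s:s\in X\}$ is therefore independent, so $\bigcap_{s\in X}H_s$ has dimension $r-|X|$, giving the upper bound on $\dim\bigcap_{s\in X}\Sigma(s)$.

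For the lower bound I would exhibit a face of $C(F)$ of the correct dimension lying inside $\bigcap_{s\in X}\Sigma(s)$. For each $s\in X=\Des(F)$, let $F'_s\lessdot F$ be the unique lower cover with $F'_s\stackrel{s}{\ra}F$, and let $p_s$ be the unique path in $F\setm F'_s$. Because $\alpha_s(g_{p_s})=+1$ while $\alpha_s(g_p)=0$ for all other $p\in F$ (as recorded in the proof of Proposition~\ref{prop_ineq_fan}), the paths $\{p_s\}_{s\in X}$ are pairwise distinct. Setting $G:=F\setm\{p_s:s\in X\}$ gives a face of $\wtil{\Delta}^{NK}(\lambda)$ of size $r-|X|$. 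Proposition~\ref{prop_ridges_fan} yields $C(F\setm\{p_s\})=C(F\cap F'_s)\subseteq\Sigma(s)$ for each $s\in X$, and since $C(G)$ sits inside each such $C(F\setm\{p_s\})$, I conclude $C(G)\subseteq\bigcap_{s\in X}\Sigma(s)$. The vectors $\{g_p:p\in G\}$ are a subset of the (linearly independent) extreme rays of the simplicial cone $C(F)$, so $\dim C(G)=r-|X|$, matching the upper bound.

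I expect the main obstacle to be the linear independence step, which requires simultaneously invoking Corollary~\ref{cor_lower_cover} (to realize $X$ as a descent set), the wall description in Proposition~\ref{prop_ineq_fan}, and the simpliciality/thinness of the Grid-associahedron fan (Theorem~\ref{thm_fan}); each ingredient is available, but packaging them to count walls and deduce independence of $\{\alpha_s:s\in X\}$ requires care. Once this is in hand, the lower-bound construction via the face $G$ together with Proposition~\ref{prop_ridges_fan} is essentially routine.
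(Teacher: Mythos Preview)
Your proposal is correct and follows essentially the same route as the paper. Both arguments invoke Corollary~\ref{cor_lower_cover} to realize $X$ as $\Des(F)$, obtain the upper bound from $\bigcap_{s\in X}\Sigma(s)\subseteq\bigcap_{s\in X}H_s$ together with the linear independence of the wall normals of the simplicial cone $C(F)$, and obtain the lower bound by observing that the intersection of the lower walls of $C(F)$ (your face $C(G)$) has codimension $|X|$ and lies in each $\Sigma(s)$ via Proposition~\ref{prop_ridges_fan}; the paper simply phrases this last step more tersely as ``the intersection of the lower walls of $F$ is a face of codimension $|X|$.''
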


\begin{proof}
Let $X$ be a nonfriendly set, and let $Z$ be the polyhedral cone $\bigcap_{s\in X}\Sigma(s)$. By Corollary~\ref{cor_lower_cover}, there exists a facet $F$ of $\wtil{\Delta}^{NK}$ such that $X=\{s:\ \exists F^{\pr}\stackrel{s}{\ra}F\}$. Since $(\Fcal_{\lambda},\GT(\lambda))$ is a simplicial fan poset, the intersection of the lower walls of $F$ is a face of codimension $|X|$. Since $Z$ contains a cone of codimension $|X|$, we have $\codim Z\leq|X|$. On the other hand, since $F$ is a simplicial cone, we have $\codim\bigcap_{s\in X}H_s=|X|$. Since $Z$ is contained in $\bigcap_{s\in X}H_s$, it follows that $Z$ has codimension $|X|$.
\end{proof}

We are now ready to state the main result of this section.

\begin{theorem}\label{thm_shard_intersection_order}
The geometric shard intersection order $\Psi^f(\lambda)$ is isomorphic to the lattice of wide sets $\Psi^w(\lambda)$.
\end{theorem}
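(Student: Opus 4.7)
The plan is to construct mutually inverse order-preserving maps $\Phi \colon \Psi^w(\lambda) \to \Psi^f(\lambda)$ and $\Psi \colon \Psi^f(\lambda) \to \Psi^w(\lambda)$ given by
\[\Phi(T) = \bigcap_{s \in T} \Sigma(s), \quad \Psi(Z) = \{s \in \Seg(\lambda) : Z \subseteq \Sigma(s)\}.\]
Lemma~\ref{lem_wide_subset} guarantees that $\Psi(Z)$ is wide, and both maps reverse inclusions; since $\Psi^w(\lambda)$ is ordered by inclusion and $\Psi^f(\lambda)$ by reverse inclusion, this makes them order-preserving. The identity $\Phi\circ\Psi=\mathrm{id}$ is immediate: for $Z=\bigcap_{s\in I}\Sigma(s)$, one has $I\subseteq\Psi(Z)$, so $\Phi(\Psi(Z))\subseteq Z$, while the reverse inclusion is built into the definition of $\Psi$.

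The crux is the other composition $\Psi\circ\Phi=\mathrm{id}$. For a wide set $T$ the containment $T\subseteq\Psi(\Phi(T))$ is clear; to obtain equality I would use a codimension count. By Proposition~\ref{prop_nonfriendly_transitive}, $T=\overline{\NF(T)}$, and any $u\in T\setminus\NF(T)$ admits a proper subsegment $v\in T\cap(A_u\cup K_u)$ and a decomposition $u=v_1\circ v\circ v_2$ with $v_1,v_2\in T$ by the third wide set axiom. Claims~\ref{lem_inter_shards_1} and~\ref{lem_inter_shards_2} then show that $\Sigma(u)$ is redundant in the intersection, and induction on length yields $\Phi(T)=\bigcap_{s\in\NF(T)}\Sigma(s)$. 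Lemma~\ref{lem_shard_intersection_codim} then gives $\codim\Phi(T)=|\NF(T)|$. Setting $T'=\Psi(\Phi(T))$, the already-established $\Phi\circ\Psi=\mathrm{id}$ forces $\Phi(T')=\Phi(T)$, and hence $|\NF(T')|=|\NF(T)|$. By Proposition~\ref{prop_shard_int_graded_lattice}, $\Psi^w(\lambda)$ is a graded lattice whose rank function is $T\mapsto|\NF(T)|$; the inclusion $T\subseteq T'$ together with equal ranks forces $T=T'$.

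The main obstacle is the invocation of Proposition~\ref{prop_shard_int_graded_lattice}, whose own proof identifies the grading of $\Psi^w(\lambda)$ with the nonfriendly core cardinality. Without this grading one still has the codimension formula, but concluding injectivity of $\Phi$ (equivalently, that $\Psi$ is a left inverse) requires that $|\NF(\cdot)|$ be strictly monotone along chains of wide sets---exactly what the grading provides. With the grading in place, the above argument shows $\Phi$ and $\Psi$ are mutually inverse order-preserving bijections, yielding the poset isomorphism $\Psi^f(\lambda)\cong\Psi^w(\lambda)$.
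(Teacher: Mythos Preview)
Your overall setup with the maps $\Phi$ and $\Psi$ matches the paper's, and the reduction $\Phi(T)=\bigcap_{s\in\NF(T)}\Sigma(s)$ is correct and also appears there. The gap is the final step: your appeal to Proposition~\ref{prop_shard_int_graded_lattice} is circular. That proposition asserts that $\Psi^f(\lambda)$ (not $\Psi^w(\lambda)$) is graded, and it is graded by codimension; its proof goes through Lemma~\ref{lem_shard_int_trans_face}, which in turn quotes ``the proof of Theorem~\ref{thm_shard_intersection_order}''. To read off that $\Psi^w(\lambda)$ is graded with rank $T\mapsto|\NF(T)|$ you must already know the isomorphism you are proving. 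Without that, the implication ``$T\subseteq T'$ wide and $|\NF(T)|=|\NF(T')|$ $\Rightarrow$ $T=T'$'' is exactly the missing injectivity of $\Phi$, so the argument loops.

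The paper closes this gap by a direct induction rather than a rank comparison. Given a wide $T'$ with $Z'=\Phi(T')$ and a segment $u$ with $Z'\subseteq\Sigma(u)$, one shows $u\in T'$ by induction on the length of $u$. If $u$ is friendly with some $s\in T'$ along a subsegment $t$, then Claim~\ref{lem_inter_shards_3} gives $Z'\subseteq\Sigma(t)$, so $t\in T'$ by induction; writing $u=t_1\circ t\circ t_2$ one checks (using the shard inequalities) that $Z'\subseteq\Sigma(t_i)$, whence $t_1,t_2\in T'$ and $u\in T'$ by closedness. If instead $u$ is nonfriendly with every element of $T'$, then $\NF(T')\cup\{u\}$ is nonfriendly and Lemma~\ref{lem_shard_intersection_codim} forces $\codim(Z'\cap\Sigma(u))>\codim Z'$, contradicting $Z'\subseteq\Sigma(u)$. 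This argument uses only Lemma~\ref{lem_shard_intersection_codim} and the three Claims, all of which are established beforehand, so there is no circularity.
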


\begin{proof}

Let $Z\in\Psi^f(\lambda)$ be given, and let $T=\{s\in\Seg(\lambda):\ Z\subseteq \Sigma(s)\}$. By Lemma~\ref{lem_wide_subset}, $T$ is a wide set. Clearly $Z\subseteq\bigcap_{s\in T}\Sigma(s)$ holds. Since $Z$ is the intersection of some shards and $T$ contains every segment $s$ such that $Z\subseteq\Sigma(s)$, it follows that $Z=\bigcap_{s\in T}\Sigma(s)$.

Now let $T^{\pr}$ be a given wide set, and put $Z^{\pr}=\bigcap_{s\in T^{\pr}}\Sigma(s)$. We prove that $T^{\pr}=\{s\in\Seg(\lambda):\ Z^{\pr}\subseteq\Sigma(s)\}$. This will complete the proof that there is a bijection between $\Psi^f(\lambda)$ and $\Psi^w(\lambda)$. It is clear that this bijection is order-preserving in both directions, so it defines an isomorphism between these two posets.

Let $u\in\Seg(\lambda)$ such that $Z^{\pr}\subseteq \Sigma(u)$. We show that $u\in T^{\pr}$ by induction on the length of $u$. Suppose $Z^{\pr}\subseteq\Sigma(t)$ implies $t\in T^{\pr}$ for any proper subsegment $t$ of $u$.

Fix some $x\in Z^{\pr}$. Suppose $u$ and $s$ are friendly at a common segment $t$ for some $s\in T^{\pr}$. We will assume that $t$ is a SW-subsegment of $u$. The proof for $t\in K_u$ is similar. By Claim~\ref{lem_inter_shards_3}, we have $Z^{\pr}\subseteq \Sigma(u)\cap \Sigma(s)\subseteq \Sigma(t)$. By the assumption on $u$, the segment $t$ is in $T^{\pr}$. There exist (possibly empty) NE-subsegments $t_1,t_2$ of $u$ such that $u=t_1\circ t\circ t_2$. If $u^{\pr}$ is in $K_{t_1}$ then $u^{\pr}$ is in $K_u$. Hence, $\alpha_{u^{\pr}}(x)\leq 0$. On the other hand, if $u^{\pr}\in A_{t_1}$, then $u^{\pr}\circ t$ is in $A_u$. In this case, $\alpha_{u^{\pr}}(x)=\alpha_{u^{\pr}\circ t}(x)-\alpha_t(x)\geq 0$. Hence, $Z^{\pr}\subseteq \Sigma(t_1)$ and $t_1\in T^{\pr}$ by induction. Similarly, $t_2\in T^{\pr}$. Since $T^{\pr}$ is closed, this implies $u\in T^{\pr}$.

Now assume that $u$ and $s$ are nonfriendly for all $s\in T^{\pr}$. As any segment is friendly with itself, we assume in particular that $u\notin T^{\pr}$. By Proposition~\ref{prop_nonfriendly_transitive}, $T^{\pr}$ is the closure of the nonfriendly set $\NF(T^{\pr})$, where
$$\NF(T^{\pr})=\{s\in T^{\pr}:\ \nexists t\in T^{\pr}\setm\{s\}\ \ t\in A_s \mbox{ or } t\in K_s\}.$$
By Claim~\ref{lem_inter_shards_1}, $\Sigma(s)\cap \Sigma(t)\subseteq \Sigma(u)$ whenever $u=s\circ t$. It follows that $Z^{\pr}=\bigcap_{s\in\NF(T^{\pr})}\Sigma(s)$. Since $\NF(T^{\pr})\cup\{u\}$ is a nonfriendly set, Lemma~\ref{lem_shard_intersection_codim} implies that
$$\codim Z^{\pr}<\codim Z^{\pr}\cap\Sigma(u).$$
Hence, $\Sigma(u)$ does not contain $Z^{\pr}$, contrary to our assumption.\end{proof}

\begin{lemma}\label{lem_shard_int_trans_face}
For a wide set $T$, if $C$ is a maximal face of $\Fcal_{\lambda}$ contained in $\bigcap_{t\in T}\Sigma(t)$, then $C$ is not contained in $\Sigma(s)$ for $s\notin T$.
\end{lemma}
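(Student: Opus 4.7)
The plan is to argue by contradiction: assume that $C\subseteq\Sigma(s)$ for some $s\notin T$ and produce a contradiction. First I would form the set $T^+:=\{u\in\Seg(\lambda):\ C\subseteq\Sigma(u)\}$, which is a wide set by Lemma~\ref{lem_wide_subset}. Clearly $T\subseteq T^+$ (since $C\subseteq Z\subseteq\Sigma(t)$ for every $t\in T$), and $s\in T^+\setm T$, so $T\subsetneq T^+$. Setting $Z^+:=\bigcap_{u\in T^+}\Sigma(u)$, we have $C\subseteq Z^+\subseteq Z$; the bijection of Theorem~\ref{thm_shard_intersection_order} combined with $T\neq T^+$ forces $Z^+\neq Z$, so $Z^+\subsetneq Z$.

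Next I would compare $\dim Z$ with $\dim Z^+$. Claims~\ref{lem_inter_shards_1}, \ref{lem_inter_shards_2}, and \ref{lem_inter_shards_3} together imply that the intersection of shards indexed by a wide set coincides with the intersection indexed by its underlying nonfriendly set, so Lemma~\ref{lem_shard_intersection_codim} yields $\codim Z=|\NF(T)|$ and $\codim Z^+=|\NF(T^+)|$. Transporting the grading of $\Psi^{w}(\lambda)$ (Proposition~\ref{prop_shard_int_graded_lattice}) across the isomorphism with $\Psi^{f}(\lambda)$ from Theorem~\ref{thm_shard_intersection_order}, one identifies the rank on $\Psi^w(\lambda)$ with $|\NF(-)|$, so the strict inclusion $T\subsetneq T^+$ gives $|\NF(T)|<|\NF(T^+)|$, and therefore $\dim Z^+<\dim Z$.

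The remaining step, and the main obstacle, is to show $\dim C=\dim Z$; this would contradict $C\subseteq Z^+$ and finish the proof. By Lemma~\ref{lem_shard_intersection_faces}, $Z$ is a convex polyhedral cone realized as the union of the simplicial fan cones $C(F)$ for $F\in\bigcap_{t\in T}\Delta_t$. The idea is that, for a point $y$ in the relative interior of $C$, an $\Fcal_\lambda$-neighborhood of $y$ is described by the open star of $C$, and the convexity of $Z$ supplies directions transverse to $\mathrm{aff}(C)$ along which $Z$ extends; this would produce a fan cone $C(F')\supsetneq C$ still contained in $Z$, contradicting the maximality of $C$ unless $\dim C=\dim Z$. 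The delicate issue is that $C$ could a priori lie on a proper boundary face of $Z$, where the transverse directions may escape $Z$; ruling this out would rely on the star-interval description of Lemma~\ref{lem_fan_poset_star} together with the fan poset structure $(\Fcal_\lambda,\GT(\lambda))$ of Theorem~\ref{thm_fan_order}, which controls how adjacent maximal cones of $\Fcal_\lambda$ fit together and guarantees that the extending fan cone produced by convexity respects all defining inequalities of $Z$. Once this purity-type statement is established, combining $\dim C=\dim Z$ with $C\subseteq Z^+$ and $\dim Z^+<\dim Z$ yields the desired contradiction.
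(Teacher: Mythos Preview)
Your proposal contains a circularity: you invoke Proposition~\ref{prop_shard_int_graded_lattice} to conclude that $T\subsetneq T^+$ forces $|\NF(T)|<|\NF(T^+)|$, but the proof of that proposition relies on the very lemma you are trying to establish (it is Lemma~\ref{lem_shard_int_trans_face} that is used, just before Proposition~\ref{prop_shard_int_graded_lattice}, to show that strict containment in $\Psi^f(\lambda)$ forces a strict codimension drop). Without an independent reason that strict inclusion of wide sets strictly increases $|\NF(-)|$, your inequality $\dim Z^+<\dim Z$ is unjustified.

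The paper avoids this by never forming $T^+$. Instead it chooses $s\notin T$ of \emph{minimum length} with $C\subseteq\Sigma(s)$. If $s$ is friendly with some $t\in T$ along $u$, then Claims~\ref{lem_inter_shards_2} and~\ref{lem_inter_shards_3} force $C\subseteq\Sigma(u)\cap\Sigma(s_1)\cap\Sigma(s_2)$ where $s=s_1\circ u\circ s_2$; minimality of $s$ puts $u,s_1,s_2\in T$, and closedness of $T$ gives $s\in T$, a contradiction. Thus $s$ must be nonfriendly with every element of $T$, so $\NF(T)\cup\{s\}$ is itself nonfriendly, and a single application of Lemma~\ref{lem_shard_intersection_codim} yields $\codim(Z\cap\Sigma(s))=|\NF(T)|+1>\codim C$. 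No grading on wide sets is needed.

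On the other hand, your convexity sketch for $\dim C=\dim Z$ is correct and simpler than you fear. Since $Z$ is convex (an intersection of convex cones $\Sigma(t)$) and by Lemma~\ref{lem_shard_intersection_faces} is a finite union of fan faces, take a point $x$ in the relative interior of $C$ and any $y\in Z$ with $y\notin\spans(C)$; the segment $[x,y]$ lies in $Z$, and points on it near $x$ lie in some fan face $C(F')\subseteq Z$ whose closure contains $x$, hence $C\subseteq C(F')$ properly, contradicting maximality. The fan-poset machinery of Theorem~\ref{thm_fan_order} and Lemma~\ref{lem_fan_poset_star} is not needed. The paper simply asserts $\codim C=|\NF(T)|$ at the outset, so your explicit justification is a useful addition; just replace the rank-comparison step by the paper's minimum-length reduction to a single nonfriendly segment.
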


\begin{proof}
Let $T$ be a wide set, and set $Z=\bigcap_{t\in T}\Sigma(t)$. From the proof of Theorem~\ref{thm_shard_intersection_order}, we have $Z=\bigcap_{t\in\NF(T)}\Sigma(t)$. Let $C$ be a maximal face of $\Fcal_{\lambda}$ contained in $Z$. Then $\codim C=|\NF(T)|$.

We claim that $C$ is not contained in $\Sigma(s)$ for any segment $s$ not in $T$. Suppose this is not the case, and let $s$ be of minimum length such that $s$ is not in $T$ and $C$ is contained in $\Sigma(s)$.

If $s$ is not friendly with any segment in $T$, then $\codim Z\cap\Sigma(s)>|\NF(T)|$, contradicting the assumption that $C$ is contained in $\Sigma(s)$. Hence, we may assume that $s$ is friendly with some segment $t\in T$.

Suppose $s$ and $t$ are friendly along a common subsegment $u$. Without loss of generality, we may assume that $u\in A_s\cap K_t$. By Claim~\ref{lem_inter_shards_3}, $\Sigma(s)\cap\Sigma(t)$ is contained in $\Sigma(u)$, so $C\subseteq\Sigma(u)$. Let $s_1,s_2$ be (possibly empty) segments such that $s=s_1\circ u\circ s_2$. By Claim~\ref{lem_inter_shards_2}, we have $\Sigma(s)\cap\Sigma(u)\subseteq\Sigma(s_1)\cap\Sigma(s_2)$, which means $C\subseteq\Sigma(s_1)$ and $C\subseteq\Sigma(s_2)$. By the minimality assumption on $s$, the segments $u,s_1,s_2$ are all in $T$. Since $T$ is a closed set, it must contain $s$, a contradiction.\end{proof}

By Lemma~\ref{lem_shard_int_trans_face}, every face $C\in\Fcal_{\lambda}$ is a maximal face of a unique $Z$ in $\Psi^f(\lambda)$. Consequently, if $Z,Z^{\pr}\in\Psi^f(\lambda)$ such that $Z^{\pr}<Z$, then $\codim Z^{\pr}<\codim Z$. This is a key result to proving the following statement.

\begin{proposition}\label{prop_shard_int_graded_lattice}
The poset $\Psi^f(\lambda)$ is a graded lattice.
\end{proposition}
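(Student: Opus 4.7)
The plan is to establish gradedness with rank function $\rk(Z)=\codim Z$; the lattice property is essentially free. Since $\Psi^f(\lambda)$ is noted at the start of Section~\ref{sec_shard} to be a finite join-semilattice with bottom element $\hat{0}=\Rbb^{V^o}$, it is automatically a lattice with $Z_1\wedge Z_2=\bigvee\{W\in\Psi^f(\lambda):W\leq Z_1,\ W\leq Z_2\}$. The paragraph immediately after Lemma~\ref{lem_shard_int_trans_face} already shows that $\rk$ is strictly order-preserving, so it suffices to prove that every cover $Z_1\lessdot Z_2$ in $\Psi^f(\lambda)$ satisfies $\codim Z_2=\codim Z_1+1$.

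I would argue by contradiction, producing an intermediate element via the simplicial structure of $\Fcal_\lambda$. Suppose $Z_1\lessdot Z_2$ (so $Z_1\supsetneq Z_2$ as cones) with $\codim Z_2\geq\codim Z_1+2$. Pick a maximal cone $C\in\Fcal_\lambda$ contained in $Z_2$, so that $\dim C=\dim Z_2$; by Lemma~\ref{lem_shard_int_trans_face}, $\{s\in\Seg(\lambda):C\subseteq\Sigma(s)\}$ equals the wide set of $Z_2$, whence $\bigcap_{s:C\subseteq\Sigma(s)}\Sigma(s)=Z_2$. Because $C\subseteq Z_2\subseteq Z_1$, Lemma~\ref{lem_shard_intersection_faces} applied to $Z_1$ supplies a maximal cone $G\in\Fcal_\lambda$ with $C\subseteq G\subseteq Z_1$ and $\dim G=\dim Z_1$ (any point in the relative interior of $C$ belongs to some such $G$, and cones of $\Fcal_\lambda$ containing that point must contain $C$). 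Since $\dim C<\dim Z_1$, the cone $C$ is a proper face of $G$, and the simpliciality of $G$ (Theorem~\ref{thm_fan}) provides a face $F$ of $G$ with $C$ as a facet of $F$, so $\dim F=\dim C+1$.

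Set $Z:=\bigcap_{s:F\subseteq\Sigma(s)}\Sigma(s)\in\Psi^f(\lambda)$. By Lemma~\ref{lem_shard_int_trans_face}, $F$ is a maximal face of $Z$, so $\dim Z=\dim F=\dim C+1=\dim Z_2+1$, i.e.\ $\codim Z=\codim Z_2-1$. The chain $C\subseteq F\subseteq G\subseteq Z_1$ forces the inclusions $\{s:Z_1\subseteq\Sigma(s)\}\subseteq\{s:F\subseteq\Sigma(s)\}\subseteq\{s:C\subseteq\Sigma(s)\}$, which yield $Z_2\subseteq Z\subseteq Z_1$ after taking intersections of shards. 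The codimension inequalities $\codim Z_1<\codim Z<\codim Z_2$ force both inclusions to be strict, contradicting the cover relation $Z_1\lessdot Z_2$.

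The main obstacle is the geometric input in the middle paragraph: showing that one can climb one unit of dimension from $C$ while remaining inside $Z_1$. This rests on the simpliciality of $\Fcal_\lambda$ (Theorem~\ref{thm_fan}) together with the fact (Lemmas~\ref{lem_shard_intersection_faces} and~\ref{lem_shard_intersection_codim}) that elements of $\Psi^f(\lambda)$ are unions of cones of $\Fcal_\lambda$ of uniform codimension. Once that is in place, the construction of $Z$ and the verification that it refines the purported cover amount to bookkeeping with intersections of shards.
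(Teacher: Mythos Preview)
Your argument is correct and takes essentially the same route as the paper's: both pick a maximal cone $C$ of the higher element, enlarge it to a maximal cone $C'$ (your $G$) of the lower element, step up one dimension inside $C'$ using simpliciality of $\Fcal_\lambda$, and then use the paragraph after Lemma~\ref{lem_shard_int_trans_face} to produce an intermediate shard intersection. The only difference is cosmetic---the paper packages this as inductively constructing a saturated chain of length $\codim Z-\codim Z'$, while you phrase it as a contradiction for a single cover relation.
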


\begin{proof}
As we remarked at the beginning of this section, the intersection $Z\cap Z^{\pr}$ is the join of $Z$ and $Z^{\pr}$ in $\Psi^f(\lambda)$. Since $\Psi^f(\lambda)$ is a finite poset with a minimum element, it follows that it is a lattice.

We show that $\Psi^f(\lambda)$ is graded by codimension. Let $Z,Z^{\pr}\in\Psi^f(\lambda)$ such that $Z^{\pr}\leq Z$. Let $C$ be a face of $\Fcal_{\lambda}$ such that $C$ is a maximal face of $Z$. Since $C\subseteq Z^{\pr}$ and $Z^{\pr}$ is equal to the union of its maximal faces, there exists a face $C^{\pr}\in\Fcal_{\lambda}$ maximal in $Z^{\pr}$ such that $C\subseteq C^{\pr}$. If $C\neq C^{\pr}$, then there exists a face $C_1$ such that $C\subsetneq C_1\subseteq C^{\pr}$ and $\codim C_1=\codim C-1$. Let $Z_1\in\Psi^f(\lambda)$ such that $C_1$ is a maximal face in $Z_1$. Since every shard containing $Z^{\pr}$ also contains $C_1$, we have $Z^{\pr}\leq Z_1$. Conversely, $Z_1<Z$ holds. By induction, there exists a chain from $Z^{\pr}$ to $Z$ with $\codim Z-\codim Z^{\pr}+1$ elements. By the discussion preceeding this proof, this chain is maximal.
\end{proof}

\section{Enumeration}\label{sec_enumeration}

\subsection{$f$-vector and $h$-vector}\label{subsec_fhpoly}

Given a simplicial complex $\Delta$, let $f_d$ be the number of $d$-dimensional faces of $\Delta$ for each $d\in\{-1,0,1,\ldots\}$. Every nonvoid simplicial complex contains the \emph{empty face}, which is the unique face of dimension $-1$. The \emph{$f$-vector} $(f_{-1},f_0,f_1,\ldots)$ is the sequence of face numbers of $\Delta$. The \emph{$f$-polynomial} is
$$f(t)=\sum_df_{d-1}t^d.$$

If the largest face is of dimension $r-1$, then the \emph{$h$-vector} $(h_0,h_1,h_2,\ldots,h_r)$ is the sequence of integers defined by the identity
$$\sum_{d=0}^r f_{d-1}(t-1)^{r-d}=\sum_{d=0}^r h_dt^{r-d}.$$
The polynomial $h(t)=\sum_{d=0}^r h_dt^{r-d}$ is the \emph{$h$-polynomial} of $\Delta$. The above identity may be compactly expressed as $h(t+1)=t^rf(1/t)$. For example, if $\Delta$ is the reduced nonkissing complex for a $2\times 3$ rectangle, then

$$f(t)=1+5t+5t^2,\ \hspace{5mm} h(t)=(t-1)^2 + 5(t-1) + 5=1+3t+t^2.$$

A pure simplicial complex $\Delta$ is \emph{shellable} if its facets may be totally ordered as $F_1, F_2,\ldots$ such that for all $1\leq i<j$, there exists $k<j$ such that $F_k\cap F_j$ is a ridge that contains $F_i\cap F_j$. If $\Delta$ is shellable, then for each index $j$, there exists a unique minimal face $R(F_j)$ of $F_j$ not contained in $\bigcup_{i<j}F_i$. In this situation, the $h$-polynomial is equal to $\sum_j x^{|R(F_j)|}$.

Reading proved that if $(\Fcal,P)$ is a simplicial fan poset, then any linear extension of $P$ is a shelling order on the facets of $\Fcal$ \cite[Proposition 3.5]{reading:lattice_congruence}. Furthermore, for each facet $F\in\Fcal$, $|R(F)|$ is equal to the number of lower covers $F^{\pr}\lessdot F$. Hence, Theorem~\ref{thm_fan_order} implies the first assertion of the following result. The second assertion is deduced from Proposition~\ref{prop_ineq_fan}.

\begin{lemma}\label{lem_shellable}
Any linear extension of $\GT(\lambda)$ is a shelling order on the facets of the reduced nonkissing complex. Moreover, the $h$-polynomial of the reduced nonkissing complex equals the $f$-polynomial of the nonfriendly complex.
\end{lemma}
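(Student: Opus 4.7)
The plan is to assemble both assertions from machinery developed earlier in the paper; neither requires any new constructions. For the first assertion, I would invoke the shellability criterion for simplicial fan posets cited in the excerpt: any linear extension of the poset $P$ of a simplicial fan poset $(\Fcal,P)$ is a shelling order on the facets of $\Fcal$. By Theorem~\ref{thm_fan}, the Grid-associahedron fan $\Fcal_{\lambda}$ is a complete simplicial fan whose maximal cones correspond to facets of $\wtil{\Delta}^{NK}(\lambda)$, and by Theorem~\ref{thm_fan_order}, the pair $(\Fcal_{\lambda},\GT(\lambda))$ is a fan poset. These hypotheses are exactly what Reading's criterion requires, so the first assertion follows immediately.

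For the second assertion, I would combine the resulting shelling with the description of the cones $C(F)$ from Proposition~\ref{prop_ineq_fan}. Reading's result also tells us that, for the shelling produced by a linear extension of a simplicial fan poset, the restriction $R(F)$ of a facet $F$ has cardinality equal to the number of lower covers of $F$ in the underlying poset. By the edge-labeling of $\GT(\lambda)$, the lower covers of $F$ are in bijection with the segments in $\Des(F)$, so the $h$-polynomial of the reduced nonkissing complex becomes
$$h(t)=\sum_{F}t^{|R(F)|}=\sum_{F}t^{|\Des(F)|},$$
with the sum ranging over facets $F$ of $\wtil{\Delta}^{NK}(\lambda)$.

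Finally I would reindex this sum as a sum over faces of the nonfriendly complex. Corollary~\ref{cor_lower_cover} tells us that descent sets of facets are exactly the faces of $\Gamma^{NF}(\lambda)$; the remaining point is that distinct facets have distinct descent sets. This is where I expect the only small care is needed: since $\GT(\lambda)$ is congruence-uniform and hence semidistributive, each element is determined by its canonical join-representation, and Theorem~\ref{thm_NF_CJ} identifies the canonical joinands of $F$ with $\{f(s):s\in\Des(F)\}$. Hence $F\mapsto\Des(F)$ is a bijection between facets of $\wtil{\Delta}^{NK}(\lambda)$ and faces of $\Gamma^{NF}(\lambda)$, and the reindexed sum equals the $f$-polynomial of the nonfriendly complex. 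There is no substantive obstacle beyond this bookkeeping; the whole argument is a direct consequence of Theorem~\ref{thm_fan_order}, Proposition~\ref{prop_ineq_fan}, Theorem~\ref{thm_NF_CJ}, and Corollary~\ref{cor_lower_cover}.
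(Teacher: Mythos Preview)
Your proposal is correct and follows essentially the same approach as the paper: both assertions are assembled from Reading's shellability criterion for simplicial fan posets together with Theorem~\ref{thm_fan_order}, and the $h$-polynomial identity comes from identifying $|R(F)|$ with the number of lower covers and then reindexing over faces of $\Gamma^{NF}(\lambda)$. The only cosmetic difference is that the paper cites the bijection between facets and nonfriendly faces directly from the ``Consequently'' clause of Proposition~\ref{prop_ineq_fan}, whereas you re-derive that bijection via semidistributivity, Theorem~\ref{thm_NF_CJ}, and Corollary~\ref{cor_lower_cover}; both routes give the same content.
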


\subsection{$F$-triangle and $H$-triangle}\label{subsec_triangle_FH}

We recall the $F$-triangle and $H$-triangle from Section~\ref{sec_nonkissing}. Fix a shape $\lambda$, and let $V^o$ be the set of interior vertices. For a boundary path $p$, recall that the $g$-vector $g_p\in\Rbb^{V^o}$ (Section~\ref{subsec_fan_nonkissing}) is defined as
$$g_p(v)=\begin{cases}1\ &\mbox{if }p\mbox{ enters }v\mbox{ from the North and leaves to the East,}\\-1\ &\mbox{if }p\mbox{ enters }v\mbox{ from the West and leaves to the South,}\\0\ &\mbox{otherwise.}\end{cases}$$
We say a path $p$ is \emph{non-initial} if $g_p(v)=1$ for some interior vertex $v$. Otherwise, we say $p$ is \emph{initial}. There is a distinguished facet $F_0$ in $\wtil{\Delta}^{NK}$ that consists of all of the initial boundary paths. The \emph{positive part} of the reduced nonkissing complex $\wtil{\Delta}^{NK}_+$ is the full subcomplex of $\wtil{\Delta}^{NK}$ on the non-initial paths.

Let $r=|V^o|$, and label the interior vertices $v_1,\ldots,v_r$. For $i\in[r]$, let $t_i$ be the lazy segment supported at $v_i$, and let $q_i$ be the initial boundary path that turns at $v_i$. The $F$-triangle introduced in Section~\ref{subsec_comb_nonkissing} is the polynomial

$$F(x,y)=\sum_{F\in\wtil{\Delta}^{NK}}x^{|F\setm F_0|}y^{|F\cap F_0|}=\sum_{i,j}f_{ij}x^{i-j}y^j.$$

We remark that $f_{ij}=0$ unless $0\leq j\leq i\leq r$ where $r=|V^o|$. We consider a multivariate extension of this polynomial

$$F(x,y_1,\ldots,y_r) = \sum_{F\in\wtil{\Delta}^{NK}}x^{|F\setm F_0|}\prod_{q_i\in F}y_i.$$

It is clear that this polynomial extends the $F$-triangle in the sense that $F(x,y)=F(x,y,\ldots,y)$.

If $\Gamma=\Gamma(\lambda)$ is the nonfriendly complex, the $H$-triangle introduced in Section~\ref{subsec_comb_nonfriendly} is the polynomial

$$H(x,y)=\sum_{F\in\Gamma}x^{|F|}y^{|\epsilon(F)|}=\sum_{i,j}h_{ij}x^iy^j,$$

\noindent where $\epsilon(F)$ is the set of isolated lazy segments in $F$. Once again, $h_{ij}=0$ unless $0\leq j\leq i\leq r$ holds. We observe that the column sums agree with the $f$-vector and $h$-vector. That is, $f_j=\sum_{i=0}^jf_{ij}$ and $h_j=\sum_{i=0}^jh_{ij}$ for all $j$.

As before, there is a multivariate extension of the $H$-triangle to

$$H(x,y_1,\ldots,y_r)=\sum_{F\in\Gamma}x^{|F|}\prod_{t_i\in\epsilon(F)}y_i.$$

As mentioned in Section~\ref{subsec_comb_paths_segments}, we may declare that some of the vertices of degree $4$ are boundary vertices. Given a subset $B$ of $V^o$, we consider a shape $\lambda^{\pr}$ which is the same graph as $\lambda$ except that the vertices in $B$ are viewed as boundary vertices. This means that boundary paths for $\lambda^{\pr}$ may begin or end at a vertex in $B$, and segments may not include any vertex in $B$. All of the results about the nonkissing complex and the Grid-Tamari order still hold for $\lambda^{\pr}$.

\begin{theorem}\label{thm_FH}
The following identity holds.
\begin{align}
\label{thm_FH_eqn} H(x+1,\ y_1+1,\ \ldots,y_r+1) = x^r F\left(\frac{1}{x},\ \frac{1+y_1(x+1)}{x},\ \ldots,\ \frac{1+y_r(x+1)}{x}\right)
\end{align}
\end{theorem}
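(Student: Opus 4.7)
The plan is to expand both sides of \eqref{thm_FH_eqn} as polynomials in $y_1,\ldots,y_r$ and equate coefficients of each monomial $\prod_{i\in T} y_i$ for $T\subseteq[r]$. On the left, $(x+1)^{|X|}=\sum_{X''\subseteq X}x^{|X''|}$ and $\prod_{t_i\in\epsilon(X)}(y_i+1)=\sum_{T\subseteq\epsilon(X)}\prod_{i\in T}y_i$; on the right, $\prod_{q_i\in F}(1+y_i(x+1))/x$ yields $\sum_{T\subseteq I(F)}(x+1)^{|T|}x^{-|T|}\prod_{i\in T}y_i$, where $I(F)=\{i:q_i\in F\}$. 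Extracting the coefficient of $\prod_{i\in T}y_i$ reduces Theorem~\ref{thm_FH} to verifying, for every $T\subseteq[r]$, the identity
$$\sum_{X\in\Gamma(\lambda):\,T\subseteq\epsilon(X)}(x+1)^{|X|-|T|}\;=\;\sum_{F\in\wtil{\Delta}^{NK}(\lambda):\,T\subseteq I(F)}x^{r-|F|}.$$

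Let $\lambda_T$ be the shape obtained from $\lambda$ by declaring each $v_i$ with $i\in T$ to be a boundary vertex, in the sense introduced at the end of Section~\ref{subsec_comb_paths_segments}. The next step is to establish two bijections that reduce the identity above to the single-variable $f{=}h$ statement for $\lambda_T$. First, on the $H$-side, the condition $T\subseteq\epsilon(X)$ says exactly that $\{t_i:i\in T\}\subseteq X$ and no other segment of $X$ meets any $v_i$ with $i\in T$; removing those isolated lazy segments gives a bijection $X\mapsto X\setminus\{t_i:i\in T\}$ between $\{X\in\Gamma(\lambda):T\subseteq\epsilon(X)\}$ and $\Gamma(\lambda_T)$ that decreases cardinality by exactly $|T|$. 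The key structural step is the analogue on the $F$-side: the link of $\{q_i:i\in T\}$ in $\wtil{\Delta}^{NK}(\lambda)$ is isomorphic to $\wtil{\Delta}^{NK}(\lambda_T)$. I would establish this through the fan realization of Theorem~\ref{thm_fan}: since $g_{q_i}=-e_{v_i}$, the face $C(\{q_i:i\in T\})$ is the nonpositive cone on $\{e_{v_i}:i\in T\}$, and its fan-theoretic link sits in $\Rbb^{V^o\setminus V^o(T)}$. One then checks that for each path $p$ in the link of $\{q_i:i\in T\}$ the restriction $g_p|_{V^o\setminus V^o(T)}$ is the $g$-vector of a unique non-horizontal, non-vertical boundary path $p'$ of $\lambda_T$, that $p\mapsto p'$ preserves pairwise nonkissing, and that the projected fan coincides with $\Fcal_{\lambda_T}$, so that $F\mapsto\{p':p\in F\setminus\{q_i:i\in T\}\}$ is a bijection with $|F|-|T|=|F'|$.

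Once both bijections are in hand, the left side of the reduced identity becomes $\sum_{X'\in\Gamma(\lambda_T)}(x+1)^{|X'|}=f^{\Gamma(\lambda_T)}(x+1)$, while the right side becomes $\sum_{F'\in\wtil{\Delta}^{NK}(\lambda_T)}x^{(r-|T|)-|F'|}=x^{r-|T|}f^{\wtil{\Delta}^{NK}(\lambda_T)}(1/x)$. Lemma~\ref{lem_shellable} applied to $\lambda_T$, whose reduced nonkissing complex has dimension $r-|T|-1$, identifies $f^{\Gamma(\lambda_T)}$ with the $h$-polynomial of $\wtil{\Delta}^{NK}(\lambda_T)$, and the classical transformation $h(t+1)=t^{r'}f(1/t)$ with $r'=r-|T|$ closes the argument. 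The main obstacle is Step~3: the constraint that $p$ be nonkissing with $q_i$ is substantially stronger than the local condition of not being $(N,E)$ at $v_i$, since $q_i$ is an L-shaped path running along an entire row and column, and a kiss may occur along any subsegment of $q_i$ containing $v_i$. This forces a rigidity on $p$ along one of the arms of $q_i$ which is precisely what makes the "splitting at $v_i$" of $p$ produce exactly one non-trivial piece in $\lambda_T$; verifying this rigidity uniformly when several $q_i$'s interact, and confirming that nonkissing pairs in $\lambda$ match nonkissing pairs in $\lambda_T$ under $g$-vector projection, is the principal technical burden.
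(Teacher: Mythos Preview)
Your reduction is exactly the paper's: expand both sides in the $y_i$, match the coefficient of $\prod_{i\in T}y_i$, and translate the $H$-side into the nonfriendly complex $\Gamma(\lambda_T)$ of the shape with the vertices $v_i,\ i\in T,$ declared boundary. On the $F$-side the paper likewise lands on the $f$-polynomial of $\lk_T(\Delta)$, and the remaining step in both arguments is the identity $f^{\Gamma(\lambda_T)}=h^{\lk_T(\Delta)}$.

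Where you diverge is in how you propose to close that step. You aim for the full simplicial isomorphism $\lk_T(\wtil\Delta^{NK}(\lambda))\cong\wtil\Delta^{NK}(\lambda_T)$ via the fan, and you are right that the path-by-path correspondence is delicate: a path $p$ nonkissing with $q_i$ can pass through $v_i$, and in $\lambda_T$ such a $p$ splits into two boundary paths, so the bijection is not on vertices of the link but only on facets. The paper avoids this entirely. What ``follows from Lemma~\ref{lem_shellable}'' is not Lemma~\ref{lem_shellable} applied to $\lambda_T$ after an isomorphism, but rather the \emph{proof method} of Lemma~\ref{lem_shellable} applied directly to the link: since $g_{q_i}=-e_{v_i}$, every downward flip from a facet $F\supseteq\{q_i:i\in T\}$ removes a path with an (N,E) turn, hence never removes any $q_i$; thus $\stars(\{q_i:i\in T\})=[F_0,F_1^T]$ is a principal order ideal of $\GT(\lambda)$, any linear extension restricts to a shelling of $\lk_T(\Delta)$, and the shelling contribution of $F$ is $|\Des(F)|$. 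Because each lower wall of $C(F)$ through the cone $\langle -e_{v_i}:i\in T\rangle$ lies in a hyperplane $H_s$ with $\alpha_s(e_{v_i})=0$, every $s\in\Des(F)$ avoids all $v_i$, so $\Des(F)\in\Gamma(\lambda_T)$; conversely, for $X\in\Gamma(\lambda_T)$ the canonical-join facet $\bigvee_{s\in X}\eta(A_s)$ has $\phi$-image contained in segments avoiding each $v_i$, hence lies in the star. This gives the bijection $F\mapsto\Des(F)$ between facets of $\lk_T(\Delta)$ and faces of $\Gamma(\lambda_T)$ and yields $h^{\lk_T(\Delta)}=f^{\Gamma(\lambda_T)}$ directly.

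So your plan is correct, but the detour through a vertex-level link isomorphism (and the path-splitting analysis you flag as the ``principal technical burden'') is unnecessary; the shelling of the order ideal already delivers the $h=f$ identity you need.
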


\begin{proof}

Let $\Delta=\wtil{\Delta}^{NK}(\lambda)$ be the reduced nonkissing complex. Given a subset $I\subseteq [r]$, we let $\lk_I(\Delta)$ be the link of the face $\{q_i:\ i\in I\}$ of $\Delta$. The positive part $\lk_I(\Delta)_+$ is the subcomplex obtained by deleting all initial boundary paths from the link. Then $\Delta$ decomposes as

$$\Delta=\bigsqcup_{I\subseteq [r]}\{\{q_i:\ i\in I\}\cup F:\ F\in\lk_I(\Delta)_+\}.$$

Using this decomposition of $\Delta$, we may reduce the right-hand side of the above identity as follows:

\begin{align*}
x^r F\left(\frac{1}{x},\ \frac{1+y_1(x+1)}{x},\ \ldots,\ \frac{1+y_r(x+1)}{x}\right) &= \sum_{F\in\Delta}x^{r-|F|}\prod_{q_i\in F}(1+y_i(x+1))\\
&= \sum_{J\subseteq[r]}\sum_{F\in\lk_J(\Delta)_+}x^{r-|J|-|F|}\prod_{i\in J}(1+y_i(x+1))\\
&= \sum_{I\subseteq[r]}(x+1)^{|I|}\sum_{I\subseteq J\subseteq[r]}\sum_{F\in\lk_J(\Delta)_+}x^{r-|J|-|F|}\prod_{i\in I}y_i\\
&= \sum_{I\subseteq[r]}(x+1)^{|I|}f_I^{\pr}(x)\prod_{i\in I}y_i
\end{align*}

\noindent where $f_I^{\pr}(x)$ is defined as
$$f_I^{\pr}(x)=\sum_{I\subseteq J\subseteq[r]}\sum_{F\in\lk_J(\Delta)_+}x^{r-|J|-|F|}.$$

Using the decomposition
$$\lk_I(\Delta)=\bigsqcup_{I\subseteq J\subseteq[r]}\{\{q_j:\ j\in J\}\cup F:\ F\in\lk_J(\Delta)_+\},$$

\noindent we have

$$f_I^{\pr}(x)=\sum_{F\in\lk_I(\Delta)}x^{(r-|I|)-|F|},$$

\noindent so the polynomial $x^{r-|I|}f_I^{\pr}(1/x)$ is the $f$-polynomial of $\lk_I(\Delta)$.

Now expand the left-hand side:

\begin{align*}
H(x+1,\ y_1+1,\ \ldots,\ y_r+1) &= \sum_{F\in\Gamma}(x+1)^{|F|}\prod_{t_i\in\epsilon(F)}(1+y_i)\\
&= \sum_{I\subseteq[r]}\sum_{\substack{F\in\Gamma\\I\subseteq\epsilon(F)}}(x+1)^{|F|}\prod_{i\in I}y_i.
\end{align*}

In the above expression, we write $I\subseteq\epsilon(F)$ to mean that $\{t_i:\ i\in I\}$ is a subset of $\epsilon(F)$.

Now fix some $I\subseteq[r]$. Let $\lambda^{\pr}$ be the same shape as $\lambda$ where each of the vertices $v_i$ for $i\in I$ are treated as boundary vertices. The nonfriendly complex $\Gamma(\lambda^{\pr})$ consists of those collections of segments in $\Gamma(\lambda)$ for which no segment contains a vertex $v_i$ for some $i\in I$. We remark that a lazy segment $t_i$ and some other segment $s$ are nonfriendly if and only if $v_i$ is not in $s$. In particular, $\Gamma(\lambda^{\pr})$ is equal to the subcomplex of the nonfriendly complex $\Gamma(\lambda)$ consisting of faces $F$ disjoint from $I$ such that $F\cup I\in\Gamma$ and $I\subseteq\epsilon(F\cup I)$. In particular, we have

\begin{align*}
H(x+1,\ y_1+1,\ \ldots,\ y_r+1) &= \sum_{I\subseteq[r]}(x+1)^{|I|}h_I(x+1)\prod_{i\in I}y_i,
\end{align*}

\noindent where $h_I(x)$ is the $h$-polynomial of $\lk_I(\Delta)$. The now follows from Lemma~\ref{lem_shellable}.\end{proof}

Setting $y=y_1=\cdots=y_r$, we obtain the immediate corollary.

\begin{corollary}
$$H(x+1,y+1)= x^r F\left(\frac{1}{x},\ \frac{1+y(x+1)}{x}\right)$$
\end{corollary}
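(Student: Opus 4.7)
The plan is to specialize the multivariate identity of Theorem~\ref{thm_FH} to the diagonal $y_1 = y_2 = \cdots = y_r = y$. Under this substitution, the left-hand side
\[
H(x+1,\ y_1+1,\ \ldots,\ y_r+1)
\]
becomes $H(x+1,\ y+1,\ \ldots,\ y+1)$, while the right-hand side becomes
\[
x^r F\left(\frac{1}{x},\ \frac{1+y(x+1)}{x},\ \ldots,\ \frac{1+y(x+1)}{x}\right).
\]
What remains is to verify that these diagonal specializations of the multivariate $H$ and $F$ polynomials agree with the bivariate versions introduced in Section~\ref{sec_nonkissing}.

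For the $F$-side, by definition
\[
F(x,y_1,\ldots,y_r) \;=\; \sum_{F\in\wtil{\Delta}^{NK}}x^{|F\setm F_0|}\prod_{q_i\in F}y_i,
\]
so setting every $y_i$ equal to a common value $z$ gives $\sum_F x^{|F\setm F_0|}z^{|F\cap F_0|}$, which is exactly $F(x,z)$. Applying this with $z = (1+y(x+1))/x$ identifies the right-hand side with $x^r F(1/x,\,(1+y(x+1))/x)$.

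For the $H$-side, the multivariate definition
\[
H(x,y_1,\ldots,y_r)=\sum_{F\in\Gamma}x^{|F|}\prod_{t_i\in\epsilon(F)}y_i
\]
specializes, upon setting every $y_i$ equal to $z$, to $\sum_F x^{|F|}z^{|\epsilon(F)|} = H(x,z)$. Applying this with $z = y+1$ identifies the left-hand side with $H(x+1,y+1)$.

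There is no substantive obstacle here: the corollary is a purely formal consequence of the fact that the diagonal specialization $y_1=\cdots=y_r=y$ reduces the multivariate polynomials defined in Section~\ref{subsec_triangle_FH} to the bivariate $F$-triangle and $H$-triangle of Section~\ref{sec_nonkissing}. Once this compatibility is noted, plugging the common value into Equation~\eqref{thm_FH_eqn} yields the claimed identity immediately.
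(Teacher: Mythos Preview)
Your proposal is correct and matches the paper's own argument exactly: the paper derives the corollary simply by setting $y = y_1 = \cdots = y_r$ in Theorem~\ref{thm_FH}, and you have spelled out the (trivial) verification that this diagonal specialization recovers the bivariate $F$- and $H$-triangles.
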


\subsection{$F$-triangle and $M$-triangle}\label{subsec_FM}

Let $\Psi=\Psi^f(\lambda)$ be the shard intersection order for the shape $\lambda$. The $M$-triangle (introduced in Section~\ref{subsec_transitive}) is the polynomial

$$M(x,y)=\sum_{\substack{X,Y\in\Psi\\Y\leq X}}\mu(Y,X)x^{\rk X}y^{\rk Y}=\sum_{i,j=0}^rm_{ij}x^iy^j.$$

This polynomial is well-defined since $\Psi$ is graded by Proposition~\ref{prop_shard_int_graded_lattice}. We conjecture the following identity between the $M$-triangle and $F$-triangle.

\begin{conjecture}\label{conj_MF}
$$M(-x,-y/x) = (1-y)^rF\left(\frac{x+y}{1-y},\frac{y}{1-y}\right)$$
\end{conjecture}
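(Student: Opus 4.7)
The plan is to mimic Athanasiadis's proof \cite{athanasiadis:2007some} of Chapoton's $F=M$ identity in the Coxeter-Catalan setting, transported through the lattice-theoretic and geometric framework developed in Sections~\ref{sec_tamari} and \ref{sec_fan}. Expanding the left-hand side by M\"obius inversion yields
$$M(-x,-y/x)=\sum_{X\in\Psi^w(\lambda)}(-x)^{\rk X}\,\chi_X(-y/x),\qquad \chi_X(t):=\sum_{Y\leq X}\mu(Y,X)\,t^{\rk Y},$$
so the principal task is to obtain a tractable expression for each local polynomial $\chi_X(t)$. Theorem~\ref{thm_shard_intersection_order} lets one view $X$ as a cone in the Grid-associahedron fan $\Fcal_\lambda$ and the interval $[\hat{0},X]$ in $\Psi^w(\lambda)$ as the subfan of shards meeting this cone transversally; this geometric perspective is the foundation of the strategy.

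The next step is to decompose $[\hat{0},X]$ locally. If $T$ is the wide set representing $X$ and $\NF(T)=\{s_1,\ldots,s_k\}$, then Proposition~\ref{prop_faces_GT} combined with Theorem~\ref{thm_lattice_shard_intersection} should identify $[\hat{0},X]$ with a shard intersection order built on the restricted segment set $\ov{\{s_1,\ldots,s_k\}}$, viewed as an abstract shape of its own. An inductive computation of $\chi_X(t)$ using this local decomposition together with Lemma~\ref{lem_shellable} (applied to the restricted shape) should then yield a closed-form expression for $\chi_X(t)$ in terms of face numbers of the associated local nonfriendly subcomplex, producing the factorization needed to recognise the right-hand side.

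To match with the right-hand side I would expand
$$(1-y)^rF\!\left(\tfrac{x+y}{1-y},\tfrac{y}{1-y}\right)=\sum_{G\in\wtil{\Delta}^{NK}(\lambda)}(1-y)^{r-|G|}(x+y)^{|G\setm F_0|}y^{|G\cap F_0|},$$
then expand the binomial $(x+y)^{|G\setm F_0|}$ to obtain a double sum over pairs $(G,G')$ with $G'\subseteq G\setm F_0$. Grouping these pairs via the bijection of Theorem~\ref{thm_NF_CJ} and Corollary~\ref{cor_lower_cover} between descent sets in $\GT(\lambda)$ and faces of $\Gamma^{NF}(\lambda)$, and regrouping the $(1-y)^{r-|G|}$ contributions as alternating face counts over links of $\wtil{\Delta}^{NK}(\lambda)$, should rewrite the right-hand side in the form predicted by the left-hand side once the factorization of $\chi_X$ is in hand. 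A direct substitution into the $2\times 3$ data $F(x,y)=1+3x+2y+2x^2+2xy+y^2$ and $M(x,y)=1-3x+2x^2+3xy-3x^2y+x^2y^2$ computed in Section~\ref{sec_nonkissing} confirms the identity in that case and provides a concrete sanity check.

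The principal obstacle is the closed-form computation of $\chi_X(t)$. In the Coxeter setting this factorization follows from an EL-shelling of the noncrossing partition lattice together with Bj\"orner's theorem expressing the characteristic polynomial via descending chains. For $\Psi^w(\lambda)$ no EL- or CL-shelling is currently known; constructing one---for example, by labelling a cover $Y\lessdot Y'$ with the minimal new segment appearing in $\NF(Y')\setm\NF(Y)$ and verifying the shelling axioms directly from the wide-set conditions of Section~\ref{subsec_transitive}---appears to be the technical heart of the argument and the step on which this plan hinges.
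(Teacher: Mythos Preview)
The statement you are attempting to prove is stated in the paper as a \emph{conjecture}, not a theorem; the paper gives no proof and only reports computer verification for the $3\times 4$ rectangle and its subshapes. There is therefore no argument in the paper to compare your proposal against.

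Your proposal is, by your own description, a plan rather than a proof, and you have correctly located the genuine gap: the closed-form computation of the local polynomials $\chi_X(t)$. In the Coxeter setting Athanasiadis's argument rests on structural results for the noncrossing partition lattice (in particular an EL-shelling and the resulting M\"obius function formula) that have no known analogue for $\Psi^w(\lambda)$. Your proposed edge-labelling by ``the minimal new segment appearing in $\NF(Y')\setminus\NF(Y)$'' is not obviously well-defined---a cover in $\Psi^w(\lambda)$ increases the rank by one, but $\NF(Y')$ and $\NF(Y)$ can differ by more than a single segment, since passing to a larger wide set may both add and remove elements of the nonfriendly generating set. Even granting a well-defined labelling, verifying the EL axioms from the combinatorics of wide sets is a substantial project with no roadmap in the paper.

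A second, smaller gap: the identification of $[\hat 0,X]$ in $\Psi^w(\lambda)$ with a shard intersection order on the restricted segment set $\ov{\{s_1,\ldots,s_k\}}$ does not follow directly from Proposition~\ref{prop_faces_GT} and Theorem~\ref{thm_lattice_shard_intersection}. Those results concern intervals in $\Bic(\lambda)$ and in $\GT(\lambda)$, not lower intervals in $\Psi^w(\lambda)$; the passage from one to the other would itself need to be established. In short, the outline is a reasonable heuristic transported from the Coxeter case, but it does not constitute a proof, and the paper does not claim one.
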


This conjecture has been verified using Sage \cite{sage} for the $3\times 4$ rectangle shape, along with its subgraphs.

\begin{example}

Let $\lambda$ be a $3\times 4$ rectangle shape. We represent the $F$-triangle, $H$-triangle, and $M$-triangle for this shape by three $7\times 7$ lower-triangular matrices as follows.

If $f_{ij}$ is the coefficient of $x^{i-j}y^j$ in $F(x,y)$, then

$$(f_{ij}) = \left(\begin{matrix}1 & 0 & 0 & 0 & 0 & 0 & 0\\22 & 6 & 0 & 0 & 0 & 0 & 0\\141 & 82 & 15 & 0 & 0 & 0 & 0\\395 & 344 & 123 & 20 & 0 & 0 & 0\\548 & 620 & 319 & 94 & 15 & 0 & 0\\371 & 506 & 332 & 134 & 37 & 6 & 0\\98 & 154 & 121 & 60 & 22 & 6 & 1\end{matrix}\right).$$

Letting $h_{ij}$ and $m_{ij}$ be the coefficients of $x^iy^j$ in $H(x,y)$ and $M(x,y)$, respectively, we have

$$(h_{ij}) = \left(\begin{matrix}1 & 0 & 0 & 0 & 0 & 0 & 0\\16 & 6 & 0 & 0 & 0 & 0 & 0\\46 & 52 & 15 & 0 & 0 & 0 & 0\\31 & 76 & 63 & 20 & 0 & 0 & 0\\4 & 20 & 40 & 34 & 15 & 0 & 0\\0 & 0 & 3 & 6 & 7 & 6 & 0\\0 & 0 & 0 & 0 & 0 & 0 & 1\end{matrix}\right),\ \mbox{and}$$

$$(m_{ij}) = \left(\begin{matrix}1 & 0 & 0 & 0 & 0 & 0 & 0\\-22 & 22 & 0 & 0 & 0 & 0 & 0\\141 & -254 & 113 & 0 & 0 & 0 & 0\\-395 & 965 & -760 & 190 & 0 & 0 & 0\\548 & -1627 & 1726 & -760 & 113 & 0 & 0\\-371 & 1265 & -1627 & 965 & -254 & 22 & 0\\98 & -371 & 548 & -395 & 141 & -22 & 1\end{matrix}\right).$$

It is routine to check that these three triangles satisfy the identities in Theorem~\ref{thm_FH} and Conjecture~\ref{conj_MF}.

\end{example}

\section*{Acknowledgements}

We thank Christophe Hohlweg for suggesting Conjecture~\ref{conj_grid_associahedron}. Thomas McConville thanks Alex Postnikov for suggesting references on flow polytopes. Alexander Garver received support from NSERC and the Canada Research Chairs program.

\bibliography{bib_tamari_v2}{}
\bibliographystyle{plain}

\end{document}